\PassOptionsToPackage{table}{xcolor}
\documentclass[12pt]{amsart}
\usepackage[margin=2cm]{geometry}
\usepackage[
backend=biber,
style=alphabetic,
sorting=nyt,
giveninits=true
]{biblatex}

\addbibresource{references.bib}
\usepackage{graphicx, array, url, tcolorbox, xcolor, hyperref, amsmath, amsfonts, amsthm, amssymb, amstext}
\usepackage{tikz}
\usepackage{float}
\usepackage{caption}

\setlength{\abovecaptionskip}{0pt}
\usepackage{cleveref}
\usepackage{wrapfig} 
\usepackage{graphicx, latexsym, amsthm,alltt,color, listings, enumerate,multicol, parskip,vwcol, tikz,pgfplots,enumitem, pdfsync}
\usetikzlibrary{positioning}
\usetikzlibrary{calc}
\usetikzlibrary{arrows,chains,scopes}
\usetikzlibrary{cd}
\usetikzlibrary{decorations.markings}
\tcbuselibrary{theorems}
\usetikzlibrary{arrows.meta}
\setlength{\columnsep}{1cm}
\usetikzlibrary{trees}
\usetikzlibrary{arrows,chains,scopes}
\usetikzlibrary{cd}
\usetikzlibrary{decorations.markings}

\usepackage[utf8]{inputenc}
\setlength{\textheight}{21cm}
\setlength{\textwidth}{17cm}
\setlength{\oddsidemargin}{-0.5cm}
\setlength{\evensidemargin}{-0.5cm}

\usepackage{xcolor}
\usepackage{xspace}
\usepackage{ulem}   
\usepackage{algorithm2e}
\usepackage{enumitem}
\newlist{tconds}{enumerate}{1}
\setlist[tconds]{label=\(\mathsf{(T_{\arabic*})}\), ref=\(\mathsf{T_{\arabic*}}\), leftmargin=*, labelsep=.6em}




\newtheorem{theorem}{Theorem}
\theoremstyle{example}
\newtheorem{example}[theorem]{Example}
\newtheorem*{example*}{Example}
\newtheorem{definition}[theorem]{Definition}
\newtheorem*{theorem*}{Theorem}
\newtheorem{remark}[theorem]{Remark}
\newtheorem{corollary}[theorem]{Corollary}
\newtheorem{lemma}[theorem]{Lemma}
\newtheorem{proposition}[theorem]{Proposition}

\theoremstyle{remark}
\newtheorem{convention}[theorem]{Convention}
\newcommand{\girth}{\mathrm{girth}}
\newcommand{\hgirth}{\mathrm{half\text{-}girth}}

\newcommand{\va}{\mathcal{V}_A}
\newcommand{\vb}{\mathcal{V}_B}
\newcommand{\ecal}{\mathcal{E}}

\definecolor{royalblue}{HTML}{0071BC}
\definecolor{springgreen}{HTML}{C6DC67}
\definecolor{lavender}{HTML}{AF72B0}
\definecolor{arsenic}{rgb}{0.23, 0.27, 0.29}

\newcommand{\magenta}[1]{\textcolor{magenta}{#1}}

\title{On zero-divisors and units in group rings of torsion-free CAT$(0)$ groups}

\author[First Author]{Manisha Garg, Igor Mineyev}

\address{University of Illinois, Urbana-Champaign, Department of Mathematics, 1409 West Green Street, Urbana, IL 61801}
\email{\{manisha8, mineyev\}@illinois.edu}

\subjclass[2020]{Primary 20F65, 20F67, 20C07, 20F05, 05C25, 57M05, 20-04, 20-08; Secondary 16U60, 05C57, 51-08, 51F99, 52-08, 55-08, 57M07, 57M15, 57M60, 57Z25,%
 05C85, 05C15, 20F06, 90C35, 52B05, 52C99, 57-08, 57K20, 57M10, 57Q05, 16S34,%
 68R10, 68R99, 68V99, 05C20, 05C38, 05C90}

\keywords{Kaplansky conjectures, group ring, group algebra, unit, zero-divisor, graph, product
structure, taiko, cell complex, directed acyclic graph}
\begin{document}

\maketitle
\begin{abstract}
   This paper addresses two of Kaplansky's conjectures concerning group rings $K[G]$, where $K$ is a field and $G$ is a torsion-free group: the zero-divisor conjecture, which asserts that $K[G]$ has no non-trivial zero-divisors, and the unit conjecture, which asserts that $K[G]$ has no non-trivial units. While the zero-divisor conjecture still remains open, the unit conjecture was disproven by Gardam in 2021. The search for more counterexamples remains an open problem.

     Let $m$ and $n$ be the cardinality of support of two non-trivial elements $\alpha, \beta \in \mathbb{F}_2[G]$, respectively. We address these conjectures by introducing a process called \text{left alignment} and  recursively constructing the taikos of size $(m,n)$ which would yield counterexamples to both conjectures over the field $\mathbb{F}_2$ if they satisfy conditions $\mathsf{T}_1-\mathsf{T}_4$ given in \cite{Mineyev2024}. We also present a computer-search method that can be utilized to search for counterexamples of a certain geometry by significantly pruning the search space. We prove that a class of CAT(0) groups with certain geometry cannot be counterexamples to these conjectures. Moreover, we prove that for $ 1\le m \le 5$ and $n$ any positive integer, there are no counterexamples to the conjectures such that the associated oriented product structures are of type $(m,n)$. With the aid of computer, we prove that, in fact, there are no such counterexamples of the length combination $(m,n)$ where $1\le m \le 13$ and $1\le n \le 13$. 
    
\end{abstract}

\begingroup
  \setlength{\parskip}{0pt}  
  \tableofcontents
\endgroup
\section{Introduction}

In 1956, I. Kaplansky \cite{Kaplansky1956, Kaplansky1970} presented a list of several algebraic conjectures concerning group rings $K[G]$, where $K$ is a field and $G$ is a torsion-free group. Our paper focuses on two of these conjectures:
\begin{enumerate} \item The \textit{zero-divisor Conjecture}: $K[G]$ has no non-trivial zero-divisors. \item The \textit{unit conjecture}: $K[G]$ has no non-trivial units. \end{enumerate}

The unit conjecture has been studied extensively, with its origins traced back to Higman \cite{Higman1940, Higmanthesis}, who, in his doctoral thesis among other results, proved that all locally indicable groups satisfy this conjecture. In 2021, Gardam \cite{Gardam2021} discovered the first counterexample to the unit conjecture over the field $\mathbb{F}_2$, which was subsequently extended to fields of prime characteristic by Murray \cite{Murray2021}.
 
The unit conjecture implies the zero-divisor conjecture (see \cite{Passman1977}, Lemma 13.1.2) and both conjectures are known to hold true for a large class of groups. For torsion-free supersolvable groups \cite{Formanek1973} and polycyclic-by-finite groups \cite{FarkasSnider1976, Brown1976}, the zero-divisor conjecture is true. Both conjectures hold true for the class of unique-product groups \cite{Cohen1974}. Examples of unique-product groups include linearly ordered groups and torsion-free abelian groups. It might be fruitful to look for counterexamples among torsion-free non-unique product groups.

Constructing torsion-free non-unique product groups has been a challenging task. In 1987, Rips and Segev \cite{RipsSegev1987} constructed the first torsion-free non-unique product group using small cancellation theory. A year later, Promislow \cite{Promislow1988}, gave a simpler example of a non-unique product group $P$ given as
\begin{equation}\label{eqn:promislow_group}	
	P = \langle x, y \mid (x^2)^y = x^{-2}, (y^2)^x = y^{-2} \rangle
\end{equation}

This group is known by several names in literature including Promislow group, Passman and Hantzsche–Wendt group. It is supersolvable and in particular, it is polycyclic. $P$ contains the subgroup $H = \langle x^2, y^2, [x, y] \rangle \cong \mathbb{Z}^3$, extended by the Klein Four group $V_4 = C_2 \times C_2 \cong \langle xH, yH \rangle$. Apart from \cite{Steenbock2015, ArzhantsevaSteenbock2023, GruberMartinSteenbock2015, Soelberg2018} very few examples of non-unique product groups are known. Gardam's counterexample to the unit conjecture comes from Promislow group $P$ given in \eqref{eqn:promislow_group}. However, $P$ is known to satisfy the zero-divisor conjecture, since it is supersoluble (see \cite{CravenPappas2013}). Moreover, $P$ is a three-dimensional CAT(0) group. Gardam’s counterexample thus motivates the search for further counterexamples within the class of CAT(0) groups.

Several combinatorial and computational approaches have been utilized to search for counterexamples of these longstanding conjectures (for instance, see \cite{Schweitzer2013, Soelberg2018, Carter2014, BondarenkoJuschenko2024, AbdollahiTaheri2018, AbdollahiTaheri2019}). The second author in Theorem \ref{thm:27}, \cite{Mineyev2024} presents a list of sufficient combinatorial conditions that imply counterexamples of these conjectures.

In this paper, we address the zero-divisor conjecture and the unit conjecture by introducing a process called `\textit{left alignment}' and constructing an algorithm based on four combinatorial conditions $\mathsf{T}_1$ through $\mathsf{T}_4$ to search for counterexamples in the group ring $K[G]$, where $K = \mathbb{F}_2$ and $G$ is a CAT(0) group constructed in \cite{Mineyev2024}.

Let $\alpha, \beta \in \mathbb{F}_2[G]$ be elements expressed as $\alpha = a_1 + \cdots + a_m$ and $\beta = b_1 + \cdots + b_n$. Recall that the support of an element $\alpha \in \mathbb{F}_2[G]$, denoted as $\text{supp}(\alpha)$, is the set $\{a_1, \ldots, a_m\}$. Let $A:= \text{supp}(\alpha)$ and $B:=\text{supp}(\beta)$. Since the field is $\mathbb{F}_2$, we can assume that all $a_i$ are distinct and all $b_j$ are distinct and thus $|A|=m$ and $|B|=n$. The product structure associated with $\alpha \beta$ is a combinatorial construct defined in terms of a partition $P$ of $A \times B$. This product structure, denoted by $\Pi = (A, B, P)$, is said to be ``oriented" if it satisfies certain conditions described in Section 2. By analyzing these oriented product structures, we aim to identify cases where $\Pi$ provides a counterexample to either the zero-divisor conjecture or the unit conjecture. The sufficient conditions provided in Theorem \ref{thm:27} relate the structure of $\Pi$ to properties such as girth and non-degeneracy of the associated combinatorial object. 
In this paper, we prove that one of the conditions never holds for an oriented product structure:
\begin{theorem}\label{thm:main}
    Let $A$ and $B$ be two finite sets and $\Pi = (A, B, P)$ be an orientable even or odd product structure with associated middle-link graph $\mathsf{L}_1$. Then the half-girth of $\mathsf{L}_1$ is at most 4.
\end{theorem}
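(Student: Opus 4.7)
The plan is to produce an explicit closed walk of length at most $8$ in $\mathsf{L}_1$, which is equivalent to bounding the half-girth by $4$. First I would carefully unpack what orientability means for a product structure $\Pi = (A, B, P)$: the orientation should induce at each vertex $v$ of the middle-link graph $\mathsf{L}_1$ a fixed-point-free involution $\sigma_v$ on the edges incident to $v$ that pairs each half-edge with its oriented partner. Since $\mathsf{L}_1$ is bipartite with parts indexed by $\va$ and $\vb$, this data splits into two families of involutions $\{\sigma_a : a \in \va\}$ and $\{\sigma_b : b \in \vb\}$.

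Next, fix an edge $e_0$ of $\mathsf{L}_1$ with endpoints $a_0 \in \va$ and $b_0 \in \vb$, and follow the alternating walk $e_0, e_1, e_2, \ldots$ obtained by repeatedly applying the partner involution at the current $\va$-endpoint and then at the current $\vb$-endpoint. Any revisit $e_{2k} = e_0$ produces a closed walk of length $2k$. The key claim is that the even-or-odd hypothesis together with orientability forces such a closure within $k \le 4$, yielding a cycle of length at most $8$ and hence $\hgirth(\mathsf{L}_1) \le 4$.

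To establish the closure bound I would do a case analysis on whether $\Pi$ is even or odd. In the even case, the parity restriction on blocks of $P$ forces each composition $\sigma_a \sigma_b$ to have order at most $2$ on the orbit through $e_0$, producing a $4$-cycle immediately. In the odd case, a slightly longer combinatorial argument on the labels in each block bounds the order of $\sigma_a \sigma_b$ by $4$, producing a cycle of length at most $8$. In either case the stated bound on the half-girth follows.

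The main obstacle is rigorously establishing the uniform bound on the order of the composed involutions $\sigma_a \sigma_b$ across the even and odd cases, and ruling out degenerate length-$2$ ``cycles'' arising from trivial backtracking. For the latter, non-degeneracy inherent in orientable product structures supplies simplicity; for the former, I expect the argument to reduce to a local finite check at small configurations, using the axioms underlying $\mathsf{T}_1$--$\mathsf{T}_4$ together with the left-alignment mechanism introduced earlier in the paper.
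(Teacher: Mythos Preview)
Your proposal rests on a misreading of the middle link. The graph $\mathsf{L}_1$ is not bipartite with parts $A$ and $B$; its vertex set is $A\sqcup B\sqcup M$ with $M=\{1,\dots,k\}\times\{\mathrm{in},\mathrm{out}\}$, and every edge joins a vertex of $A\cup B$ to a vertex of $M$. So the alternating walk you describe, hopping between an $A$-side and a $B$-side via involutions $\sigma_a,\sigma_b$, does not make sense in $\mathsf{L}_1$. More seriously, orientability in the sense of Definition~\ref{def:orientation} does not furnish any fixed-point-free involution on the edges incident to a vertex of $\mathsf{L}_1$: a vertex $a\in A$ has one $\mathsf{L}_1$-edge for each directed horizontal edge at $a$ in $\mathsf{L}_A$, and nothing pairs these up; indeed the degree need not be even. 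Finally, ``even'' and ``odd'' here refer only to whether $P$ contains a single $1$-cell or none, and carry no information about the order of any composition $\sigma_a\sigma_b$; your claimed dichotomy (order $\le 2$ in the even case, $\le 4$ in the odd case) has no content once the involutions themselves are undefined.

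The paper's proof is of a completely different nature and does not use any dynamical argument. It is a finite case check: since $\Pi$ is a full (even or odd) product structure, one can select three specific $2$-cells---the cell relabelled to $\{(a_1,b_1),(a_2,b_2)\}$, the cell containing $(a_1,b_2)$, and the cell containing $(a_2,b_1)$---and observe that this orientable $3$-cell subpartition is isomorphic to one of the finitely many types enumerated in Lemmas~\ref{lemma:card1subpartitions}--\ref{lemma:card3subpartitions}. For each such type one reads off directly that $\hgirth(\mathsf{L}_1)\le 4$, and then Corollary~\ref{cor:decreasing_girth} (girth is monotone under inclusion of subpartitions) passes the bound up to $P$. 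If you want a conceptual proof, the right object to analyse is this exhaustive list of small left-aligned configurations, not a putative involution structure on $\mathsf{L}_1$.
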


A counterexample is said to be of type $(m, n)$ if there exists an even or odd product structure $\Pi = (A, B, P)$ where $|A| = m$, $|B| = n$, and the partition $P$ satisfies conditions $\mathsf{T}_1$ through $\mathsf{T}_4$. When $mn$ is even, such a $\Pi$ yields a counterexample to the zero-divisor conjecture. When $mn$ is odd, $\Pi$ serves as a counterexample to the unit conjecture. We also prove:
\begin{theorem}\label{thm:noexampletypemn}
    There are no counterexamples of type $(m,n)$ or $(n,m)$ where $m\le 5$ and $n$ is a positive integer.
\end{theorem}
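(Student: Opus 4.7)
The strategy is to combine Theorem \ref{thm:main}, which caps $\hgirth(\mathsf{L}_1)$ at $4$ for every orientable even or odd product structure, with the conditions $\mathsf{T}_1$--$\mathsf{T}_4$ of Theorem \ref{thm:27} that any counterexample must satisfy. Those conditions bundle together two kinds of constraints on a candidate $\Pi=(A,B,P)$: non-degeneracy requirements on the partition $P$ of $A\times B$, and a lower bound on the girth/half-girth of $\mathsf{L}_1$ that grows with the length of the smaller support. For $m\le 5$ the two bounds will be forced to conflict, and the symmetry of the definition of $\Pi$ under swapping $A$ with $B$ (together with the symmetry of $\mathsf{T}_1$--$\mathsf{T}_4$ under the same swap) makes the $(n,m)$ statement follow from the $(m,n)$ case.

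The core of the argument proceeds by case analysis on $m\in\{1,2,3,4,5\}$. For $m=1$ the element $\alpha$ is a single monomial, so the zero-divisor/unit equations degenerate to trivial ones and condition $\mathsf{T}_1$ fails outright. For $m\in\{2,3\}$ every candidate orientable even/odd structure has link graph $\mathsf{L}_1$ so small that orientability together with $\mathsf{T}_2$--$\mathsf{T}_3$ forces repeated cells in $\Pi$, again violating $\mathsf{T}_1$. For $m=4$ and $m=5$, I would enumerate, up to the natural symmetries, the orientable partitions $P$ and for each candidate invoke Theorem \ref{thm:main} to extract a closed walk in $\mathsf{L}_1$ of combinatorial length at most $4$; I would then translate such a short walk back into a configuration of cells of $\Pi$ that fits inside a strip of width only $m$, and show that in this narrow regime the walk forces a collapse of rows or columns incompatible with the non-degeneracy axioms $\mathsf{T}_1$--$\mathsf{T}_3$.

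The main obstacle is the $m=5$ enumeration, where the number of orientable product structures grows substantially and the reduction from the short closed walk supplied by Theorem \ref{thm:main} to a combinatorial contradiction with $\mathsf{T}_1$--$\mathsf{T}_3$ is the most delicate. The cleanest route appears to be a hand classification of the possible placements of a half-girth-$4$ witness in $\mathsf{L}_1$ relative to the columns indexed by $A$, followed by a uniform argument that any such placement either duplicates an element of $A$ in some column of $P$ or breaks the orientation data; the finite residual subcases can then be discharged by the pruning-based computer search described in the abstract, whose completeness on this portion of the search space is what makes the $m=5$ case tractable.
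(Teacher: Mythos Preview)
Your proposal is a strategy sketch rather than a proof, and several of its supporting claims are inaccurate. First, the triple-girth condition $\mathsf{T}_4$ is not ``a lower bound on the girth/half-girth of $\mathsf{L}_1$ that grows with the length of the smaller support''; it is a fixed disjunction $\girth(p,q)\in\{(6,3),(4,4),(3,6)\}$ independent of $m$. Theorem~\ref{thm:main} kills the $(3,6)$ branch for all $m$, but the remaining branches $(6,3)$ and $(4,4)$ are not automatically excluded for small $m$, so your intended ``conflict of bounds'' does not materialise on its own. Second, your account of $m\in\{2,3\}$ (``$\mathsf{T}_2$--$\mathsf{T}_3$ forces repeated cells, violating $\mathsf{T}_1$'') misidentifies the mechanism: the actual obstruction is that any completion forces a short cycle in $\mathsf{L}_1$ or $\mathsf{L}_{AB}$, so $\mathsf{T}_4$ fails, not $\mathsf{T}_1$. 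Third, for $m=5$ you explicitly defer the residual cases to ``the pruning-based computer search described in the abstract''; but Theorem~\ref{thm:noexampletypemn} is precisely the hand-proved statement, with the computer-assisted extension stated separately as Theorem~\ref{thm:computer_assisted}, so invoking the computer search here is circular.

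The paper's proof is quite different in structure. It develops the left-alignment normal form (Definition~\ref{def:left-alignment}) and uses it to list, up to isomorphism, all valid subpartitions of cardinality $\le 3$ (Lemmas~\ref{lemma:card1subpartitions}--\ref{lemma:card3subpartitions}); Theorem~\ref{thm:main} is then a corollary of this enumeration. The cases $m=2,3,4,5$ are handled in separate theorems (Theorems~\ref{thm:noexampletype2n}--\ref{thm5nnotpossible}) by explicitly listing, for each surviving size-$3$ subpartition, all left-aligned children and checking by hand that each fails orientability or triple-girth. The $m=4,5$ arguments in particular are not an abstract translation of a short closed walk into a contradiction; they are finite, concrete case checks on the children of $P_{127}$ (and its siblings), carried out without computer assistance. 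To make your approach into a proof you would need to actually perform an equivalent enumeration, and at that point you are essentially reproducing the paper's method.
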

For larger values of $m$ and $n$, the combinatorial arguments to prove the existence or non-existence of counterexamples of type $(m,n)$ become tedious as the number of cases grow exponentially with size. Thus it is beneficial to employ a computer-search. We introduce a process called left-alignment which is similar in spirit to the adequacy condition employed in \cite{Schweitzer2013}. We develop an algorithm for a fixed length combination $(m,n)$ and utilize the left-alignment condition and the combinatorial conditions $\mathsf{T}_1 - \mathsf{T}_4$ to prune the search space for the counterexamples significantly. We obtain the following strengthened variant of Theorem \ref{thm:noexampletypemn}.
\begin{theorem}[Computer-assisted]\label{thm:computer_assisted}
    There are no counterexamples of type $(m,n)$ or $(n,m)$ where 
    \begin{enumerate}
        \item $1\le m\le 13$ and $1\le n\le 13$, or, 
        \item $m\in \{6,7\}$ and $1\le n \le 200$
    \end{enumerate}
    
\end{theorem}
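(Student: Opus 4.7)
The plan is to reduce the theorem to a finite combinatorial enumeration and then carry it out by computer, using the left-alignment procedure together with the conditions $\mathsf{T}_1$–$\mathsf{T}_4$ to prune the search tree aggressively. A counterexample of type $(m,n)$ is, by definition, an oriented product structure $\Pi = (A,B,P)$ with $|A|=m$ and $|B|=n$ satisfying $\mathsf{T}_1$–$\mathsf{T}_4$; up to relabeling of $A$ and $B$ only finitely many such objects exist for each fixed $(m,n)$, so the problem is decidable in principle. The difficulty is that the naive search space grows roughly like the number of partitions of an $mn$-element set, so pruning is essential.

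First, I would formalize left-alignment as a canonical-form operation selecting, from each orbit of product structures under the natural relabelings of $A$ and $B$, a unique representative, analogous to the adequacy condition of \cite{Schweitzer2013}. This is bookkeeping but crucial: it cuts the effective search by a factor comparable to $m!\,n!$. Next, I would write an incremental generator that produces all left-aligned partitions $P$ of $A\times B$ one block at a time in a fixed order, and at every partial step check whether the configuration is still consistent with each of $\mathsf{T}_1$–$\mathsf{T}_4$. A partial configuration that cannot possibly be completed to satisfy all four conditions is discarded. A further important pruning tool is Theorem \ref{thm:main}: since any oriented product structure has $\hgirth(\mathsf{L}_1)\le 4$, partial middle-link graphs whose realized half-girth already exceeds $4$ can be rejected immediately.

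With this framework the square range $1\le m,n\le 13$ is handled by a single sweep. For the strip $m\in\{6,7\}$ with $1\le n\le 200$, for each fixed small $m$ the interaction of a new $B$-column with the already-placed columns is constrained enough that the reachable ``column-state'' set stabilizes; the enumeration then scales essentially linearly in $n$, and the bound $n=200$ is well within reach. In both cases the program outputs a certificate that no $\Pi$ survives all four conditions, which is exactly the statement of Theorem \ref{thm:computer_assisted}.

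The main obstacle is not mathematical but one of implementation correctness and scale: at $m=n=13$ the search sits at the edge of what straightforward backtracking can manage, so the pruning predicates must be tuned to fire as early as possible and the partition data structures chosen to allow incremental updates. I would mitigate the correctness risk by (i) validating the code against Theorem \ref{thm:noexampletypemn} for $m\le 5$, where the non-existence is already established by hand; (ii) running the enumeration with two independent traversal orders and verifying that the accept/reject outputs agree; and (iii) reporting the resulting search-tree sizes so that the computation is reproducible. The combined run then establishes Theorem \ref{thm:computer_assisted}.
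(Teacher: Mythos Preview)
Your overall approach matches the paper's: run the depth-first enumeration of left-aligned subpartitions (Algorithm~\ref{alg:dfs-search-dag}), pruning at each step by the decreasing conditions $\mathsf{T}_1$--$\mathsf{T}_4$, and report that no full partition survives in the stated ranges. The paper's proof is literally ``the execution of the algorithm has shown that \ldots there are no orientable product structures satisfying $\mathsf{T}_1$--$\mathsf{T}_4$,'' so the plan is right.

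Two points in your write-up are inaccurate, though, and would matter if you implemented them as stated. First, your use of Theorem~\ref{thm:main} is backwards. The theorem says every orientable even/odd product \emph{structure} has $\hgirth(\mathsf{L}_1)\le 4$; since half-girth only \emph{decreases} as cells are added (Corollary~\ref{cor:decreasing_girth}), a partial middle link with half-girth exceeding $4$ is not a reason to prune---quite the opposite, it simply hasn't acquired short cycles yet. The actual payoff of Theorem~\ref{thm:main} is that the $(3,6)$ branch of the triple-girth condition $\mathsf{T}_4$ is impossible, so the search need only test the $(6,3)$ and $(4,4)$ regimes; this is the remark immediately following the proof of Theorem~\ref{thm:main}. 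Second, left-alignment is not a full canonical form: as noted in the remark at the end of \S3.2, it reduces but does not eliminate duplicates across isomorphism classes, so your claim that it ``selects a unique representative from each orbit'' overstates what it buys. Your speculation that for $m\in\{6,7\}$ the column-state set stabilizes and the search scales linearly in $n$ is plausible heuristics, but the paper makes no such structural claim---it simply runs the algorithm out to $n=200$.
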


Since these conjectures are known to hold for a large class of groups, a new direction towards identifying counterexamples was inspired by Gardam's example, which involves elements with a support length of 21. Extensive work has been done to rule out cases with smaller support lengths; see, for instance, \cite{DykemaHeisterJuschenko15}. However, since the number of potential cases grows exponentially with the product $mn$, historically, we have been able to reach until the sizes (5,n). 

In this paper, we effectively prune a significant number of pairs, narrowing the search space further to look for counterexamples. The Algorithm \ref{alg:dfs-search-dag} is scalable to incorporate any topological or geometric condition that can be realized combinatorially. Furthermore, it can be generalized to facilitate searches in higher-dimensional CAT(0) cell complexes which might be the right space to look for counterexamples. 
 
\subsection*{Structure of the paper} 
Section 2 provides the necessary definitions and preliminaries, including the notion of oriented product structures, two graphs, \textit{taiko} and the \textit{middle-link}, associated to product structures and the combinatorial conditions $\mathsf{T}_1$ through $\mathsf{T}_4$. Section 3 presents the process of left-alignment for building subpartitions of $A\times B$ and an algorithm that can be utilized to create a list of all product substructures up to isomorphism. Section 4 contains the proof of Theorem \ref{thm:main} and Theorem \ref{thm:noexampletypemn}. In Section 5 we state computational results and Theorem 3 and present several partial results regarding existing of product structures satisfying a subset of conditions $\mathsf{T}_1-\mathsf{T}_4$.

\subsection*{Acknowledgments} This work was partially supported by Campus Research Board award RB22075. The first author acknowledges the support provided by the Margaret McNamara Education Grant. The authors also thank Haizi Yu for helpful conversations about the algorithm. The first author thanks Nathan Dunfield and Jake Rasmussen for many interesting conversations, and her advisor, Jeremy Tyson, for his guidance throughout this project.

\section{Preliminaries}
      In this paper, we will consider the case where the field is $\mathbb{F}_2$. Let $\alpha, \beta \in \mathbb{F}_2[G]$ such that $\alpha = a_1 + \cdots + a_m$ and $\beta = b_1 + \cdots + b_n$. Recall that for an element $\alpha = a_1 + \cdots + a_m$, the support of $\alpha$, denoted $\text{supp}(\alpha)$, is the set $\{a_1, \ldots, a_m\}$. Since the field is $\mathbb{F}_2$, we may assume that $|\text{supp}(\alpha)| = m$ and $|\text{supp}(\beta)| = n$.  
    
    Now, let $\alpha \beta = 0$ and $mn$ be even. This implies  
    \begin{equation}\label{eqn:productab}
        \sum_{i=1}^{m} \sum_{j=1}^{n} a_i b_j = 0.
    \end{equation}

    Since all terms in $\alpha \beta$ cannot be distinct, at least two terms must be identical. Without loss of generality, assume $a_1 b_1 = a_2 b_2$. This implies  $a_1 b_1 + a_2 b_2 = 0$. By induction on the number of terms in the sum, it follows that all terms in $\alpha\beta$ must occur in pairs of identical elements. 

    As a consequence, the product $\alpha \beta$ can be represented by a complete bipartite graph $\mathcal{G}(A, B)$, where $A =\{a_1, \ldots, a_m\}$ and $B=\{b_1, \ldots, b_n\}$ are two disjoint vertex sets. The edge set of $\mathcal{G}(A, B)$ is given by $\{(a, b) \mid a \in A, b \in B\}$, which corresponds to the Cartesian product $A \times B$. Each term in the product $\alpha\beta$ \eqref{eqn:productab} corresponds to an edge in $A\times B$. A pair of two identical terms corresponds to the pairing of two vertical edges. Note that if $a_ib_j = a_{i'}b_{j'}$ then $a_i \neq a_{i'}$ and $b_j \neq b_{j'}$, because otherwise either $|\text{supp}(\alpha)| < m$ or $|\text{supp}(\beta)| < n$. Thus, in a pair, no two edges share the same vertex. We will call each pair a cell.

    Similarly, for $mn$ odd, consider the equation $\alpha\beta=1$. This would lead to pairing of $mn-1$ edges in the complete bipartite graph $\mathcal{G}(A,B)$. For the sufficiency of the existence of non-trivial zero-divisors and units in torsion-free group rings, there are additional combinatorial properties and structures associated with the bipartite graph and the pairing. In this section, we will describe the additional combinatorial constructs and the conditions that are sufficient for the existence of counterexamples to the zero-divisor conjecture and the unit conjectures.

    Throughout this section $A :=\{a_1, \ldots, a_m\}$ and $B:=\{b_1, \ldots, b_n\}$, where $m$ and $n$ denote the cardinality of $A$ and $B$, respectively.
    
    \subsection{Product structures}
	Let $\mathcal{G}(A,B)$ denote the complete bipartite graph on two non-empty finite sets $A$ and $B$. 
    \subsubsection{Vertical edges}
    We will identify the edges in $\mathcal{G}(A,B)$, which we call the vertical edges, with the elements of the cartesian product $(a,b) \in A \times B$.  We denote the set of vertical edges of $\mathcal{G}(A,B)$ by $E_{\mathcal{G}}$ and the set of vertices by $V_{\mathcal{G}}$ which is $A \sqcup B$. 
    \subsubsection{Disjoint vertex condition}\label{subsect:disjoint_vertex} 
   
    Given two finite sets $A$ and $B$, a \textit{partition} \( P \) of the edge set \( A \times B \) is a collection of subsets \( P = \{C_i\}_{i \in I} \), where \( I \) is an index set, such that \( A \times B = \sqcup_{i \in I} C_i \)  where the subsets \( C_i \) are pairwise disjoint and their union equals \( A \times B \). Each subset \( C_i \subseteq A \times B \) (called a \textit{cell}) satisfies the disjoint vertex condition if for any two distinct edges \( (a, b), (a', b') \in C_i \), we have \( a \neq a' \) and \( b \neq b' \), i.e., no two edges in \( C_i \) share a common vertex. If \( |C_i| = k \), the cell \( C_i \) is called a \textit{\( k \)-cell}. A partition \( P = \{C_i\}_{i \in I} \), where \( I \) is an index set, satisfies the \textit{disjoint vertex condition} if each cell $C_i$ satisfies the disjoint vertex condition.
    
    \subsubsection{Product structures and substructures}
    Finally, a \textit{product structure} is a triple \( \Pi = (A, B, P) \), where:
    \begin{itemize}
        \item \( A \) and \( B \) are finite sets,
        \item \( P \) is a partition of \( A \times B \) satisfying the disjoint vertex condition.
    \end{itemize}
    
	By a \textit{subpartition} $S$ of the edge set $A\times B$, we mean $S =  \{C_i\}_{i \in I}$ where $I$ is an index set such that $C_i \subseteq A\times B$ and $\sqcup_{i \in I} C_i \subseteq A\times B$. Each $C_i$ is called a cell. A subpartition $S=\{C_i\}_{i \in I}$ where $I$ is an index set is said to satisfy the disjoint vertex condition if each cell $C_i$ satisfies the disjoint vertex condition. Given two finite sets $A, B$ and a subpartition $S$ satisfying the disjoint vertex condition, a \textit{product substructure} is a triple $(A,B,S)$.  
	
	Let $P$ be a partition or a subpartition. $P$ is called {\textit{even}} if each cell in $P$ is a $2$-cell. $P$ is called { \textit{odd}} if exactly one cell is $1$-cell and rest of the cells in $P$ are $2-$cells.
    
    
    Given a product structure or substructure $\Pi = (A, B, P)$, we will associate with $\Pi$ oriented horizontal edges, an oriented graph on $A$, denoted $\mathsf{L}_A$, and an oriented graph on $B$, denoted $\mathsf{L}_B$.

	\subsection{Horizontal edges} 
	
	Given $\Pi = (A,B,P)$, define the sets $\Bar{E}_A$ and $\Bar{E}_B$ as below:
	\begin{eqnarray*}
		\Bar{E}_A &:=& \{\{a, a'\}  ~|~  a, a'\in A \text{ and }\exists~ b, b' \in B \text{ such that }\{(a, b), (a', b')\} \in P\}\\
		\Bar{E}_B &:=& \{\{b, b'\}  ~|~  b, b' \in B \text{ and }\exists~ a, a' \in A \text{ such that }\{(a, b), (a', b')\} \in P\}\\
		\Bar{E}_{AB} &:=& \Bar{E}_A \sqcup \Bar{E}_B
	\end{eqnarray*} 
	
	The set $\Bar{E}_A$ is called the set of \textit{horizontal edges on $A$} with respect to $\Pi$ and the set $\Bar{E}_B$ is called  the set of \textit{horizontal edges on $B$} with respect to $\Pi$.
	\begin{example}\label{ex:horizontal_edges}
	    Let $A = \{a_1,a_2,a_3\}$ and $B=\{b_1,b_2,b_3,b_4\}$. Let $P = \{\{(a_1,b_1),(a_2,b_2)\}, \{(a_1,b_2), (a_2,b_3)\}\}$ be a subpartition of $A\times B $, then $\Bar{E}_A = \{\{a_1,a_2\}\}$ and $\Bar{E}_B := \{\{b_1,b_2\}, \{b_2,b_3\}\}$.
	\end{example}

	\subsection{Orientation of horizontal edges}
 
	Define $E_A$ and $E_B$ as below:
    \begin{align}
    E_A & :=  \{(a, a')  \mid a,a'\in A \text{ and }\exists~ b, b' \in B \text{ such that } \{(a, b), (a', b')\} \in P\} \label{def:Ha} \\
    E_B & :=  \{(b, b')  \mid b,b'\in B \text{ and }\exists~ a, a' \in A \text{ such that } \{(a, b), (a', b')\} \in P\} \label{def:Hb} \\
    E_{AB} & :=  E_A \sqcup E_B \label{def:Hab}
\end{align}
For instance, in Example \ref{ex:horizontal_edges}, $E_A = \{(a_1, a_2), (a_2, a_1)\}$ and $E_B = \{(b_1, b_2),(b_2,b_3), (b_2, b_1), (b_3, b_2)\}$.
    \begin{definition}[Orientation]\label{def:orientation}
        \textit{An orientation} on $\Pi$ is a function $O: \Bar{E}_{AB} \to E_{AB}$ such that
	\begin{itemize}
	    \item for each $\{a,a'\}\in \Bar{E}_A, O(\{a,a'\}) = (a,a')$ or $O(\{a,a'\}) = (a',a)$,
	    \item for each $\{b,b'\}\in \Bar{E}_B, O(\{b,b'\}) = (b,b')$ or $O(\{b,b'\}) = (b',b)$, and 
     \item for each $2-$cell $\{(a,b),(a',b')\} \in P$,
     \begin{align*}
         ( O(\{a,a'\}) &= (a,a') \text{ and } O(\{b,b'\}) = (b,b')), \text{ or, }  \\
         (O(\{a,a'\}) &= (a',a) \text{ and } O(\{b,b'\}) = (b',b))
     \end{align*}
	\end{itemize}
    \end{definition}

    \begin{definition}[Orientable product substructure]\label{def:orientablepi}
        A product structure or substructure $\Pi = (A,B,P)$ is called \text{orientable} if there exists an orientation on $\Pi$. 
    \end{definition}
    For instance, in Example \ref{ex:horizontal_edges}, $O(\{a_1,a_2\}) = (a_1,a_2)$ and $O(\{b_1,b_2\}) = (b_1,b_2)$. This implies $O(\{b_2,b_3\}) = (b_2,b_3)$.
    Note that if a product substructure $\Pi = (A,B,S)$ is not orientable then for any product substructure $\Pi' = (A,B,S')$ such that $S \subset S'$, $\Pi'$ is not orientable as well.

    \begin{example}[Failure of orientability]\label{ex:2x2_not_possible}
        Let $A$ and $B$ be two finite sets. Let $C_1 = \{(a_1,b_1),(a_2,b_2)\}$ and $C_2 = \{(a_1,b_2),(a_2,b_1)\}$ as illustrated in Figure \ref{fig:orientation_failure}. Consider $\Pi = (A,B, P)$ where $P = \{C_1, C_2\}$. For $C_1$, the horizontal edges are $\{a_1, a_2\}$ and $\{b_1, b_2\}$. Let $O(\{a_1, a_2\}) = (a_1, a_2)$. Then $O(\{b_1, b_2\}) = (b_1, b_2)$ by Definition \ref{def:orientation}. Since $C_2$ consists of horizontal edges $\{a_1, a_2\}$ and $\{b_1, b_2\}$ as well, $O(\{b_1, b_2\}) = (b_2, b_1)$ which contradicts orientability. By a similar argument, $O(\{a_1, a_2\}) = (a_2, a_1)$ yields failure of orientability. This also proves that there is no orientable even product structure for $A =\{a_1,a_2\}$ and $B = \{b_1,b_2\}$.
    \end{example}

    \subsection{Horizontal graphs}

     To an orientable product substructure $\Pi$, we associate two directed graphs, $\mathsf{L}_A$ and $\mathsf{L}_B$, on the sets of vertices $A$ and $B$, respectively. The edges of $\mathsf{L}_A$ are determined by $E_A$: if $(a, a') \in E_A$, there is a directed edge from $a$ to $a'$. Similarly, the edges of $\mathsf{L}_B$ are determined by $E_B$: if $(b, b') \in E_B$, there is a directed edge from $b$ to $b'$. We then define the combined graph $\mathsf{L}_{AB}$ as the disjoint union of $\mathsf{L}_A$ and $\mathsf{L}_B$:
    \begin{equation}\label{eqn:defLAB}
    \mathsf{L}_{AB} := \mathsf{L}_A \sqcup \mathsf{L}_B.    
    \end{equation}

The following example illustrates the failure of orientability for a product substructure $\Pi$. Let $A$ and $B$ be two finite sets. Consider the product structure $\Pi = (A, B, P)$, where $P := \{C_1, C_2\}$ and
    \[
    C_1 := \{(a_1, b_1), (a_2, b_2)\}, \quad C_2 := \{(a_1, b_2), (a_2, b_1)\}.
    \]
    For the cell $C_1$, the horizontal edges are $\{a_1, a_2\}$ and $\{b_1, b_2\}$. Assume the orientation function $O$ satisfies $O(\{a_1, a_2\}) := (a_1, a_2)$. Then, by Definition~\ref{def:orientation}, $O(\{b_1, b_2\}) = (b_1, b_2)$ to maintain consistent orientation for the horizontal edges of $C_1$. Next, consider $C_2$. The horizontal edges of $C_2$ are also $\{a_1, a_2\}$ and $\{b_1, b_2\}$. To satisfy the orientation condition for $C_2$, we would need $O(\{b_1, b_2\}) = (b_2, b_1)$ to match the edge configuration of $C_2$. This leads to a contradiction, as $O(\{b_1, b_2\})$ cannot simultaneously be $(b_1, b_2)$ for $C_1$ and $(b_2, b_1)$ for $C_2$. A similar contradiction arises if we assume $O(\{a_1, a_2\}) := (a_2, a_1)$ instead. Thus, the product structure $\Pi$ is not orientable.
   \begin{figure}
       \centering




\tikzstyle arrowstyle=[scale=1]
\tikzstyle arrowtipinmiddle=[postaction={decorate,decoration={markings,mark=at position .56 with {\arrow[arrowstyle]{stealth'}}}}]
\tikzstyle verticalarrowtipinmiddle=[postaction={decorate,decoration={markings,mark=at position .7 with {\arrow[arrowstyle]{stealth'}}}}]
\tikzstyle 2arrowtipinmiddle=[postaction={decorate,decoration={markings,mark=at position .53 with {\arrow[arrowstyle]{{stealth'}}},mark=at position .58 with {\arrow[arrowstyle]{{stealth'}}}}}]
\tikzstyle vertical2arrowtipinmiddle=[postaction={decorate,decoration={markings,mark=at position .59 with {\arrow[arrowstyle]{{stealth'}}},mark=at position .62 with {\arrow[arrowstyle]{{stealth'}}}}}]
\tikzstyle 3arrowtipinmiddle=[postaction={decorate,decoration={markings,mark=at position .47 with {\arrow[arrowstyle]{stealth'}},mark=at position .56 with {\arrow[arrowstyle]{stealth'}},mark=at position .65 with {\arrow[arrowstyle]{stealth'}}}}]
\tikzstyle vertical3arrowtipinmiddle=[postaction={decorate,decoration={markings,mark=at position .33 with {\arrow[arrowstyle]{stealth'}},mark=at position .36 with {\arrow[arrowstyle]{stealth'}},mark=at position .39 with {\arrow[arrowstyle]{stealth'}}}}]
\tikzstyle trianglearrowtipinmiddle=[postaction={decorate,decoration={markings,mark=at position .56 with {\arrow[arrowstyle]{Triangle[open]}}}}]
\tikzstyle verticatrianglelarrowtipinmiddle=[postaction={decorate,decoration={markings,mark=at position .7 with {\arrow[arrowstyle]{Triangle[open]}}}}]
 \tikzstyle triangle2arrowtipinmiddle=[postaction={decorate,decoration={markings,mark=at position .53 with {\arrow[arrowstyle]{{Triangle[open]}}},mark=at position .58 with {\arrow[arrowstyle]{{Triangle[open]}}}}}]
\tikzstyle verticaltriangle2arrowtipinmiddle=[postaction={decorate,decoration={markings,mark=at position .59 with {\arrow[arrowstyle]{{Triangle[open]}}},mark=at position .62 with {\arrow[arrowstyle]{{Triangle[open]}}}}}]
\tikzstyle triangle3arrowtipinmiddle=[postaction={decorate,decoration={markings,mark=at position .47 with {\arrow[arrowstyle]{Triangle[open]}},mark=at position .56 with {\arrow[arrowstyle]{Triangle[open]}},mark=at position .65 with {\arrow[arrowstyle]{Triangle[open]}}}}]
\tikzstyle verticaltriangle3arrowtipinmiddle=[postaction={decorate,decoration={markings,mark=at position .33 with {\arrow[arrowstyle]{Triangle[open]}},mark=at position .36 with {\arrow[arrowstyle]{Triangle[open]}},mark=at position .39 with {\arrow[arrowstyle]{Triangle[open]}}}}]

\begin{tikzpicture}

    \node[fill=black, circle, inner sep=1.5pt, label=above:$b_1$] (b1) at (0,3) {};
    \node[fill=black, circle, inner sep=1.5pt, label=above:\small{$b_2$}] (b2) at (1.2,3) {};
    \node[fill=none, text centered] (n) at (1.7,3) {$\ldots$};
    \node[fill=black, circle, inner sep=1.5pt, label=above:$b_i$] (bn) at (2.2,3) {};
    \node[fill=black, circle, inner sep=1.5pt, label=below:$a_1$] (a1) at (0,1) {};
    \node[fill=black, circle, inner sep=1.5pt, label=below:$a_2$] (a2) at (1.2,1) {};
    \node[fill=none, text centered] (n) at (1.9,1) {$\ldots$};
    \node[fill=black, circle, inner sep=1.5pt, label=below:$a_m$] (am) at (2.5,1) {};
    \draw[line width=0.0500cm, color=royalblue] (a1) -- (b1);
    \draw[line width=0.0500cm, color=royalblue] (a2) -- (b2);
    \draw[arrowtipinmiddle, line width=0.0500cm, color=royalblue, bend right] (a1) to (a2);
    \draw[arrowtipinmiddle, line width=0.0500cm, color=royalblue, bend left] (b1) to (b2);
    
    \node[fill=black, circle, inner sep=1.5pt, label=above:$b_1$] (b1) at (5.5,3) {};
    \node[fill=black, circle, inner sep=1.5pt, label=above:\small{$b_2$}] (b2) at (7.5,3) {};
    \node[fill=none, text centered] (n) at (8,3) {$\ldots$};
    \node[fill=black, circle, inner sep=1.5pt, label=above:$b_n$] (bn) at (8.75,3) {};
    \node[fill=black, circle, inner sep=1.5pt, label=below:$a_1$] (a1) at (5.5,1) {};
    \node[fill=black, circle, inner sep=1.5pt, label=below:$a_2$] (a2) at (7,1) {};
    \node[fill=none, text centered] (n) at (7.8,1) {$\ldots$};
    \node[fill=black, circle, inner sep=1.5pt, label=below:$a_m$] (am) at (8.5,1) {};
    \draw[line width=0.0500cm, color=royalblue] (a1) -- (b1);
    \draw[line width=0.0500cm, color=royalblue] (a2) -- (b2);
    \draw[line width=0.0500cm, color=royalblue] (a1) -- (b2);
    \draw[line width=0.0500cm, color=royalblue] (a2) -- (b1);
   
    \draw[arrowtipinmiddle, line width=0.0500cm, color=royalblue, bend right] (a1) to (a2);
    \draw[arrowtipinmiddle, line width=0.0500cm, color=royalblue, bend left=20] (b1) to (b2);
    \draw[arrowtipinmiddle, line width=0.0500cm, color=royalblue, bend right = 60] (b2) to (b1);
   
    \node[fill=none, text centered] (n) at (6.8,0) {\footnotesize{$P = \{C_1, C_2\}$}};    
     \node[fill=none, text centered] (n) at (1,0) {\footnotesize{$C_1$}};
    \end{tikzpicture}
       \caption{Left: For $C_1$, $O(\{a_1, a_2\}) = (a_1, a_2)$ and $O(\{b_1, b_2\}) = (b_1, b_2)$. Right: For $C_2$, $O(\{a_1, a_2\}) = (a_1, a_2)$ and $O(\{b_1, b_2\}) = (b_2, b_1)$, showing that $\Pi = (A,B,P)$ is not orientable.}

       \label{fig:orientation_failure}
   \end{figure}

    For an orientable product structure $\Pi = (A, B, P)$, the associated graph $\mathsf{L}_{AB}$ cannot contain multiple edges between any pair of vertices.

    In this paper, we will be interested in product substructures (or structures) $\Pi=(A,B,P)$ where $P$ is either even or odd.  Throughout the paper by an edge $(x,y)$ we mean a directed edge from vertex $x$ to vertex $y$ and by an edge $\{x,y\}$ we mean an undirected edge between the vertices $x$ and $y$.

    \subsection{Taiko, the product graph}
    To a product substructure, we will associate two graphs: taiko $\mathcal{T}(\Pi)$ and middle-link graph $L_1(\Pi)$.
    
    \begin{definition}[Taiko]\label{def:taiko}
    Let \( \Pi = (A, B, P) \) be a product substructure, where \( P = \{C_i\}_{i\in I} \) is either even or odd subpartition of \( A \times B \) indexed by \( I \). Let \( E(P) = \{(a, b) \mid \exists C_i \in P \text{ so that }(a, b) \in C_i\} \subset A \times B \), representing a subset of vertical edges of the bipartite graph \( \mathcal{G}(A, B) \). 

    \textit{A} \textit{taiko}, also called \textit{a product graph}, denoted as ${\mathcal{T}}{(\Pi)}$, is a directed graph constructed from the product structure \( \Pi \), with the following components:
    \begin{itemize}
        \item The set of vertices is the disjoint union of \( A \) and \( B \), i.e., \( V(\mathcal{T}) = A \sqcup B \),
        \item The edge set is \( E_{\mathcal{T}} = E_{AB} \sqcup E \), where:
        \begin{itemize}
            \item \( E_{AB} \) is the set of the horizontal edges of $\mathsf{L}_A \sqcup \mathsf{L}_B$ as given in \eqref{def:Hab},
            \item \( E(P) \) is the set of vertical edge set as defined earlier.
        \end{itemize}
    \end{itemize}
\end{definition}

 \subsection{Equivalence classes}
    We define a relation $\sim$ on the set $\Bar{E}_{AB}$ as $\{a,a'\} \sim \{b,b'\}$ if there exists a cell $C$ of the form $\{(a,b), (a',b')\}$ or $\{(a,b'), (a',b)\}$. This is an equivalence relation and we color the edges of each equivalence class with one color. To the cell $C$ we assign the same color as that of horizontal edge $\{a,a'\}$ or $\{b,b'\}$. Similarly, for oriented product substructures, $\Pi$, an edge $(a,a')$ (or $(b,b')$) inherits the same color as the edge $\{a,a'\}$ (or $\{b,b'\}$). Thus this relation is an equivalence relation on $E_{AB}$ as well.

    \begin{convention}[colors and directions in a taiko]\label{convention:taiko_verticaledges_etc}
        A taiko \( \mathcal{T}(\Pi) \) is illustrated by placing \( A \) at the bottom and \( B \) at the top. A vertical edge \( (a, b) \in A \times B \) is drawn whenever there exists a 2-cell in \( P \) containing \( (a, b) \). All horizontal edges from the set \( E_{AB} \) are also included. The orientation of vertical edges is typically omitted in the illustration, as it does not affect the theoretical structure of the taiko. Moreover, with the definition of the taiko, the 2-cells in $P$ naturally inherit the same colors as their horizontal edges have, that is, there is a consistent coloring of 2-cells and horizontal edges.
    \end{convention}
    
    \begin{example}\label{ex:taiko}
        
    Let $A=\{a_1,a_2,a_3,a_4\}$, $B=\{b_1,b_2,b_3,b_4\}$ and let $P = \bigsqcup_{i=1}^{8} \{C_i\}$ be an even partition such that $C_1={\color{orange}{\mathbf{\{(a_1,b_1),(a_2,b_2)\}}}}$ , $C_2={\color{springgreen}{\mathbf{\{(a_1,b_2),(a_3,b_3)\}}}}$ , $C_3={\color{orange}{\mathbf{\{(a_2,b_1),(a_3,b_2)\}}}}$ , $C_4={\color{royalblue}{\mathbf{\{(a_1,b_3),(a_4,b_4)\}}}}$ , $C_5={\color{lavender}{\mathbf{\{(a_2,b_3),(a_4,b_1)\}}}}$ , $C_6={\color{springgreen}{\mathbf{\{(a_1,b_4),(a_3,b_1)\}}}}$ , $C_7={\color{lavender}{\mathbf{\{(a_2,b_4),(a_4,b_2)\}}}}$ , $C_8={\color{royalblue}{\mathbf{\{(a_4,b_3),(a_3,b_4)\}}}}$. For each $2$-cell $C_i$, no two edges share a common vertex, that is, if $C_i = \{(a,b),(a',b')\}$ then $a\neq a'$ and $b\neq b'$. Each $C_i$ satisfies the disjoint vertex condition in Section \ref{subsect:disjoint_vertex} and hence, $P$ satisfies the disjoint vertex condition. This gives us a product structure $\Pi = (A,B, P)$. We can visualize $\Pi$ as the bipartite graph $\mathcal{G}(A,B)$ such that each cell corresponds to a pair of edges of this bipartite graph $\mathcal{G}(A,B)$ as shown in Figure \ref{fig:taiko_example_4_by_4}.

\begin{figure}[ht]
\tikzstyle arrowstyle=[scale=1]
\tikzstyle arrowtipinmiddle=[postaction={decorate,decoration={markings,mark=at position .56 with {\arrow[arrowstyle]{stealth'}}}}]
\tikzstyle verticalarrowtipinmiddle=[postaction={decorate,decoration={markings,mark=at position .7 with {\arrow[arrowstyle]{stealth'}}}}]
\tikzstyle 2arrowtipinmiddle=[postaction={decorate,decoration={markings,mark=at position .53 with {\arrow[arrowstyle]{{stealth'}}},mark=at position .58 with {\arrow[arrowstyle]{{stealth'}}}}}]
\tikzstyle vertical2arrowtipinmiddle=[postaction={decorate,decoration={markings,mark=at position .59 with {\arrow[arrowstyle]{{stealth'}}},mark=at position .62 with {\arrow[arrowstyle]{{stealth'}}}}}]
\tikzstyle 3arrowtipinmiddle=[postaction={decorate,decoration={markings,mark=at position .47 with {\arrow[arrowstyle]{stealth'}},mark=at position .56 with {\arrow[arrowstyle]{stealth'}},mark=at position .65 with {\arrow[arrowstyle]{stealth'}}}}]
\tikzstyle vertical3arrowtipinmiddle=[postaction={decorate,decoration={markings,mark=at position .33 with {\arrow[arrowstyle]{stealth'}},mark=at position .36 with {\arrow[arrowstyle]{stealth'}},mark=at position .39 with {\arrow[arrowstyle]{stealth'}}}}]
\tikzstyle trianglearrowtipinmiddle=[postaction={decorate,decoration={markings,mark=at position .56 with {\arrow[arrowstyle]{Triangle[open]}}}}]
\tikzstyle verticatrianglelarrowtipinmiddle=[postaction={decorate,decoration={markings,mark=at position .7 with {\arrow[arrowstyle]{Triangle[open]}}}}]
\tikzstyle triangle2arrowtipinmiddle=[postaction={decorate,decoration={markings,mark=at position .53 with {\arrow[arrowstyle]{{Triangle[open]}}},mark=at position .58 with {\arrow[arrowstyle]{{Triangle[open]}}}}}]
\tikzstyle verticaltriangle2arrowtipinmiddle=[postaction={decorate,decoration={markings,mark=at position .59 with {\arrow[arrowstyle]{{Triangle[open]}}},mark=at position .62 with {\arrow[arrowstyle]{{Triangle[open]}}}}}]
\tikzstyle triangle3arrowtipinmiddle=[postaction={decorate,decoration={markings,mark=at position .47 with {\arrow[arrowstyle]{Triangle[open]}},mark=at position .56 with {\arrow[arrowstyle]{Triangle[open]}},mark=at position .65 with {\arrow[arrowstyle]{Triangle[open]}}}}]
\tikzstyle verticaltriangle3arrowtipinmiddle=[postaction={decorate,decoration={markings,mark=at position .33 with {\arrow[arrowstyle]{Triangle[open]}},mark=at position .36 with {\arrow[arrowstyle]{Triangle[open]}},mark=at position .39 with {\arrow[arrowstyle]{Triangle[open]}}}}]
\begin{tikzpicture}[scale = 0.5]
\draw[fill=white] (0.0000cm,0cm) circle(0.1400cm) node[] {};
\draw (0.0000cm,-0.8cm) node[] {${\scriptstyle a_1}$};
\draw[fill=white] (7.3333cm,0cm) circle(0.1400cm) node[] {};
\draw (7.3333cm,-0.8cm) node[] {${\scriptstyle a_2}$};
\draw[fill=white] (14.6667cm,0cm) circle(0.1400cm) node[] {};
\draw (14.6667cm,-0.8cm) node[] {${\scriptstyle a_3}$};
\draw[fill=white] (22.0000cm,0cm) circle(0.1400cm) node[] {};
\draw (22.0000cm,-0.8cm) node[] {${\scriptstyle a_4}$};
\filldraw[black] (0.0000cm,8.0000cm) circle(0.1300cm) node[] {};
\draw (0.0000cm,8.8000cm) node[] {${\scriptstyle b_1}$};
\filldraw[black] (7.3333cm,8.0000cm) circle(0.1300cm) node[] {};
\draw (7.3333cm,8.8000cm) node[] {${\scriptstyle b_2}$};
\filldraw[black] (14.6667cm,8.0000cm) circle(0.1300cm) node[] {};
\draw (14.6667cm,8.8000cm) node[] {${\scriptstyle b_3}$};
\filldraw[black] (22.0000cm,8.0000cm) circle(0.1300cm) node[] {};
\draw (22.0000cm,8.8000cm) node[] {${\scriptstyle b_4}$};
\draw[verticalarrowtipinmiddle,line width=0.0500cm,color=orange] (0.0000cm,0cm) to (0.0000cm,8.0000cm);
\draw[verticalarrowtipinmiddle,line width=0.0500cm,color=orange] (7.3333cm,0cm) to (7.3333cm,8.0000cm);
\draw[verticalarrowtipinmiddle,line width=0.0500cm,color=springgreen] (0.0000cm,0cm) to (7.3333cm,8.0000cm);
\draw[verticalarrowtipinmiddle,line width=0.0500cm,color=springgreen] (14.6667cm,0cm) to (14.6667cm,8.0000cm);
\draw[verticalarrowtipinmiddle,line width=0.0500cm,color=orange] (7.3333cm,0cm) to (0.0000cm,8.0000cm);
\draw[verticalarrowtipinmiddle,line width=0.0500cm,color=orange] (14.6667cm,0cm) to (7.3333cm,8.0000cm);
\draw[verticalarrowtipinmiddle,line width=0.0500cm,color=royalblue] (0.0000cm,0cm) to (14.6667cm,8.0000cm);
\draw[verticalarrowtipinmiddle,line width=0.0500cm,color=royalblue] (22.0000cm,0cm) to (22.0000cm,8.0000cm);
\draw[verticalarrowtipinmiddle,line width=0.0500cm,color=lavender] (7.3333cm,0cm) to (14.6667cm,8.0000cm);
\draw[verticalarrowtipinmiddle,line width=0.0500cm,color=lavender] (22.0000cm,0cm) to (0.0000cm,8.0000cm);
\draw[verticalarrowtipinmiddle,line width=0.0500cm,color=springgreen] (0.0000cm,0cm) to (22.0000cm,8.0000cm);
\draw[verticalarrowtipinmiddle,line width=0.0500cm,color=springgreen] (14.6667cm,0cm) to (0.0000cm,8.0000cm);
\draw[verticalarrowtipinmiddle,line width=0.0500cm,color=lavender] (7.3333cm,0cm) to (22.0000cm,8.0000cm);
\draw[verticalarrowtipinmiddle,line width=0.0500cm,color=lavender] (22.0000cm,0cm) to (7.3333cm,8.0000cm);
\draw[verticalarrowtipinmiddle,line width=0.0500cm,color=royalblue] (22.0000cm,0cm) to (14.6667cm,8.0000cm);
\draw[verticalarrowtipinmiddle,line width=0.0500cm,color=royalblue] (14.6667cm,0cm) to (22.0000cm,8.0000cm);
\draw[arrowtipinmiddle, line width=0.0500cm, color=orange, bend right=20] (0.0000cm,0cm) to (7.3333cm,0cm);
\draw[arrowtipinmiddle, line width=0.0500cm, color=orange, bend left=20] (0.0000cm,8.0000 cm) to (7.3333cm,8.0000cm);
\draw[arrowtipinmiddle, line width=0.0500cm, color=springgreen, bend right] (0.0000cm,0cm) to (14.6667cm,0cm);
\draw[arrowtipinmiddle, line width=0.0500cm, color=springgreen, bend left=20] (7.3333cm,8.0000 cm) to (14.6667cm,8.0000cm);
\draw[arrowtipinmiddle, line width=0.0500cm, color=orange, bend right=20] (7.3333cm,0cm) to (14.6667cm,0cm);
\draw[arrowtipinmiddle, line width=0.0500cm, color=royalblue, bend right] (0.0000cm,0cm) to (22.0000cm,0cm);
\draw[arrowtipinmiddle, line width=0.0500cm, color=lavender, bend right] (7.3333cm,0cm) to (22.0000cm,0cm);
\draw[arrowtipinmiddle, line width=0.0500cm, color=lavender, bend right] (14.6667cm,8.0000 cm) to (0.0000cm,8.0000cm);
\draw[arrowtipinmiddle, line width=0.0500cm, color=springgreen, bend right] (0.0000cm,0cm) to (14.6667cm,0cm);
\draw[arrowtipinmiddle, line width=0.0500cm, color=springgreen, bend right] (22.0000cm,8.0000 cm) to (0.0000cm,8.0000cm);
\draw[arrowtipinmiddle, line width=0.0500cm, color=lavender, bend right] (7.3333cm,0cm) to (22.0000cm,0cm);
\draw[arrowtipinmiddle, line width=0.0500cm, color=lavender, bend right] (22.0000cm,8.0000 cm) to (7.3333cm,8.0000cm);
\draw[arrowtipinmiddle, line width=0.0500cm, color=royalblue, bend left=20] (22.0000cm,0cm) to (14.6667cm,0cm);
\draw[arrowtipinmiddle, line width=0.0500cm, color=royalblue, bend left=20] (14.6667cm,8.0000 cm) to (22.0000cm,8.0000cm);
\draw[fill=white] (0.0000cm,0cm) circle(0.1400cm) node[below] {};
\draw[fill=white] (7.3333cm,0cm) circle(0.1400cm) node[below] {};
\draw[fill=white] (14.6667cm,0cm) circle(0.1400cm) node[below] {};
\draw[fill=white] (22.0000cm,0cm) circle(0.1400cm) node[below] {};
\filldraw[black] (0.0000cm,8.0000cm) circle(0.1300cm) node[above] {};
\filldraw[black] (7.3333cm,8.0000cm) circle(0.1300cm) node[above] {};
\filldraw[black] (14.6667cm,8.0000cm) circle(0.1300cm) node[above] {};
\filldraw[black] (22.0000cm,8.0000cm) circle(0.1300cm) node[above] {};
\end{tikzpicture}
\caption{$(m,n) = (4,4)$. Taiko (the product graph). The 8 \ 2-cells of the partition are split into 4 colors: orange, green, blue and purple}
\label{fig:taiko_example_4_by_4}
\end{figure}
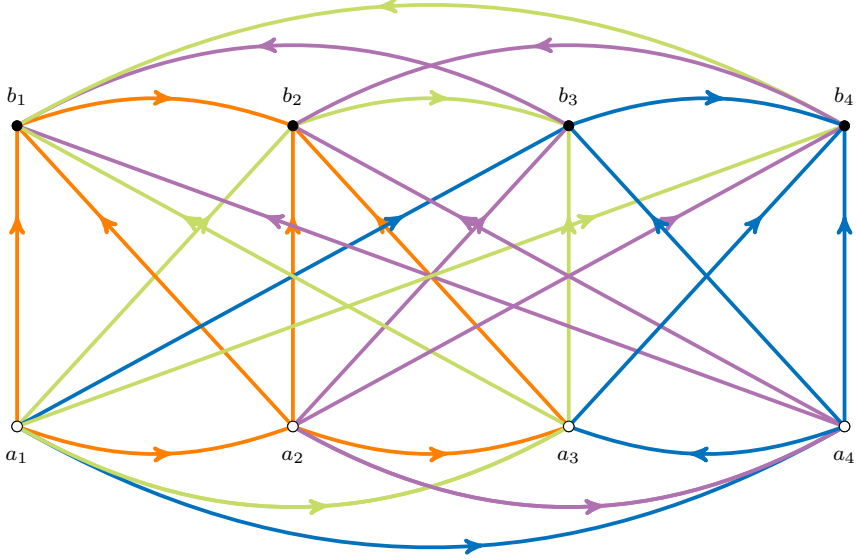

We now check orientability of the product structure $\Pi = (A,B,P)$. Let $O(\{a_1,a_2\}):=(a_1,a_2)$ then $O(\{b_1,b_2\}) = (b_1, b_2)$. From cell $C_3$, we have $O(\{a_2,a_3\}) = (a_2,a_3)$. Similarly, let $O(\{a_1,a_3\}) := (a_1,a_3)$ then $O(\{b_2,b_3\}) = (b_2,b_3)$. Proceeding in the similar fashion, we see that $\Pi$ is orientable. The edge set of $\mathsf{L}_A$ is $\{(a_1,a_2),(a_2,a_3),(a_1,a_3),(a_1,a_4),(a_2,a_4),(a_3,a_4)\}$ and the edge set of $\mathsf{L}_B$ is $\{(b_1,b_2),(b_1,b_3),(b_1,b_4),(b_2,b_3),(b_2,b_4),(b_3,b_4)\}$. Both $\mathsf{L}_A$ and $\mathsf{L}_B$ are complete graphs on their respective set of vertices. A product graph, or a taiko, corresponding to the orientable product structure $\Pi$ is shown in Figure \ref{fig:taiko_example_4_by_4}.

The equivalence classes of \( E_{AB} \) are:
    \begin{align*}
         \boldsymbol{\color{orange}{\{(a_1, a_2), (a_2, a_3),(b_1, b_2)\}}}, \, \boldsymbol{\color{springgreen}{\{(a_1, a_3),(b_4, b_1), (b_2, b_3)\}}}, \, \\\boldsymbol{\color{lavender}{\{(a_2, a_4),(b_3, b_1), (b_4, b_2)\}}}, \, \boldsymbol{\color{royalblue}{\{(a_1, a_4), (a_3, a_4),(b_3, b_4)\}}}
    \end{align*}  
    illustrated as orange, green, purple, and blue in Figure \ref{fig:taiko_example_4_by_4}. 
    
\end{example}

\subsection{The Middle Link}
    For a product substructure \(\Pi=(A,B,P)\) with the disjoint vertex condition, Mineyev associates a \(2\)-complex \(Y_\Pi\) whose \(0\)-skeleton consists of three vertices $x_A$, $x_1$ and $x_B$. For definitions and further details, see \cite{Mineyev2024}. We will only use the links of these three vertices; in particular,
    \[
    \operatorname{Lk}_{Y_\Pi}(x_A)\cong \mathsf L_A,\qquad
    \operatorname{Lk}_{Y_\Pi}(x_B)\cong \mathsf L_B .
    \]  
    \begin{definition}[Middle link]\label{def:middle-link}
    The \textit{middle link} of \(\Pi\) is
    \(
    \mathsf L_1(\Pi)\ :=\ \operatorname{Lk}_{Y_\Pi}(x_1).
    \) When \(\Pi\) is clear, we simply write \(\mathsf L_1\).
    \end{definition}
    
    
    \subsubsection*{Combinatorial construction of $\mathsf{L}_1$}
    Let \(\kappa:E_{AB}\to\{1,\dots,k\}\) be the color map on the directed horizontal edges of the taiko \(\mathcal T(\Pi)\) (equivalently, the equivalence-class map on \(E_{AB}\)); we refer to the \(i\)-th color and the \(i\)-th class interchangeably.

    \begin{itemize}
      \item \textit{Middle vertices.} For each color \(i\in\{1,\dots,k\}\), introduce two vertices
      \((i,\mathrm{in})\) and \((i,\mathrm{out})\). Let
      \[
        M:= \{1,\dots,k\} \times \{\text{in}, \text{out}\}.
      \]
    
      \item \textit{Vertex set.} \(V(\mathsf L_1)=A\ \sqcup\ B\ \sqcup\ M\).
    
      \item \textit{Edge set.} For each directed horizontal edge \(e=(x,y)\in E_{AB}\) of
      \(\mathcal T(\Pi)\) with color \(\kappa(e)=i\), add the following (undirected) edges to \(E(\mathsf L_1)\) 
      \[
      \{x,(i,\mathrm{out})\}\quad\text{and}\quad \{y,(i,\mathrm{in})\}.
      \]
    \end{itemize}
    
    Thus \(\mathsf L_1\) is an undirected graph on \(A\sqcup B\sqcup M\). For figures, we place \(A\) at the bottom, \(B\) at the top, and the middle vertices \(M\) between them.
    \magenta{(Any fixed orientation of the horizontal edges \(E_{AB}\) yields an isomorphic \(\mathsf L_1\); we use the
    natural forward orientations on \(\mathsf L_A\) and \(\mathsf L_B\).)}

    \begin{example*}
        For the product structure and taiko in Example \ref{ex:taiko}, we have $4$ colors and hence for $L_1$, the vertex set is $A \ \sqcup \ B \ \sqcup \ M$ where 
    \begin{align*}
        M = \{(\text{orange},{\text{in}}), (\text{orange},{\text{out}}), (\text{green},{\text{in}}), (\text{green},{\text{out}}), \\
        (\text{purple},{\text{in}}),
        (\text{purple},{\text{out}}), (\text{blue},{\text{in}}), (\text{blue},{\text{out}})\}
    \end{align*}

    Figure \ref{fig:taiko_example_middle_link} illustrates the middle link $\mathsf{L}_1$. 
    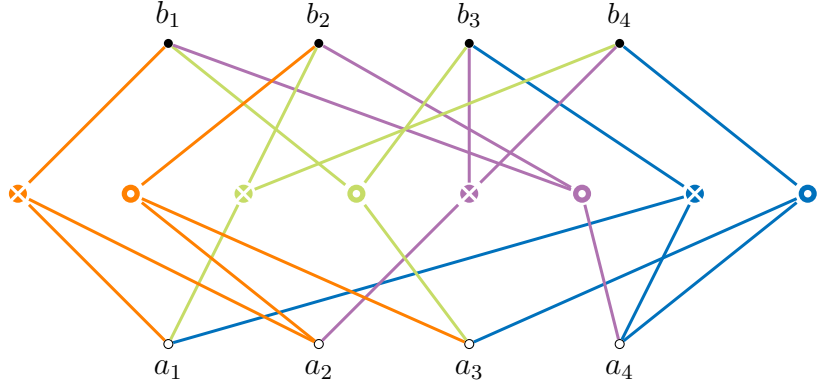
\begin{figure}[]    
        \centering
        \tikzstyle arrowstyle=[scale=1]
    \tikzstyle arrowtipinmiddle=[postaction={decorate,decoration={markings,mark=at position .56 with {\arrow[arrowstyle]{stealth'}}}}]
    \tikzstyle verticalarrowtipinmiddle=[postaction={decorate,decoration={markings,mark=at position .7 with {\arrow[arrowstyle]{stealth'}}}}]
    \tikzstyle 2arrowtipinmiddle=[postaction={decorate,decoration={markings,mark=at position .53 with {\arrow[arrowstyle]{{stealth'}}},mark=at position .58 with {\arrow[arrowstyle]{{stealth'}}}}}]
    \tikzstyle vertical2arrowtipinmiddle=[postaction={decorate,decoration={markings,mark=at position .59 with {\arrow[arrowstyle]{{stealth'}}},mark=at position .62 with {\arrow[arrowstyle]{{stealth'}}}}}]
    \tikzstyle 3arrowtipinmiddle=[postaction={decorate,decoration={markings,mark=at position .47 with {\arrow[arrowstyle]{stealth'}},mark=at position .56 with {\arrow[arrowstyle]{stealth'}},mark=at position .65 with {\arrow[arrowstyle]{stealth'}}}}]
    \tikzstyle vertical3arrowtipinmiddle=[postaction={decorate,decoration={markings,mark=at position .33 with {\arrow[arrowstyle]{stealth'}},mark=at position .36 with {\arrow[arrowstyle]{stealth'}},mark=at position .39 with {\arrow[arrowstyle]{stealth'}}}}]
    \tikzstyle trianglearrowtipinmiddle=[postaction={decorate,decoration={markings,mark=at position .56 with {\arrow[arrowstyle]{Triangle[open]}}}}]
    \tikzstyle verticatrianglelarrowtipinmiddle=[postaction={decorate,decoration={markings,mark=at position .7 with {\arrow[arrowstyle]{Triangle[open]}}}}]
     \tikzstyle triangle2arrowtipinmiddle=[postaction={decorate,decoration={markings,mark=at position .53 with {\arrow[arrowstyle]{{Triangle[open]}}},mark=at position .58 with {\arrow[arrowstyle]{{Triangle[open]}}}}}]
    \tikzstyle verticaltriangle2arrowtipinmiddle=[postaction={decorate,decoration={markings,mark=at position .59 with {\arrow[arrowstyle]{{Triangle[open]}}},mark=at position .62 with {\arrow[arrowstyle]{{Triangle[open]}}}}}]
    \tikzstyle triangle3arrowtipinmiddle=[postaction={decorate,decoration={markings,mark=at position .47 with {\arrow[arrowstyle]{Triangle[open]}},mark=at position .56 with {\arrow[arrowstyle]{Triangle[open]}},mark=at position .65 with {\arrow[arrowstyle]{Triangle[open]}}}}]
    \tikzstyle verticaltriangle3arrowtipinmiddle=[postaction={decorate,decoration={markings,mark=at position .33 with {\arrow[arrowstyle]{Triangle[open]}},mark=at position .36 with {\arrow[arrowstyle]{Triangle[open]}},mark=at position .39 with {\arrow[arrowstyle]{Triangle[open]}}}}]
    \begin{tikzpicture}

    \node[fill=black, circle, inner sep=1.2pt, label=above:$b_1$] (b1) at (2,4) {};
    \node[fill=black, circle, inner sep=1.2pt, label=above:\small{$b_2$}] (b2) at (4,4) {};
    \node[fill=black, circle, inner sep=1.2pt, label=above:\small{$b_3$}] (b3) at (6,4) {};
    \node[fill=black, circle, inner sep=1.2pt, label=above:\small{$b_4$}] (b4) at (8,4) {};
    
    \node[fill=none, draw=black, circle, inner sep=1.2pt, label=below:$a_1$] (a1) at (2,0) {};
    \node[fill=none, draw=black, circle, inner sep=1.2pt, label=below:$a_2$] (a2) at (4,0) {};
    \node[fill=none, draw=black, circle, inner sep=1.2pt, label=below:$a_3$] (a3) at (6,0) {};
    \node[fill=none, draw=black, circle, inner sep=1.2pt, label=below:$a_4$] (a4) at (8,0) {};

    \draw[fill=orange, draw=none] (0,2) circle [radius=0.12cm]; 
    \draw[white, line width=1.2pt] (-0.1,2.1) -- (0.1,1.9); 
    \draw[white, line width=1.2pt] (-0.1,1.9) -- (0.1,2.1); 
    \node (oo) at (0,2) {};
    
    \draw[fill=orange, draw=none] (1.5,2) circle [radius=0.12cm]; 
    \fill[white] (1.5,2) circle (0.05cm); 
    \node (oi) at (1.5,2) {};
    \draw[fill=springgreen, draw=none] (3,2) circle [radius=0.12cm]; 
    \draw[white, line width=1.2pt] (2.9,2.1) -- (3.1,1.9); 
    \draw[white, line width=1.2pt] (2.9,1.9) -- (3.1,2.1); 
    \node (go) at (3,2) {};
    
    \draw[fill=springgreen, draw=none] (4.5,2) circle [radius=0.12cm]; 
    \fill[white] (4.5,2) circle (0.05cm); 
    
    \node (gi) at (4.5,2) {};
    
    \draw[fill=lavender, draw=none] (6,2) circle [radius=0.12cm]; 
    \draw[white, line width=1.2pt] (5.9,2.1) -- (6.1,1.9); 
    \draw[white, line width=1.2pt] (5.9,1.9) -- (6.1,2.1); 

    \node (lo) at (6,2) {};
    
    \draw[fill=lavender, draw=none] (7.5,2) circle [radius=0.12cm]; 
    \fill[white] (7.5,2) circle (0.05cm); 

    \node (li) at (7.5,2) {};
    
    \draw[fill=royalblue, draw=none] (9,2) circle [radius=0.12cm]; 
    \draw[white, line width=1.2pt] (8.9,2.1) -- (9.1,1.9); 
    \draw[white, line width=1.2pt] (8.9,1.9) -- (9.1,2.1); 

    \node (bo) at (9,2) {};
    
    \draw[fill=royalblue, draw=none] (10.5,2) circle [radius=0.12cm]; 
    \fill[white] (10.5,2) circle (0.05cm); 

    \node (bi) at (10.5,2) {};


    \draw[line width=0.0400cm, color=orange] (a1) -- (oo);
    \draw[line width=0.0400cm, color=springgreen] (a1) -- (go);
    \draw[line width=0.0400cm, color=royalblue] (a1) -- (bo);
    
    \draw[line width=0.0400cm, color=orange] (a2) -- (oi);
    \draw[line width=0.0400cm, color=orange] (a2) -- (oo);
    \draw[line width=0.0400cm, color=lavender] (a2) -- (lo);
    
    \draw[line width=0.0400cm, color=orange] (a3) -- (oi);
    \draw[line width=0.0400cm, color=springgreen] (a3) -- (gi);
    \draw[line width=0.0400cm, color=royalblue] (a3) -- (bi);

    \draw[line width=0.0400cm, color=royalblue] (a4) -- (bi);
    \draw[line width=0.0400cm, color=royalblue] (a4) -- (bo);
    \draw[line width=0.0400cm, color=lavender] (a4) -- (li);

   \draw[line width=0.0400cm, color=orange] (b1) -- (oo);
    \draw[line width=0.0400cm, color=springgreen] (b1) -- (gi);
    \draw[line width=0.0400cm, color=lavender] (b1) -- (li);
    
    \draw[line width=0.0400cm, color=orange] (b2) -- (oi);
    \draw[line width=0.0400cm, color=springgreen] (b2) -- (go);
    \draw[line width=0.0400cm, color=lavender] (b2) -- (li);
    
    \draw[line width=0.0400cm, color=lavender] (b3) -- (lo);
    \draw[line width=0.0400cm, color=springgreen] (b3) -- (gi);
    \draw[line width=0.0400cm, color=royalblue] (b3) -- (bo);

    \draw[line width=0.0400cm, color=royalblue] (b4) -- (bi);
    \draw[line width=0.0400cm, color=springgreen] (b4) -- (go);
    \draw[line width=0.0400cm, color=lavender] (b4) -- (lo);

    \end{tikzpicture}
        \caption{The middle link $\mathsf{L}_1$ corresponding to Figure \ref{fig:taiko_example_4_by_4}. A vertex with color $i$ color and a cross represents $(i, \text{out})$ and a vertex with color $i$ and a dot represents $(i, \text{in})$.
        }
        \label{fig:taiko_example_middle_link}
    \end{figure}
    
    \end{example*}

\subsection{Paths, Cycles, and Girth}

\begin{definition}[Graphs]\label{def:graphs}
A \textit{finite unoriented (simple) graph} is a pair \(\mathcal G=(V,E)\) with a finite vertex set \(V\)
and an edge set \(E\subseteq \bigl\{\{u,v\}\subseteq V : u\neq v\bigr\}\).
A \textit{finite oriented (directed) graph} is a pair \(\mathcal G=(V,E)\) with
\(E\subseteq \bigl\{(u,v)\in V\times V : u\neq v\bigr\}\).
The \textit{underlying undirected graph} \(U(\mathcal G)\) has the same vertex set and
\(\{u,v\}\in E\bigl(U(\mathcal G)\bigr)\) iff \((u,v)\in E\) or \((v,u)\in E\).
\end{definition}

\begin{definition}[Walks, paths, cycles]\label{def:walk-path-cycle}
Let \(\mathcal G=(V,E)\) be a graph (unoriented or directed).
A \textit{walk of length \(k\)} is a sequence \(v_0,\dots,v_k\) with
\(\{v_{i-1},v_i\}\in E\) in the unoriented case, and \((v_{i-1},v_i)\in E\) in the directed case, for all \(i\).
A \textit{path} is a walk with all vertices distinct.
A \textit{cycle} is a closed walk \(v_0,\dots,v_k\) with \(k\ge 3\), \(v_0=v_k\), and
\(v_0,v_1,\dots,v_{k-1}\) all distinct. 
\end{definition}

\begin{definition}[Girth and half-girth]\label{def:girth}
The \textit{girth} of an unoriented graph \(\mathcal G\), denoted \(\girth(\mathcal G)\),
is the minimum length of a cycle in \(\mathcal G\), with the convention \(\girth(\mathcal G)=\infty\) if no cycle exists.
For a directed graph, unless otherwise stated, we define \(\girth(\mathcal G):=\girth\bigl(U(\mathcal G)\bigr)\).
The \textit{half-girth} of \(\mathcal G\) is \(\hgirth(\mathcal G):=\girth(\mathcal G)/2\), with \(\infty/2:=\infty\).
\end{definition}
\begin{remark}[Girth ignores orientation on $\mathsf L_A,\mathsf L_B$]
Although the horizontal graphs $\mathsf L_A$ and $\mathsf L_B$ are oriented, all girth
calculations use their underlying undirected graphs:
\[
  \girth(\mathsf L_A):=\girth\big(U(\mathsf L_A)\big),\ 
  \girth(\mathsf L_B):=\girth\big(U(\mathsf L_B)\big),\ 
  \girth(\mathsf L_{AB}) = \min\{\girth(\mathsf L_A), \girth(\mathsf L_B)\}
\]
where $U(\cdot)$ denotes the underlying undirected graph.
\end{remark}

\subsection{Combinatorial Conditions on Product Substructures}

	Our goal is to find a product structure $\Pi = (A,B,P)$ that satisfies the conditions $\mathsf{T_1}$ - $\mathsf{T_4}$:
\begin{tconds}

    \item {\sf orientation}. There exists a orientation function $O$ as in Definition \eqref{def:orientation}. Thus we want $\Pi$ to be orientable (Definition \eqref{def:orientablepi}). 
    
    \smallskip
    In the following conditions we assume \(\Pi\) is orientable.
    
    \item  {\sf no-fold}. A \textit{fold} in a taiko is a pair of horizontal edges that are incident to the same vertex $v \in A \sqcup B$, have the same color, and the same direction at $v$, meaning that they are either both incoming towards \(v\) or both outgoing from \(v\). $\Pi$ is said to satisfy no-fold condition if there is no-fold at any vertex in a taiko. For instance, the taiko in Figure \ref{fig:fold-example} has no fold. Note that the fold of on the right shown in Figure \ref{fig:fold-example} cannot occur at any vertex due to the definition of $E_{AB}$.
    
    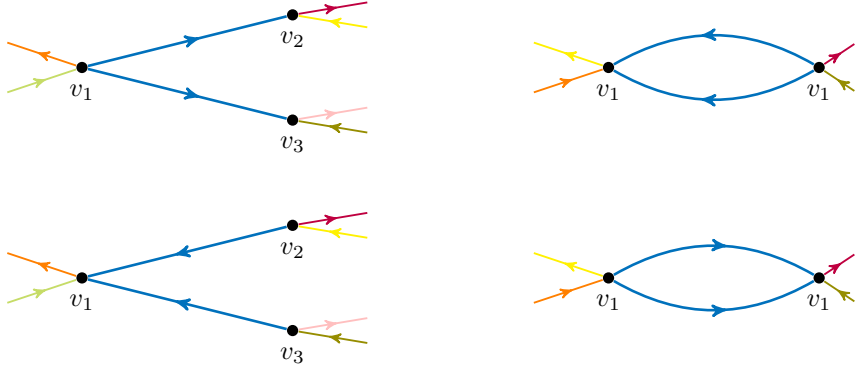
\begin{figure}[]
        \centering
                \tikzstyle arrowstyle=[scale=1]
        \tikzstyle arrowtipinmiddle=[postaction={decorate,decoration={markings,mark=at position .56 with {\arrow[arrowstyle]{stealth'}}}}]
        \begin{tikzpicture}[scale=0.7]
        \node[fill=black, circle, inner sep=1.5pt, label=below:\footnotesize{$v_1$}] (t1v1) at (0,5) {};
        \node[fill=black, circle, inner sep=1.5pt, label=below:\footnotesize{$v_2$}] (t1v2) at (4,6) {};
        \node[fill=black, circle, inner sep=1.5pt, label=below:\footnotesize{$v_3$}] (t1v3) at (4,4) {};
        \draw[fill=none, inner sep = 0.05cm] (-1.5,5.5)  node (w1) {};
        \draw[fill=none, inner sep = 0.05cm] (-1.5,4.5)  node (w2) {};

        \draw[fill=none, inner sep = 0.05cm] (5.5,6.25)  node (u1) {};
        \draw[fill=none, inner sep = 0.05cm] (5.5,5.75)  node (u2) {};
        
        \draw[fill=none, inner sep = 0.05cm] (5.5,4.25)  node (z1) {};
        \draw[fill=none, inner sep = 0.05cm] (5.5,3.75)  node (z2) {};

        \draw[arrowtipinmiddle, line width=0.02500cm, color=orange, bend left=0] (t1v1) to (w1);
        \draw[arrowtipinmiddle, line width=0.02500cm, color=springgreen, bend left=0] (w2) to (t1v1);

        \draw[arrowtipinmiddle, line width=0.03500cm, color=royalblue, bend left=0] (t1v1) to (t1v2);
        \draw[arrowtipinmiddle, line width=0.03500cm, color=royalblue, bend left=0] (t1v1) to (t1v3);

        \draw[arrowtipinmiddle, line width=0.02500cm, color=purple, bend left=0] (t1v2) to (u1);
        \draw[arrowtipinmiddle, line width=0.02500cm, color=yellow, bend left=0] (u2) to (t1v2);

        \draw[arrowtipinmiddle, line width=0.02500cm, color=pink, bend left=0] (t1v3) to (z1);
        \draw[arrowtipinmiddle, line width=0.02500cm, color=olive, bend left=0] (z2) to (t1v3);

        \node[fill=black, circle, inner sep=1.5pt, label=below:\footnotesize{$v_1$}] (t1v1) at (0,1) {};
        \node[fill=black, circle, inner sep=1.5pt, label=below:\footnotesize{$v_2$}] (t1v2) at (4,2) {};
        \node[fill=black, circle, inner sep=1.5pt, label=below:\footnotesize{$v_3$}] (t1v3) at (4,0) {};
        \draw[fill=none, inner sep = 0.05cm] (-1.5,1.5)  node (w1) {};
        \draw[fill=none, inner sep = 0.05cm] (-1.5,.5)  node (w2) {};

        \draw[fill=none, inner sep = 0.05cm] (5.5,2.25)  node (u1) {};
        \draw[fill=none, inner sep = 0.05cm] (5.5,1.75)  node (u2) {};
        
        \draw[fill=none, inner sep = 0.05cm] (5.5,0.25)  node (z1) {};
        \draw[fill=none, inner sep = 0.05cm] (5.5,-0.25)  node (z2) {};

        \draw[arrowtipinmiddle, line width=0.02500cm, color=orange, bend left=0] (t1v1) to (w1);
        \draw[arrowtipinmiddle, line width=0.02500cm, color=springgreen, bend left=0] (w2) to (t1v1);

        \draw[arrowtipinmiddle, line width=0.03500cm, color=royalblue, bend left=0] (t1v2) to (t1v1);
        \draw[arrowtipinmiddle, line width=0.03500cm, color=royalblue, bend left=0] (t1v3) to (t1v1);

        \draw[arrowtipinmiddle, line width=0.02500cm, color=purple, bend left=0] (t1v2) to (u1);
        \draw[arrowtipinmiddle, line width=0.02500cm, color=yellow, bend left=0] (u2) to (t1v2);

        \draw[arrowtipinmiddle, line width=0.02500cm, color=pink, bend left=0] (t1v3) to (z1);
        \draw[arrowtipinmiddle, line width=0.02500cm, color=olive, bend left=0] (z2) to (t1v3);
        
        \node[fill=black, circle, inner sep=1.5pt, label=below:\footnotesize{$v_1$}] (t2v1) at (10,1) {};
        \node[fill=black, circle, inner sep=1.5pt, label=below:\footnotesize{$v_1$}] (t2v2) at (14,1) {};

        \draw[fill=none, inner sep = 0.05cm] (8.5,1.5)  node (w1) {};
        \draw[fill=none, inner sep = 0.05cm] (8.5,.5)  node (w2) {};
        \draw[fill=none, inner sep = 0.05cm] (14.75,1.5)  node (x1) {};
        \draw[fill=none, inner sep = 0.05cm] (14.75,.5)  node (x2) {};

        \draw[arrowtipinmiddle, line width=0.03500cm, color=royalblue, bend left=30] (t2v1) to (t2v2);
        \draw[arrowtipinmiddle, line width=0.03500cm, color=royalblue, bend right=30] (t2v1) to (t2v2);
        \draw[arrowtipinmiddle, line width=0.02500cm, color=yellow, bend left=0] (t2v1) to (w1);
        \draw[arrowtipinmiddle, line width=0.02500cm, color=orange, bend left=0] (w2) to (t2v1);
        \draw[arrowtipinmiddle, line width=0.02500cm, color=purple, bend left=0] (t2v2) to (x1);
        \draw[arrowtipinmiddle, line width=0.02500cm, color=olive, bend left=0] (x2) to (t2v2);


        \node[fill=black, circle, inner sep=1.5pt, label=below:\footnotesize{$v_1$}] (t2v1) at (10,5) {};
        \node[fill=black, circle, inner sep=1.5pt, label=below:\footnotesize{$v_1$}] (t2v2) at (14,5) {};

        \draw[fill=none, inner sep = 0.05cm] (8.5,5.5)  node (w1) {};
        \draw[fill=none, inner sep = 0.05cm] (8.5,4.5)  node (w2) {};
        \draw[fill=none, inner sep = 0.05cm] (14.75,5.5)  node (x1) {};
        \draw[fill=none, inner sep = 0.05cm] (14.75,4.5)  node (x2) {};

        \draw[arrowtipinmiddle, line width=0.03500cm, color=royalblue, bend left=30] (t2v2) to (t2v1);
        \draw[arrowtipinmiddle, line width=0.03500cm, color=royalblue, bend right=30] (t2v2) to (t2v1);
        \draw[arrowtipinmiddle, line width=0.02500cm, color=yellow, bend left=0] (t2v1) to (w1);
        \draw[arrowtipinmiddle, line width=0.02500cm, color=orange, bend left=0] (w2) to (t2v1);
        \draw[arrowtipinmiddle, line width=0.02500cm, color=purple, bend left=0] (t2v2) to (x1);
        \draw[arrowtipinmiddle, line width=0.02500cm, color=olive, bend left=0] (x2) to (t2v2);
        \end{tikzpicture}
        \caption{Kinds of folds: folds shown on the right cannot occur if $\Pi$ is orientable.}
        \label{fig:fold-example}
    \end{figure}
    
    \item {\sf no-pattern}.  A pattern in a taiko is an unordered pair of colors of horizontal edges together with their orientations, that occur incident at a common vertex \(v\) in \(\mathsf{L}_{AB} = \mathsf{L}_A \cup \mathsf{L}_B\). That is, a pattern is a pair of the form \(\{(c_1, d_1), (c_2, d_2)\}\), where \(c_1\) and \(c_2\) are colors, and \(d_1, d_2 \in \{\text{in}, \text{out}\}\). The no-pattern condition says that a given product structure has no repeating patterns. That is, each pattern occurs at most once in \(\mathsf{L}_{AB}\).

    \item {\sf triple-girth}. 
    The condition $\mathsf{girth}(p, q)$ requires that $\girth(\mathsf{L}_{AB}) \geq p$ and $\text{half-girth}(\mathsf{L}_1) \geq q$. The $\mathsf{triple-girth}$ condition is satisfied if $\girth(p, q)$ equals one of the following pairs: $(p, q) = (6, 3)$, $(p, q) = (4, 4)$, or $(p, q) = (3, 6)$.

\end{tconds}

Note that repetition of pattern at at least two distinct vertices generated a cycle of girth $4$ in $\mathsf{L}_1$ and hence $\text{half-girth}(\mathsf{L}_1) \le 2$. Therefore, it is sufficient to consider conditions $\mathsf{T_1}, \mathsf{T_2}$ and $\mathsf{T_4}$ and omit $\mathsf{T_3}$.
 
\begin{theorem}[Theorem 27 in \cite{Mineyev2024}]\label{thm:27}
For the conjunctions:
\begin{enumerate}
    \item[(1)] {\sf orientation} and $\sf{girth}(6, 3)$,
    \item[(1')] {\sf orientation}, {\sf no-fold}, {\sf no-pattern}, and $\sf{girth}(\mathsf{L}_{AB}) \geq 6$,
    \item[(2)] {\sf orientation} and $\sf{girth}(6, 3)(4, 4)(3, 6)$,
    \item[(2')] {\sf orientation}, \sf no-fold, and $\sf{girth}(6, 3)(4, 4)(3, 6)$
\end{enumerate}
the following implications hold: 
\(
(1) \iff (1'),  (2) \iff (2'), (1) \implies (2) \implies (3).
\)

If a product structure $\Pi$ of size $(m, n)$ satisfies at least one of the conjunctions $(1)$, $(1')$, $(2)$, $(2')$, then $\Pi$ is non-degenerate, and both universal groups $G_\Pi$ and $\bar{G}_\Pi$ are torsion-free. 

In particular, if $m \geq 2$ and $n \geq 2$, then the associated elements $a_\Pi$ and $b_\Pi$ in $\mathbb{Z}_2 \bar{G}_\Pi$ provide a counterexample to the unit conjecture when $mn$ is odd, and a counterexample to the zero-divisor conjecture when $mn$ is even. If, in addition, the product structure admits a signature, then the associated elements $a_\Pi$ and $b_\Pi$ in $R \bar{G}_\Pi$ give such counterexamples over any ring $R$ with unity.
\end{theorem}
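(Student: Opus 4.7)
The plan is to decompose the theorem into three natural blocks and attack them in sequence: the combinatorial equivalences \((1)\Leftrightarrow(1')\) and \((2)\Leftrightarrow(2')\); the implication chain \((1)\Rightarrow(2)\Rightarrow(3)\); and the geometric/group-theoretic consequences (non-degeneracy of \(\Pi\), torsion-freeness of \(G_\Pi\) and \(\bar G_\Pi\), and the resulting counterexamples to the Kaplansky conjectures).

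For the equivalences I would translate \textsf{no-fold} and \textsf{no-pattern} directly into short-cycle exclusions arising from the combinatorial construction of the middle link given in the excerpt. Each directed horizontal edge \(e=(x,y)\) of color \(i\) contributes the two edges \(\{x,(i,\mathrm{out})\}\) and \(\{y,(i,\mathrm{in})\}\) of \(\mathsf L_1\), so any repetition of a pattern at two distinct vertices produces a \(4\)-cycle in \(\mathsf L_1\)---exactly the remark recorded just before the theorem---and a fold, analyzed through the same mechanism together with the adjacent cell, forces an even shorter closure. Conversely, every short cycle in \(\mathsf L_1\) that passes through a middle vertex can be read off as either a fold or a repeated pattern. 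Combined with \(\girth(\mathsf L_{AB})\ge 6\) this yields \((1)\Leftrightarrow(1')\). The identical translation, now carried out under \(\mathsf{triple\text{-}girth}\) rather than the single pair \((6,3)\), gives \((2)\Leftrightarrow(2')\); the reason \textsf{no-pattern} does not appear explicitly in \((2')\) is that it is absorbed into the disjunction \((6,3)(4,4)(3,6)\).

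The implication \((1)\Rightarrow(2)\) is immediate: \(\mathsf{triple\text{-}girth}\) is a disjunction whose first branch is exactly \((6,3)\). The remaining implication \((2)\Rightarrow(3)\), where \((3)\) denotes the weaker downstream conclusion used later in the paper (most naturally orientability together with non-degeneracy), is a direct combinatorial check, since each of the four conjunctions already contains \textsf{orientation}. The substantive geometric content is then that each of the four conjunctions forces the piecewise-Euclidean \(2\)-complex \(Y_\Pi\) from \cite{Mineyev2024} to be non-positively curved. Here \(Y_\Pi\) has three distinguished vertices \(x_A,x_B,x_1\) with links \(\mathsf L_A,\mathsf L_B,\mathsf L_1\); the three girth pairs \((6,3)\), \((4,4)\), \((3,6)\) correspond to three choices of regular Euclidean polygon for the \(2\)-cells, each producing different vertex-angles at \(x_A,x_B,x_1\) and hence a different Gromov link-length requirement on the three links. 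Under any of the four conjunctions, at least one of these choices makes Gromov's link condition hold at every vertex, so the universal cover \(\widetilde Y_\Pi\) is \(\mathrm{CAT}(0)\); the universal groups \(G_\Pi,\bar G_\Pi\) then act properly and cocompactly on \(\widetilde Y_\Pi\), and standard CAT(0) theory (a torsion element would fix a point, contradicting freeness of the action on vertices) delivers torsion-freeness.

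The counterexample clause reduces to a direct computation. By construction of \(G_\Pi\) from the cells of \(P\), one has \(a_\Pi b_\Pi=\sum_{i,j}a_i b_j\) with each \(2\)-cell of \(P\) contributing a canceling pair in characteristic \(2\); the sum evaluates to \(0\) when \(P\) is even and to \(1\) when \(P\) is odd, because the unique \(1\)-cell then contributes the identity. Non-degeneracy of \(\Pi\), together with the injectivity of the natural map from \(A\sqcup B\) into \(\bar G_\Pi\), ensures that \(a_\Pi\) and \(b_\Pi\) are non-trivial in \(\mathbb F_2[\bar G_\Pi]\), producing a non-trivial zero-divisor when \(mn\) is even and a non-trivial unit pair when \(mn\) is odd; the existence of a signature lets the same identity be lifted over \(R[\bar G_\Pi]\) for any unital ring \(R\). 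The main obstacle throughout is the geometric step: correctly identifying, for each triple-girth regime, the right piecewise-Euclidean metric on \(Y_\Pi\) so that Gromov's link condition can be checked simultaneously at \(x_A\), \(x_B\), and \(x_1\), and arranging the universal groups so that both non-degeneracy and torsion-freeness can be extracted from the same CAT(0) action.
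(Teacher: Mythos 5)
You are attempting to prove a result that the present paper does not prove: the statement is labelled ``Theorem 27 in \cite{Mineyev2024}'' and is imported from that reference as a black box, so there is no proof in this paper to compare your argument against. Your sketch is a plausible reconstruction of the general architecture such a proof might have --- translating \textsf{no-fold} and \textsf{no-pattern} into short-cycle prohibitions in $\mathsf L_1$, reading the three girth pairs $(6,3)$, $(4,4)$, $(3,6)$ as Gromov link-angle requirements for three different piecewise-Euclidean metrics on $Y_\Pi$, deducing that the universal cover is CAT$(0)$ and hence that the universal groups are torsion-free, and then computing $a_\Pi b_\Pi$ over $\mathbb F_2$ cell by cell. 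But the sketch cannot be carried out, or even seriously checked, from the material available here: the definitions of non-degeneracy, of the universal groups $G_\Pi$ and $\bar G_\Pi$ and their actions, of the elements $a_\Pi$ and $b_\Pi$, of a signature, and of the piecewise-Euclidean structure on $Y_\Pi$ are all deferred to \cite{Mineyev2024} and never unpacked in this paper, which only records that $Y_\Pi$ has three vertices with links $\mathsf L_A$, $\mathsf L_B$, $\mathsf L_1$.

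A further sign that this is an imported statement rather than a proved one: the implication chain ``$(1)\Rightarrow(2)\Rightarrow(3)$'' references a condition $(3)$ that is not defined anywhere in this paper (it is a remnant of the numbering in the source theorem). Your gloss of $(3)$ as ``orientability together with non-degeneracy'' is a guess, and while it may well be right, you have no way to verify it here. Your remark that a fold ``forces an even shorter closure'' also needs justification: two same-colored, same-direction horizontal edges at a vertex $v$ contribute the \emph{same} edge $\{v,(i,\mathrm{out})\}$ (or $\{v,(i,\mathrm{in})\}$) to the simple graph $\mathsf L_1$ rather than a parallel edge, so the forbidden short cycle must come from the companion edges on the other side of the $2$-cells, not from $v$ itself --- a step your argument elides. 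If you want to prove this theorem rather than merely rephrase it, you must work from \cite{Mineyev2024}, where the objects in question are actually constructed.
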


By Theorem~\ref{thm:27}, it is enough to find an orientable product structure \(\Pi\) satisfying \(\mathsf{T_2}\) and \(\mathsf{T_4}\).

\subsection{Isomorphisms}

Let $S_m$ and $S_n$ be the permutation groups on $m$ and $n$ letters. For $\sigma\in S_m$ and $\tau\in S_n$, define the action
\[
(\sigma,\tau)\cdot(a_i,b_j):=(a_{\sigma(i)},\,b_{\tau(j)}),
\]
and extend the action to edge sets $E\subseteq A\times B$ by
\[
(\sigma,\tau)\cdot E \ :=\ \{(\sigma,\tau)\cdot e : e\in E\}\subseteq A\times B,
\]
and to a subpartition $P=\{C_i\}_{i\in I}$ by
\[
(\sigma,\tau)\cdot P \ :=\ \{\,(\sigma,\tau)\cdot C_i : i\in I\,\}.
\]

\begin{definition}[Isomorphic subpartitions]\label{def:iso-subpartition}
Two subpartitions $P,P'\subseteq A\times B$ are  {isomorphic}, denoted \(P\cong P'\) if there exists
$(\sigma,\tau)\in S_m\times S_n$ with $P'=(\sigma,\tau)\cdot P$.
\end{definition}

Let $\Pi=(A,B,P)$ be an oriented product structure and let $E_{AB}(\Pi)$ be the fixed set of horizontal edges in $\mathsf L_A\sqcup\mathsf L_B$ (Definition~\ref{def:Hab}). Let $k_\Pi$ be the number of equivalence classes of $E_{AB}(\Pi)$, and let
\[
\kappa_\Pi:\ E_{AB}(\Pi)\longrightarrow \{1,\dots,k_\Pi\}
\]
be the associated color map (we use ``$i$-th color" and ``$i$-th class" interchangeably).

\begin{definition}[Isomorphism of taikos]\label{def:iso-taiko-strong}
Let \(\Pi=(A,B,P)\) and \(\Pi'=(A,B,P')\) be product substructures with taikos
\(\mathcal T(\Pi)\), \(\mathcal T(\Pi')\).
Let \(\kappa_\Pi:E_{AB}(\Pi)\to C_\Pi\) and \(\kappa_{\Pi'}:E_{AB}(\Pi')\to C_{\Pi'}\) be the
color maps onto finite color sets \(C_\Pi, C_{\Pi'}\).

We say \(\mathcal T(\Pi)\) and \(\mathcal T(\Pi')\) are  {isomorphic}, written
\(\mathcal T(\Pi)\cong \mathcal T(\Pi')\), if there exist \((\sigma,\tau)\in S_m\times S_n\),
a bijection of color sets \(\rho:C_\Pi\to C_{\Pi'}\), and a function
\(\varepsilon:C_\Pi\to\{\pm1\}\) such that, with
\[
f:A\sqcup B\to A\sqcup B,\qquad f(a_i):=a_{\sigma(i)},\ \ f(b_j):=b_{\tau(j)},
\]
the following hold:
\begin{enumerate}[label=(\roman*)]
  \item  {Vertical part:} \((\sigma,\tau)\cdot P=P'\).
  \item  {Horizontal part with possible flips:} for every horizontal oriented edge
  \(e=(u,v)\in E_{AB}(\Pi)\) of color \(c=\kappa_\Pi(e)\),
  set
  \[
    e' \ :=\
    \begin{cases}
      (f(u),f(v)), & \varepsilon(c)=+1,\\
      (f(v),f(u)), & \varepsilon(c)=-1.
    \end{cases}
  \]
  Then \(e'\in E_{AB}(\Pi')\) and \(\kappa_{\Pi'}(e')=\rho(c)\).
\end{enumerate}
\end{definition}

\begin{definition}[Isomorphism of middle links]\label{def:iso-middle-link}
Let \(\Pi\) and \(\Pi'\) be product substructures with taikos \(\mathcal T(\Pi)\), \(\mathcal T(\Pi')\)
and middle links \(\mathsf L_1(\Pi)\), \(\mathsf L_1(\Pi')\).
Let \(C_\Pi\) (resp.\ \(C_{\Pi'}\)) be the color set of horizontal classes for \(\Pi\) (resp.\ \(\Pi'\)),
and set the middle vertices
\[
M(\Pi):= C_{\Pi} \times \{\mathrm{in}, \mathrm{out}\},\qquad
M(\Pi'):= C_{\Pi'} \times \{\mathrm{in}, \mathrm{out}\}.
\]
We say \(\mathsf L_1(\Pi)\) and \(\mathsf L_1(\Pi')\) are  {isomorphic}, written
\(\mathsf L_1(\Pi)\cong \mathsf L_1(\Pi')\), if there exist \((\sigma,\tau)\in S_m\times S_n\),
a bijection of color sets \(\rho:C_\Pi\to C_{\Pi'}\), and a function
\(\varepsilon:C_\Pi\to\{\pm1\}\) such that:

\begin{enumerate}[label=(\roman*)]
  \item \text{Taikos are isomorphic with flips:}
  \(\mathcal T(\Pi)\cong \mathcal T(\Pi')\) via \((\sigma,\tau,\rho,\varepsilon)\) in the sense of
  Definition~\ref{def:iso-taiko-strong}; i.e., with vertex relabelling
  \(f(a_i)=a_{\sigma(i)}\), \(f(b_j)=b_{\tau(j)}\), color relabelling \(i\mapsto \rho(i)\),
  and for each color \(c\) all horizontal edges of color \(c\) are either kept or reversed
  according to \(\varepsilon(c)\in\{\pm1\}\).

  \item \text{Extend \(f\) to the middle vertices compatibly with the flips:}
  Let \(\iota\) be the swap on \(\{\mathrm{in},\mathrm{out}\}\) (i.e.\ \(\iota(\mathrm{in})=\mathrm{out}\),
  \(\iota(\mathrm{out})=\mathrm{in}\)).
  Define
  \[
  f\big((i,s)\big)\ :=\
  \begin{cases}
    (\rho(i),\,s), & \varepsilon(i)=+1,\\
    (\rho(i),\,\iota(s)), & \varepsilon(i)=-1,
  \end{cases}
  \qquad s\in\{\mathrm{in},\mathrm{out}\}.
  \]
  Then the bijection \(f: A\sqcup B\sqcup M(\Pi)\to A\sqcup B\sqcup M(\Pi')\) induces a graph isomorphism:
  \[
    \{u,v\}\in E\big(\mathsf L_1(\Pi)\big)\quad\Longleftrightarrow\quad
    \{f(u),f(v)\}\in E\big(\mathsf L_1(\Pi')\big).
  \]
\end{enumerate}
\end{definition}


\section{Our approach}
Throughout this section, let \( A := \{a_1, \ldots, a_m\} \) and \( B := \{b_1, \ldots, b_n\} \) denote two fixed finite sets. 

\subsection{Decreasing Conditions}
  The goal is to search for counterexamples of the conjectures by building a product structure recursively starting from the empty product substructure. This subsection outlines how the combinatorial conditions facilitate the recursive process.
  
    \begin{lemma}\label{lemma:girthdecreasing}
    Let $\mathcal{G}_1$ be a subgraph of $\mathcal{G}_2$, then $\mathrm{girth}(\mathcal{G}_1) \ge \mathrm{girth}(\mathcal{G}_2)$.
    \end{lemma}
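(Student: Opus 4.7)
The plan is to unwind the definition of girth and use the fact that every cycle in the subgraph is automatically a cycle in the ambient graph. More precisely, let $\mathcal{C}(\mathcal{G})$ denote the set of (lengths of) cycles in a graph $\mathcal{G}$, so that $\girth(\mathcal{G})=\inf \mathcal{C}(\mathcal{G})$ with the convention $\inf\emptyset=\infty$ from Definition~\ref{def:girth}. Since $\mathcal{G}_1$ is a subgraph of $\mathcal{G}_2$, any sequence $v_0,v_1,\ldots,v_k=v_0$ that is a closed walk in $\mathcal{G}_1$ uses only vertices of $V(\mathcal{G}_1)\subseteq V(\mathcal{G}_2)$ and edges of $E(\mathcal{G}_1)\subseteq E(\mathcal{G}_2)$, so it is also a closed walk in $\mathcal{G}_2$ of the same length; the distinctness conditions for being a cycle (Definition~\ref{def:walk-path-cycle}) are intrinsic to the vertex sequence and therefore transfer verbatim. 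Hence $\mathcal{C}(\mathcal{G}_1)\subseteq \mathcal{C}(\mathcal{G}_2)$, and taking infima of a subset of $\mathbb{N}\cup\{\infty\}$ yields $\inf \mathcal{C}(\mathcal{G}_1)\ge \inf \mathcal{C}(\mathcal{G}_2)$, i.e.\ $\girth(\mathcal{G}_1)\ge \girth(\mathcal{G}_2)$.

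Two minor cases deserve a sentence each. If $\mathcal{G}_1$ is acyclic, then $\girth(\mathcal{G}_1)=\infty$ and the inequality holds trivially. If $\mathcal{G}_1$ and $\mathcal{G}_2$ are directed, Definition~\ref{def:girth} passes to underlying undirected graphs $U(\mathcal{G}_1)\subseteq U(\mathcal{G}_2)$, and the undirected case above applies; no separate argument is needed.

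I do not expect any real obstacle here: the statement is an immediate monotonicity property of the infimum, and the only thing to verify carefully is that ``subgraph'' is interpreted consistently with the notion of cycle used in Definition~\ref{def:walk-path-cycle} so that cycles of $\mathcal{G}_1$ genuinely are cycles of $\mathcal{G}_2$. The proof should be two or three lines in the paper.
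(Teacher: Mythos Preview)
Your proof is correct and follows essentially the same approach as the paper: both argue that any cycle in $\mathcal{G}_1$ is a cycle in $\mathcal{G}_2$, so the minimum cycle length can only be larger for the subgraph, with the acyclic case handled by the $\infty$ convention. Your version is slightly more formal in phrasing it via $\mathcal{C}(\mathcal{G}_1)\subseteq\mathcal{C}(\mathcal{G}_2)$ and monotonicity of the infimum, and you additionally note the directed case via underlying undirected graphs, but the content is identical.
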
  
    \begin{proof} 
    If $\mathcal{G}_1$ has no cycles, we will say that $\mathrm{girth}(\mathcal{G}_1) = \infty$. The girth of $\mathcal{G}_2$ is finite (if it contains a cycle) and infinite otherwise. If $\mathcal{G}_1$ contains cycles, any nontrivial edge-loop in $\mathcal{G}_1$ is a nontrivial edge-loop in $\mathcal{G}_2$. Hence minimum of their lengths is larger for $\mathcal{G}_1$.    
    In both cases, we have shown that $\text{girth}(\mathcal{G}_1) \ge \text{girth}(\mathcal{G}_2)$.
    \end{proof}
\begin{corollary}\label{cor:decreasing_girth}
    Let $\Pi = (A,B, P)$ and $\Pi' = (A,B, P')$ be two product substructrues such that $P \subseteq P'$. Let $\mathsf{L}_{AB}$ and $\mathsf{L}_{AB}'$ be horizontal graphs associated to $\Pi$ and $\Pi'$, respectively, as defined in \eqref{eqn:defLAB}. Then, $\mathrm{girth}(\mathsf{L}_{AB}') \le \mathrm{girth}(\mathsf{L}_{AB})$. If $\mathsf{L}_1$ and $\mathsf{L}_1'$ are middle links for $\Pi$ and $\Pi'$, respectively, then $\hgirth(\mathsf{L}_1') \le \hgirth(\mathsf{L}_1)$.
    \end{corollary}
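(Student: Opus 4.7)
The plan is to apply Lemma~\ref{lemma:girthdecreasing} to each pair of graphs, after setting up the appropriate inclusion or quotient relation. For the first inequality the argument is immediate: since $P\subseteq P'$, every $2$-cell of $P$ is a $2$-cell of $P'$, so every horizontal edge of $\Pi$ is also a horizontal edge of $\Pi'$, and (the underlying undirected graph of) $\mathsf L_{AB}$ is a subgraph of $\mathsf L_{AB}'$. Lemma~\ref{lemma:girthdecreasing} then yields $\girth(\mathsf L_{AB}')\le \girth(\mathsf L_{AB})$.

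For the middle links I would reduce to the one-cell case by induction on $|P'\setminus P|$, so it suffices to track how $\mathsf L_1$ changes when a single cell $C=\{(a,b),(a',b')\}$ is appended. After fixing any orientation of $E_{AB}(\Pi')$ and restricting it to $E_{AB}(\Pi)$ (which is allowed by the orientation-independence remark following the construction of $\mathsf L_1$), exactly one of three possibilities occurs, according to how the horizontal pair $\{a,a'\},\{b,b'\}$ meets the equivalence relation $\sim$ on $\bar E_{AB}(\Pi)$: (i) at least one of $\{a,a'\},\{b,b'\}$ is a new horizontal edge, in which case $\mathsf L_1$ sits inside $\mathsf L_1'$ as a literal subgraph, since the new cell can only contribute additional middle vertices and additional edges; (ii) both horizontal edges are already present but lie in different $\sim$-classes $c_1\neq c_2$, in which case $\mathsf L_1'$ is obtained from $\mathsf L_1$ by identifying $(c_1,s)$ with $(c_2,s)$ for each $s\in\{\mathrm{in},\mathrm{out}\}$; or (iii) both horizontal edges already lie in the same $\sim$-class, and $\mathsf L_1$ is unchanged. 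Cases (i) and (iii) give the desired inequality directly from Lemma~\ref{lemma:girthdecreasing}.

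Case (ii) is the interesting one, and the crucial observation is that the two middle vertices $(c_1,s)$ and $(c_2,s)$ being identified are non-adjacent in $\mathsf L_1$: every edge of $\mathsf L_1$ joins an $A\sqcup B$-vertex to a middle vertex, so the transition $\mathsf L_1\to \mathsf L_1'$ is precisely the quotient by a vertex identification of two non-adjacent vertices. Any cycle of $\mathsf L_1$ avoiding one of the two identified vertices survives verbatim in $\mathsf L_1'$, while a cycle that passes through both produces a closed walk through the identified vertex, from which a strictly shorter cycle can be extracted by splitting the closed walk at the identified vertex into two shorter closed sub-walks and invoking the bipartite structure of $\mathsf L_1$ together with the pairwise distinctness of the $A\sqcup B$-vertices along the original cycle. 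In all sub-cases $\girth(\mathsf L_1')\le \girth(\mathsf L_1)$, and dividing by $2$ gives $\hgirth(\mathsf L_1')\le \hgirth(\mathsf L_1)$.

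The main obstacle I anticipate is the cycle-extraction argument in case (ii): one must rule out the degenerate scenario in which identifying the two middle vertices produces only a pair of length-$2$ backtracks rather than a genuine shorter cycle. Bipartiteness of $\mathsf L_1$ (forcing all cycle lengths to be even) together with the fact that the $A\sqcup B$-vertices along the original cycle are all distinct guarantees that at least one of the split sub-walks has length at least $4$ and, being closed and supported on distinct $A\sqcup B$-vertices, contains a genuine cycle in $\mathsf L_1'$. Every other step is bookkeeping on the taiko and a direct invocation of Lemma~\ref{lemma:girthdecreasing}.
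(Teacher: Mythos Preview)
Your treatment of $\mathsf L_{AB}$ matches the paper exactly. For $\mathsf L_1$, the paper's proof is far more cursory than yours: it simply asserts that $\mathrm{edges}(\mathsf L_1)\subseteq\mathrm{edges}(\mathsf L_1')$, hence $\mathsf L_1$ is a subgraph of $\mathsf L_1'$, and invokes Lemma~\ref{lemma:girthdecreasing}. You have correctly noticed that this assertion is not literally true at the level of the combinatorial construction: when adding a cell whose two horizontal edges lie in distinct pre-existing colour classes, those classes merge, the middle vertex set shrinks, and $\mathsf L_1'$ is a quotient of $\mathsf L_1$ rather than a supergraph. So your case analysis is addressing a genuine subtlety the paper glosses over.

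That said, your argument in case~(ii) has a real gap. You claim that after splitting a cycle at the identified middle vertex, ``at least one of the split sub-walks has length at least~$4$''. This fails precisely for a $4$-cycle of the form $(c_1,s)-x-(c_2,s)-y-(c_1,s)$ with $x,y\in A\sqcup B$ distinct: splitting yields two closed walks of length~$2$ each, both of which collapse to single edges in the simple quotient, and no shorter cycle is extracted. Bipartiteness and distinctness of the $A\sqcup B$-vertices do not rescue you here, since both sub-walks visit only one $A\sqcup B$-vertex. You also need to account for the fact that case~(ii) performs \emph{two} simultaneous identifications, $(c_1,\mathrm{in})\sim(c_2,\mathrm{in})$ and $(c_1,\mathrm{out})\sim(c_2,\mathrm{out})$, not one; a cycle in $\mathsf L_1$ may thread through any subset of these four vertices, and your splitting argument only handles one pair at a time.

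To close the gap you would need either a structural reason why such a $4$-cycle forces another short cycle to survive the quotient (for instance by exploiting that every colour class contributes edges on both the $A$- and $B$-sides and tracking the companion ``out'' identifications), or an argument that under orientability of $\Pi'$ the problematic configuration cannot arise. Neither is supplied, and neither is obvious.
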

    \begin{proof}
        Since edges$(\mathsf{L}_A) \subseteq \mathrm{edges}(\mathsf{L}_A')$, $\mathrm{edges}(\mathsf{L}_B) \subseteq \mathrm{edges}(\mathsf{L}_B')$ and $\mathrm{edges}(\mathsf{L}_1) \subseteq \mathrm{edges}(\mathsf{L}_1')$, $\mathsf{L}_{AB}$ is a subgraph of $\mathsf{L}_{AB}'$ and $\mathsf{L}_1$ is a subgraph of $\mathsf{L}_1'$. The result follows from Lemma \ref{lemma:girthdecreasing}. 
    \end{proof}
        
    
    This lemma shows that as we create a larger subpartition of $A\times B$ by adding more cells, we decrease the girth of the horizontal graph and of the middle link.
    
   \begin{definition}[Decreasing condition]\label{def:decreasing_condition}
    A condition \(T\) is said to be \textit{decreasing} if the following holds:  Take arbitrary \(\Pi = (A, B, P)\) and \(\Pi' = (A, B, P')\) with  \(P \subseteq P'\). If \(T\) fails for \(\Pi\), then \(T\) must also fail for \(\Pi'\). Equivalently, \(T\) satisfies the \textit{hereditary property} in reverse: If \(T\) holds for \(\Pi'\), it must also hold for \(\Pi\).
    \end{definition}
    
    \begin{lemma}\label{lemma:hereditary}
    The conditions $\mathsf{T_1} - \mathsf{T_4}$ are decreasing conditions.
    \end{lemma}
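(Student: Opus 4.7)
I would handle the four conditions one at a time, exploiting a single monotonicity principle: enlarging the subpartition from \(P\) to \(P'\) can only add cells, add horizontal edges, and coarsen (never refine) the color equivalence classes on \(\Bar E_{AB}\). Concretely, \(P\subseteq P'\) gives \(\Bar E_{AB}(\Pi)\subseteq \Bar E_{AB}(\Pi')\) and the equivalence relation generating colors in \(\Pi\) is contained in that of \(\Pi'\), so any two horizontal edges equal in color under \(\kappa_\Pi\) remain equal in color under \(\kappa_{\Pi'}\) (possibly merging with more classes). This is the main tool, and nothing beyond it and \Cref{cor:decreasing_girth} is needed.

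For \(\mathsf{T}_1\), the remark already recorded just after \Cref{def:orientablepi} is the statement: any orientation \(O'\) of \(\Pi'\) restricts to one on \(\Pi\) since every 2-cell of \(P\) is a 2-cell of \(P'\) whose horizontal edges are in \(\Bar E_{AB}(\Pi)\), so failure of \(\mathsf{T}_1\) for \(\Pi\) forces failure for \(\Pi'\). For \(\mathsf{T}_2\), I would fix any orientation \(O'\) of \(\Pi'\) restricting to an orientation \(O\) of \(\Pi\) (if \(\Pi'\) is not orientable there is nothing to prove). A fold at \(v\) consists of two horizontal edges, incident to \(v\), with the same direction at \(v\) and the same color; under the monotonicity principle all three attributes are preserved when passing to \(\Pi'\), so the fold persists.

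For \(\mathsf{T}_3\), the argument is structurally identical: a repeated pattern \(\{(c_1,d_1),(c_2,d_2)\}\) at two distinct vertices \(v_1,v_2\) in \(\mathcal T(\Pi)\) is witnessed by four horizontal edges with prescribed endpoints, directions, and colors. All four still exist in \(\mathcal T(\Pi')\) with unchanged endpoints and directions, and the two color-equalities at each vertex and across the two vertices continue to hold in \(\Pi'\); the color classes may further merge, but this can only create additional coincidences, not remove the old ones. For \(\mathsf{T}_4\), the triple-girth condition is the disjunction of \(\girth(\mathsf L_{AB})\ge p\) and \(\hgirth(\mathsf L_1)\ge q\) over the three pairs \((6,3),(4,4),(3,6)\). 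If all three disjuncts fail for \(\Pi\), then \Cref{cor:decreasing_girth} (applied to each pair separately) gives \(\girth(\mathsf L_{AB}')\le\girth(\mathsf L_{AB})\) and \(\hgirth(\mathsf L_1')\le\hgirth(\mathsf L_1)\), so the corresponding strict inequalities persist and \(\mathsf{T}_4\) also fails for \(\Pi'\).

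The only point that requires a moment of care is the color-class bookkeeping for \(\mathsf{T}_2\) and \(\mathsf{T}_3\): the coloring is not fixed once and for all but is intrinsic to the subpartition, and one must verify that color labels in \(\Pi\) and \(\Pi'\) are comparable in the correct direction. Once the monotonicity ``classes only merge'' is stated cleanly at the start, each of the four cases is a short verification, and I do not expect any genuine obstacle; the lemma is essentially a bookkeeping corollary of \Cref{cor:decreasing_girth} together with the orientability remark following \Cref{def:orientablepi}.
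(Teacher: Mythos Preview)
Your proposal is correct and takes essentially the same approach as the paper: restrict an orientation of \(\Pi'\) to \(\Pi\) for \(\mathsf{T}_1\), observe that folds and repeated patterns in \(\Pi\) persist in \(\Pi'\) for \(\mathsf{T}_2\)--\(\mathsf{T}_3\), and invoke \Cref{cor:decreasing_girth} for \(\mathsf{T}_4\). Your explicit treatment of the color-merging monotonicity (classes can only coarsen under \(P\subseteq P'\)) is a welcome clarification that the paper's proof leaves implicit, but the underlying argument is the same.
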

    \begin{proof}
        Let $\Pi = (A, B, P)$ and $\Pi' = (A, B, P')$ be two partitions such that $P \subseteq P'$.

    $\mathsf{T_1}$, {\sf orientation}: Assume that $\Pi'$ satisfies orientation. This means there exists an orientation function $O$ (as defined in Definition \eqref{def:orientation}) that provides a consistent orientation for the partition $P'$. Since \(P \subseteq P'\), the orientation function \(O\), which orients the elements of \(P'\), is orients the elements of \(P\), as all cells in \(P\) are contained within \(P'\). 
    
    Thus, if $\mathsf{T_1}$ is satisfied by $\Pi'$, it must also be satisfied by $\Pi$, because the orientation function \(O\) remains valid for the smaller partition \(P\). Conversely, if $\mathsf{T_1}$ is not satisfied by $\Pi$, it implies that no orientation function exists for \(P\). Since \(P \subseteq P'\), the orientation function \(O\) cannot exist for \(P'\) either.
    

    $\mathsf{T_2}$, {\sf no-fold}: If there is no fold in $\Pi'$ then since $P \subseteq P'$, there should be no fold in $\Pi$.

    $\mathsf{T_3}$, {\sf no-pattern}: Assume that $\Pi'$ satisfies the no-pattern condition $\mathsf{T_3}$, meaning no repeating pattern occurs in the partition $P'$. Since $P \subseteq P'$, any pattern occurring in $P$ is also a pattern in $P'$. If there were a repeating pattern in $P$, then that same repeating pattern would also occur in $P'$, as all edges and vertices in $P$ are also present in $P'$. This would contradict the assumption that $\Pi'$ satisfies the no-pattern condition. Therefore, no repeating pattern can occur in $P$ if $\Pi'$ satisfies $\mathsf{T_3}$.
    
    Conversely, if $\mathsf{T_3}$ is not satisfied by $\Pi$, meaning a repeating pattern occurs in $P$, then this pattern must also be present in $P'$, as $P \subseteq P'$. Hence, $\mathsf{T_3}$ cannot be satisfied by $\Pi'$.

    $\mathsf{T_4}$, {\sf triple-girth}: This follows from Corollary \ref{cor:decreasing_girth}.
\end{proof}

Before adding a $2$-cell to a subpartition $P$, we relabel the cell using a process called Left-Alignment, which ensures the indices conform to a lexicographic order relative to the subpartition $P$. 

\subsection{Left alignment} 

Let $P := \{C_i\}_{i\in I}$ be a subpartition where each $C_i$ is a 2-cell of the form \[
C_i = \{(a_{i_1}, b_{j_1}), (a_{i_2}, b_{j_2})\}
\]
with $1\le i_1, i_2 \le m$ and $1\le j_1, j_2\le n$. Define the following indices:
\begin{equation*}
    i_P := \max_{C_i \in P} \{i_1, i_2 \},\ \ 
    j_P := \max_{C_i \in P}\{j_1, j_2\}
\end{equation*}

with the convention that $i_P = 0$ and $j_P = 0$ when $P = \emptyset$. $i_P$ and $j_P$ track the highest index of vertices that appear in $P$ from $A$ and $B$, respectively.

\begin{convention}[empty interval]\label{convention:empty_interval}
For integers \(u\le v\), write \(\{u,\dots,v\}\) for the set of integers from \(u\) to \(v\),
and set \(\{u,\dots,v\}=\varnothing\) when \(u>v\).
\end{convention}

\begin{definition}[Left alignment of a new cell]\label{def:left-alignment}
Let \(P=\{C_i\}_{i\in I}\) be a subpartition and let
\[
C=\{(a_{i_1},b_{j_1}),\ (a_{i_2},b_{j_2})\}
\]
be a \(2\)-cell disjoint from every cell of \(P\) such that \(i_1 \neq i_2\) and \(j_1 \neq j_2\).
Without loss of generality assume \(i_1<i_2\).
Left alignment is the following process for \(P\) and \(C\):
\begin{enumerate}[label=(\alph*)]
  \item Define the sets of new indices that exceed \(i_P\) and \(j_P\):
  \[
  N_A:=\{\,i\in\{i_1,i_2\}: i>i_P\,\},\qquad
  N_B:=\{\,j\in\{j_1,j_2\}: j>j_P\,\},
  \]
  and set \(r:=|N_A|\in\{0,1,2\}\), \(s:=|N_B|\in\{0,1,2\}\).

  \item Define the target slots using the empty-interval convention~\ref{convention:empty_interval} by
  \[
  T_A:=\{i_P+1,\dots,i_P+r\},\qquad
  T_B:=\{j_P+1,\dots,j_P+s\}.
  \]

  \item Let \(\phi_A:N_A\to T_A\) be the order-preserving bijection where
  \begin{itemize}
    \item if \(r=0\), the assignment is vacuous (equivalently, \(\phi_A\) is the unique bijection \(\varnothing\to\varnothing\));
    \item if \(r=1\), then \(\phi_A\) is the unique bijection \(N_A\to T_A\);
    \item if \(r=2\), then \(\phi_A(i_1)=i_P+1\) and \(\phi_A(i_2)=i_P+2\).
  \end{itemize}

  Define \(\phi_B:N_B\to T_B\) analogously, using the same order \((j_1,j_2)\)
  (that is, do not sort by value). In particular:
  \begin{itemize}
    \item if \(s=0\), the assignment is vacuous;
    \item if \(s=1\), then \(\phi_B\) is the unique bijection \(N_B\to T_B\);
    \item if \(s=2\), then \(\phi_B(j_1)=j_P+1\) and \(\phi_B(j_2)=j_P+2\).
  \end{itemize}

  \item Choose \(\sigma\in S_m\), \(\tau\in S_n\) satisfying
  \begin{enumerate}[label=(\roman*)]
    \item \(\sigma(i)=i\) for \(1\le i\le i_P\), and \(\sigma(i)=\phi_A(i)\) for \(i\in N_A\);
    \item \(\tau(j)=j\) for \(1\le j\le j_P\), and \(\tau(j)=\phi_B(j)\) for \(j\in N_B\).
  \end{enumerate}
\end{enumerate}
The \textit{left-aligned copy} of \(C\) relative to \(P\) is
\[
\mathcal A_P(C):=(\sigma,\tau)\cdot C.
\]
\end{definition}

By construction, the indices in \(\mathcal A_P(C) = \{(a_{\sigma(i_1)}, b_{\tau(j_1)}),(a_{\sigma(i_2)}, b_{\tau(j_2)})\}\) satisfy
\[
1\le {\sigma(i_1), \sigma(i_2)}\le i_P+2, \,\qquad
1\le \tau(j_1), \tau(j_2)\le j_P+2.
\]
This definition is independent of the choice of \((\sigma,\tau)\) subject to the above constraints.

Before proving the isomorphism statement for taikos, we illustrate the left-alignment procedure with a few examples.

\begin{example}\label{ex:left-align-trivial}
    If $i_1, i_2 \le i_P$ and $j_1, j_2 \le j_P$ then $\mathcal{A}(C) = C$ for any partition $P$ since $r = s = 0$, and $\sigma$ and $\tau$ fix first $i_P$ and $j_P$ elements, respectively . If $i_1, i_2 > i_P$ and $j_1, j_2 > j_P$, and we list the cell so that \(i_1<i_2\), then
      \(
        \mathcal A_P(C)=\{(a_{i_P+1},b_{j_P+1}),\ (a_{i_P+2},b_{j_P+2})\}.
      \)
\end{example}

\begin{example}[first iteration, empty subpartition]\label{ex:left-align-first-iteration}
  Let \(P\) be the empty subpartition (so \(i_P=j_P=0\)); see Figure~\ref{fig:alignining-left-process}.
  Let
  \[
    C=\{(a_{i_1},b_{j_1}),\ (a_{i_2},b_{j_2})\},\qquad
    1\le i_1<i_2\le m,\ \ 1\le j_1\neq j_2\le n .
  \]
  In Definition \ref{def:left-alignment}, $N_A = \{i_1, i_2\}, N_B=\{j_1, j_2\}, r=s=2$. To left-align \(C\) relative to \(P\), choose permutations \(\sigma\in S_m\) and \(\tau\in S_n\) with
  \[
    \sigma(1)=i_1,\ \ \sigma(2)=i_2,\qquad
    \tau(1)=j_1,\ \ \tau(2)=j_2 .
  \]
  Then
  \[
    \mathcal A_P(C)=(\sigma,\tau)\cdot C
    =\{(a_{\sigma^{-1}(i_1)},b_{\tau^{-1}(j_1)}),\ (a_{\sigma^{-1}(i_2)},b_{\tau^{-1}(j_2)})\}
    =\{(a_1,b_1),\ (a_2,b_2)\}.
  \]
  The full process is illustrated in Figure~\ref{fig:alignining-left-process}.
\end{example}

\begin{figure}[]
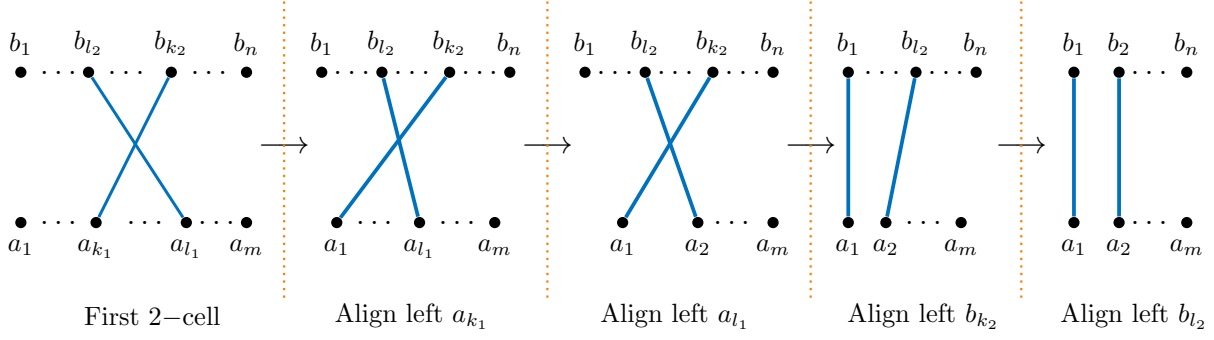

    \centering
    \include{flushleft_S1_process}
    \caption{$P = \emptyset, C = \{(a_{i_1}, b_{j_1}), (a_{i_2}, b_{j_2})\}, \ \mathcal{A}(C, \emptyset) = \{(a_1,b_1),(a_2,b_2)\}$}
    \label{fig:alignining-left-process}
\end{figure}
\begin{lemma}[Left alignment preserves isomorphism of subpartitions]\label{lemma:isomorphic_left_aligned_cells}
Let \(P=\{C_i\}_{i\in I}\) be a subpartition and
\(C\) a \(2\)-cell disjoint from \(P\).
Set \(P':=P\cup\{C\}\) and \( P'':=P\cup\{\mathcal A_P(C)\}\).
Then \(P'\) and \(P''\) are isomorphic subpartitions of \(\mathcal G(A,B)\).
\end{lemma}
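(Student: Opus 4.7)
The plan is to produce a single permutation pair $(\sigma,\tau)\in S_m\times S_n$ that sends $P'$ to $P''$, thereby exhibiting the required isomorphism via Definition~\ref{def:iso-subpartition}. I will use the same $(\sigma,\tau)$ that appears in Definition~\ref{def:left-alignment} and verify that its action fixes every cell already in $P$ while carrying $C$ to $\mathcal A_P(C)$.

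First, I would check that the partial specification of $\sigma$ and $\tau$ in Definition~\ref{def:left-alignment}(d) can be completed to genuine elements of $S_m$ and $S_n$. On $\{1,\dots,i_P\}$ the map $\sigma$ is forced to be the identity, while on $N_A\subseteq\{i_P+1,\dots,m\}$ it is the bijection $\phi_A$ onto $T_A=\{i_P+1,\dots,i_P+r\}$. The specified domain $\{1,\dots,i_P\}\cup N_A$ and image $\{1,\dots,i_P\}\cup T_A$ are subsets of $\{1,\dots,m\}$ of equal cardinality $i_P+r$, so their complements in $\{1,\dots,m\}$ also have equal cardinality; any bijection between these complements extends $\sigma$ to an element of $S_m$. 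The same reasoning produces $\tau\in S_n$. The hypothesis that $C$ is a $2$-cell with $i_1\neq i_2$ and $j_1\neq j_2$ ensures that $\phi_A$ and $\phi_B$ are well-defined bijections in the first place.

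Next, by the definitions $i_P:=\max_{C_i\in P}\{i_1,i_2\}$ and $j_P:=\max_{C_i\in P}\{j_1,j_2\}$, every pair $(a_i,b_j)$ appearing in some cell of $P$ satisfies $i\le i_P$ and $j\le j_P$. Since $\sigma$ fixes $\{1,\dots,i_P\}$ and $\tau$ fixes $\{1,\dots,j_P\}$, we obtain $(\sigma,\tau)\cdot(a_i,b_j)=(a_{\sigma(i)},b_{\tau(j)})=(a_i,b_j)$ for every such edge, and hence $(\sigma,\tau)\cdot C_i=C_i$ for each $C_i\in P$. Therefore $(\sigma,\tau)\cdot P=P$. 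Combined with the defining equality $(\sigma,\tau)\cdot C=\mathcal A_P(C)$, this yields
\[
(\sigma,\tau)\cdot P' \;=\; \bigl((\sigma,\tau)\cdot P\bigr)\cup\{(\sigma,\tau)\cdot C\} \;=\; P\cup\{\mathcal A_P(C)\} \;=\; P'',
\]
which is exactly the condition for $P'\cong P''$ in Definition~\ref{def:iso-subpartition}.

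The argument is essentially bookkeeping: no genuine obstacle is expected. The only step that rewards care is the extension of the partial data in Definition~\ref{def:left-alignment} to honest permutations, and this is immediate from a cardinality count on the complements.
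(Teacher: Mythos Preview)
Your proof is correct and follows essentially the same approach as the paper: take the $(\sigma,\tau)$ from Definition~\ref{def:left-alignment}, observe that it fixes every cell of $P$ (since all indices in $P$ are $\le i_P$ resp.\ $\le j_P$), and use the defining equality $(\sigma,\tau)\cdot C=\mathcal A_P(C)$ to conclude $(\sigma,\tau)\cdot P'=P''$. Your added paragraph verifying that the partial data extend to honest permutations is a welcome bit of care that the paper leaves implicit.
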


\begin{proof}
Take \((\sigma,\tau)\) as in Definition~\ref{def:left-alignment}. By construction, \(\sigma\) fixes every
\(A\)-index \(\le i_P\) and \(\tau\) fixes every \(B\)-index \(\le j_P\); hence each cell of \(P\) is fixed setwise by \((\sigma,\tau)\).
Moreover, \((\sigma,\tau)\cdot C=\mathcal A_P(C)\).
Therefore \((\sigma,\tau)\cdot P'= P''\), so \(P'\) and \( P''\) are isomorphic.
\end{proof}

\begin{corollary}[Canonical left-aligned representative]\label{cor:unique-left-aligned-cell}
There exists a unique left-aligned representative of the cell $C$ up to isomorphism.
\end{corollary}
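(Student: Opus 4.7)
The plan is to derive the corollary directly from Lemma~\ref{lemma:isomorphic_left_aligned_cells} after first verifying that $\mathcal{A}_P(C)$ is itself well-defined as a cell. For the latter, I would take any two pairs $(\sigma,\tau),(\sigma',\tau')\in S_m\times S_n$ satisfying the constraints in Definition~\ref{def:left-alignment} and observe that they must agree on all indices actually occurring in $C$: for each $i\in\{i_1,i_2\}$ either $i\le i_P$ (so $\sigma(i)=\sigma'(i)=i$) or $i\in N_A$ (so $\sigma(i)=\sigma'(i)=\phi_A(i)$), and analogously for $\tau$ on $\{j_1,j_2\}$. Since $(\sigma,\tau)\cdot C$ depends only on $\sigma|_{\{i_1,i_2\}}$ and $\tau|_{\{j_1,j_2\}}$, the cell $\mathcal{A}_P(C)$ is independent of the extension of $(\sigma,\tau)$ off these sets, so we may speak of \emph{the} left-aligned copy.

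Existence of a left-aligned representative is then immediate: by Lemma~\ref{lemma:isomorphic_left_aligned_cells} we have $P\cup\{C\}\cong P\cup\{\mathcal{A}_P(C)\}$, and by construction $\mathcal{A}_P(C)$ uses exactly the smallest admissible new indices $T_A=\{i_P+1,\dots,i_P+r\}$ and $T_B=\{j_P+1,\dots,j_P+s\}$, which is precisely what it means to be left-aligned relative to $P$. For uniqueness up to isomorphism, I would consider any other left-aligned $2$-cell $C^{\ast}$ with $P\cup\{C^{\ast}\}\cong P\cup\{C\}$ via a pair $(\sigma^{\ast},\tau^{\ast})\in S_m\times S_n$ that fixes $P$ setwise. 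Because left-alignment forces both $\mathcal{A}_P(C)$ and $C^{\ast}$ to use the same initial intervals $T_A$ and $T_B$, and because the order-preserving bijection $\phi_A:N_A\to T_A$ together with the pairing-preserving bijection $\phi_B:N_B\to T_B$ (induced by the ordering $i_1<i_2$) are uniquely determined by $C$, the permutation $(\sigma^{\ast},\tau^{\ast})$ must act as the identity on $T_A\cup T_B$, and this forces $C^{\ast}=\mathcal{A}_P(C)$.

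The main obstacle is pinning down what ``up to isomorphism'' means at the level of individual cells: the natural reading is that two candidate left-aligned cells $C^{\ast}_1,C^{\ast}_2$ yield isomorphic enlarged subpartitions $P\cup\{C^{\ast}_i\}$ via a permutation fixing $P$ (as in Definition~\ref{def:iso-subpartition} restricted to stabilizers of $P$). Once that convention is fixed, the uniqueness statement is really just the uniqueness of $\phi_A$ and $\phi_B$ as specified in Definition~\ref{def:left-alignment}, together with the fact that any ambiguity within the indices $\le i_P$ or $\le j_P$ must already fix $\mathcal{A}_P(C)$ setwise since none of its indices lie in the interior of those ranges.
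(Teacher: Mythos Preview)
Your proposal is correct and follows essentially the same approach as the paper: existence via Lemma~\ref{lemma:isomorphic_left_aligned_cells}, and uniqueness via the determinism of $\phi_A$ (forced by the convention $i_1<i_2$) and hence of $\phi_B$. The paper's proof is a two-line version of your second paragraph; your first paragraph (well-definedness of $\mathcal A_P(C)$ independent of the extension of $(\sigma,\tau)$) is something the paper asserts immediately after Definition~\ref{def:left-alignment} rather than inside the corollary, and your third paragraph's attempt to unpack ``up to isomorphism'' goes beyond what the paper actually argues, which really amounts only to the determinism of the procedure.
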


\begin{proof}
Existence follows from Lemma~\ref{lemma:isomorphic_left_aligned_cells}.
For uniqueness, the ordering $i_1 < i_2$ imposes a unique $\phi_A$ and hence, $\phi_B$ defined as in Definition \ref{def:left-alignment}. 
\end{proof}


\begin{lemma}\label{lemma:left-align-isomorphic-partitions}
With the notation of Lemma~\ref{lemma:isomorphic_left_aligned_cells},
let \(\Pi_i=(A,B,P)\), \(\Pi'=(A,B,P')\) and \(\Pi''=(A,B,P'')\).
 Then \(\mathcal T(\Pi')\) and \(\mathcal T(\Pi'')\) are isomorphic taikos.
Moreover, the induced permutation of color classes shows that both have the same number \(k\) of colors.
\end{lemma}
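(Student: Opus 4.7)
The plan is to transport the subpartition isomorphism from Lemma~\ref{lemma:isomorphic_left_aligned_cells} up to the taiko level, using the trivial flip function. Concretely, let $(\sigma,\tau)\in S_m\times S_n$ be the pair supplied by that lemma, so that $(\sigma,\tau)\cdot P'=P''$; set $f(a_i):=a_{\sigma(i)}$ and $f(b_j):=b_{\tau(j)}$, and take $\varepsilon\equiv +1$. This already furnishes item~(i) of Definition~\ref{def:iso-taiko-strong}, so the remaining work is to construct the color bijection $\rho$ and verify item~(ii).

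First I would check that horizontal edges transport correctly: for every $(u,v)\in E_{AB}(\Pi')$ we have $(f(u),f(v))\in E_{AB}(\Pi'')$. This is a direct unpacking of~\eqref{def:Ha} and~\eqref{def:Hb}, since each such edge is witnessed by a $2$-cell of $P'$ whose image under $(\sigma,\tau)$ is a $2$-cell of $P''$ with matched coordinates $f(u)$ and $f(v)$.

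Next I would construct $\rho$ by pushing the cell bijection through the color equivalence relation. Recall that $\{a,a'\}\sim\{b,b'\}$ exactly when some cell of the partition has the form $\{(a,b),(a',b')\}$ or $\{(a,b'),(a',b)\}$. Because $(\sigma,\tau)$ permutes cells of $P'$ onto those of $P''$, the induced relabelling $f$ on $\bar{E}_{AB}(\Pi')$ respects $\sim$ and descends to a bijection $\rho:C_{\Pi'}\to C_{\Pi''}$, whose two-sided inverse is induced by $(\sigma^{-1},\tau^{-1})$. In particular $|C_{\Pi'}|=|C_{\Pi''}|$, which gives the clause that both taikos carry the same number $k$ of colors. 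Since each directed horizontal edge inherits the color of its underlying undirected edge, $\kappa_{\Pi''}(f(u),f(v))=\rho(\kappa_{\Pi'}(u,v))$, which is exactly item~(ii) of Definition~\ref{def:iso-taiko-strong} with $\varepsilon\equiv +1$. Combining~(i) and~(ii) yields $\mathcal T(\Pi')\cong \mathcal T(\Pi'')$.

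The only subtle point, and what I expect to be the main obstacle, is the well-definedness of $\rho$: one must confirm that the cell-level permutation descends unambiguously through $\sim$, independently of the representative undirected horizontal edge chosen within a color class. This reduces to observing that any two horizontal edges in a common class are linked through a finite chain of cells, each of which is preserved setwise by $(\sigma,\tau)$. Because directed and undirected horizontal edges share colors by convention, setting $\varepsilon\equiv +1$ suffices here; the nontrivial flips of Definition~\ref{def:iso-middle-link} would only become necessary when lifting this taiko isomorphism to the middle link.
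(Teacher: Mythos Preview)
Your proof is correct and follows essentially the same route as the paper: transport the subpartition bijection $(\sigma,\tau)$ from Lemma~\ref{lemma:isomorphic_left_aligned_cells} to the taiko level, check that horizontal edges and their color classes are carried along, and read off Definition~\ref{def:iso-taiko-strong}. The paper differs mainly in that it explicitly constructs the orientation $O''$ on $\Pi''$ by transport from a chosen $O'$ on $\Pi'$ (splitting into the cases $k'=k$ and $k'=k+1$ according to whether the new cell creates a new color class), and then verifies that $f$ is a directed graph isomorphism $\mathsf L_A(\Pi')\sqcup\mathsf L_B(\Pi')\to\mathsf L_A(\Pi'')\sqcup\mathsf L_B(\Pi'')$. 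You instead appeal directly to the symmetric definition of $E_{AB}$ in \eqref{def:Ha}--\eqref{def:Hb}. Both arguments arrive at the same conclusion, and your identification of the well-definedness of $\rho$ as the only nontrivial step matches the paper's ``Colors'' paragraph.

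One small correction to your closing remark: the flip parameter $\varepsilon$ already lives in Definition~\ref{def:iso-taiko-strong} at the taiko level; Definition~\ref{def:iso-middle-link} merely extends the same data to the middle vertices. So it is not that flips ``only become necessary'' at the middle link. In the present situation $\varepsilon\equiv+1$ works because one is free to \emph{choose} the orientation on $\Pi''$ to be the $(\sigma,\tau)$-transport of the orientation on $\Pi'$ --- which is exactly what the paper does --- not because flips are a middle-link-only phenomenon.
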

\begin{proof}
Let $E_{AB}, E_{AB}', E_{AB}''$ be horizontal edges of taikos $\mathcal{T}(\Pi), \mathcal{T}(\Pi'), \mathcal{T}(\Pi'')$, respectively. Let $k, k', k''$ be the number of colors (or equivalence classes) of horizontal edge set of $\mathcal{T}(\Pi)$, $\mathcal{T}(\Pi')$ and $\mathcal{T}({\Pi}'')$, respectively. Let $O$, $O'$ and $O''$ be the orientation functions with respect to $\Pi$, $\Pi'$ and $\Pi''$, respectively. Let $P = \{C_i\}_{i\in I}$, $P' = P \cup \{C\}$ where $C=\{(a_{i_1},b_{j_1}),\ (a_{i_2},b_{j_2})\}$ and ${P''} = P \cup \{\mathcal{A}_P(C)\}$. Note that $k \le \min\{k', k''\}$.

Let \((\sigma,\tau)\in S_m\times S_n\) be as in Lemma \ref{lemma:isomorphic_left_aligned_cells} so that \((\sigma,\tau)\cdot P=P'\). Define the vertex relabeling 
\[
f:A\sqcup B\longrightarrow A\sqcup B,\qquad f(a_i):=a_{\sigma(i)},\quad f(b_j):=b_{\tau(j)}.\]

\emph{Vertical edges.} If \((a_i,b_j)\) is a vertical edge of \(\mathcal T(\Pi)\) (i.e.\ lies in some \(C\in P\)), then \((a_{\sigma(i)},b_{\tau(j)})\) is a vertical edge of \(\mathcal T(\Pi')\) because \((\sigma,\tau)\cdot P'=P''\). Thus \(f\) identifies the vertical parts.

\emph{Undirected horizontal adjacencies.}  
Recall (by the definition of \(\bar E_A,\bar E_B\)) that
\(\{a_i,a_{i'}\}\in\bar E_A(\Pi')\) iff there exist \(b_j,b_{j'}\) with
\(\{(a_i,b_j),(a_{i'},b_{j'})\}\in P'\). Applying \((\sigma,\tau)\) gives
\(\{(a_{\sigma(i)},b_{\tau(j)}),(a_{\sigma(i')},b_{\tau(j')})\}\in P''\),
hence \(\{a_{\sigma(i)},a_{\sigma(i')}\}\in\bar E_A(\Pi'')\).
Thus \(f\) induces a bijection \(\bar E_A(\Pi')\cong \bar E_A(\Pi'')\).
The same argument on \(B\) shows \(f:\bar E_B(\Pi')\xrightarrow{\cong}\bar E_B(\Pi'')\),
so overall \(f:\bar E_{AB}(\Pi')\xrightarrow{\cong}\bar E_{AB}(\Pi'')\).

\emph{Oriented horizontal edges.}  
Assume \(\Pi'\) is orientable with orientation map \(O':\bar E_{AB}'\to E_{AB}'\) (Definition~\ref{def:orientation}). Note that either $k' = k$ or $k' = k+1$.

Case 1: If $k' > k$, we can choose $O'$ such that $O'\mid_{E_{AB}} = O$ ($O'$ when restricted to edge set of $\Pi$, coincides with $O$), $O'(\{a_{i_1}, a_{i_2}\}) = (a_{i_1}, a_{i_2})$ and $O'(\{b_{j_1}, b_{j_2}\})=(b_{j_1}, b_{j_2})$.

Case 2: If $k' = k$, then either $\{a_{i_1}, a_{i_2}\} \in \bar E_{AB}$ or $\{b_{j_1}, b_{j_2}\} \in \bar E_{AB}$. If $\{a_{i_1}, a_{i_2}\} \in \bar E_{AB}$. Then $O'(\{a_{i_1}, a_{i_2}\}) = O(\{a_{i_1}, a_{i_2}\})$ and this fixes an orientation on $\{b_{j_1}, b_{j_2}\}$. Otherwise $O'(\{b_{j_1}, b_{j_2}\}) = O(\{b_{j_1}, b_{j_2}\})$ and this fixes an orientation on $\{a_{i_1}, a_{i_2}\}$.

We define an orientation on \(\Pi''\) by {transport of structure}:
\[
O''\big(\{f(x),f(y)\}\big)\ :=\ f\big(O'(\{x,y\})\big)\qquad(\{x,y\}\in\bar E_{AB}').
\]
Because \(O'\) satisfies the \(2\)-cell compatibility on \(P'\) and \((\sigma,\tau)\cdot P'= P''\),
the map \(O''\) satisfies the same compatibility on \( P''\). Consequently,
\(
(a_i,a_{i'})\in E_A(\Pi)\ \Longrightarrow\ (a_{\sigma(i)},a_{\sigma(i')})\in E_A(\Pi''),
\)
and similarly on \(B\). Hence \(f\) is a directed graph isomorphism
\[
f:\ \mathsf L_A(\Pi)\sqcup\mathsf L_B(\Pi)\ \longrightarrow\ \mathsf L_A(\Pi'')\sqcup\mathsf L_B(\Pi'').
\]

\emph{Colors.}  
Let \(\kappa',\kappa'' :E_{AB}'\to\{1,\dots,\max\{k',k''\}\}\) be the color maps on horizontal edges (the classes of \(E_{AB}\)). Define \(\rho\in S_k\) by
\[
\rho\big(\kappa'(e)\big)\ :=\ \kappa''\big(f(e)\big)\qquad(e\in E_{AB}).
\]
This is well defined because two horizontal edges have the same color for $\Pi'$ iff their images under $f$ have the same color for $\Pi''$ (the equivalence relation is transported by
$(\sigma,\tau)$). Hence $\rho$ is a bijection, in particular $|C'|=|C''|$, so both taikos have the
same number $k$ of colors.

Combining the vertical identification, the horizontal directed-graph isomorphism, and the color
relabeling, the triple $(\sigma,\tau,\rho)$ realizes an isomorphism
$\mathcal T(\Pi')\cong \mathcal T(\Pi'')$ in the sense of Definition~\ref{def:iso-taiko-strong}.
\end{proof}

\begin{lemma}[Middle links are preserved under taiko isomorphism]\label{lem:L1-iso}
Let $\Pi$ and $\Pi'$ be product substructures. If
$\mathcal T(\Pi)\cong \mathcal T(\Pi')$ via $(\sigma,\tau,\rho,\varepsilon)$
in the sense of Definition~\ref{def:iso-taiko-strong}, then
\[
\mathsf L_1(\Pi)\ \cong\ \mathsf L_1(\Pi').
\]
\end{lemma}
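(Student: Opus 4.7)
The plan is to unpack Definition~\ref{def:iso-middle-link} and verify that the data $(\sigma,\tau,\rho,\varepsilon)$ given by the taiko isomorphism already supplies everything needed to produce an isomorphism of the middle links. First I would define the extension $f:A\sqcup B\sqcup M(\Pi)\to A\sqcup B\sqcup M(\Pi')$ exactly by the formula prescribed in Definition~\ref{def:iso-middle-link}: on $A\sqcup B$ use $f(a_i)=a_{\sigma(i)}$, $f(b_j)=b_{\tau(j)}$; on the middle vertices set $f((c,s))=(\rho(c),s)$ when $\varepsilon(c)=+1$ and $f((c,s))=(\rho(c),\iota(s))$ when $\varepsilon(c)=-1$, where $\iota$ swaps $\mathrm{in}\leftrightarrow\mathrm{out}$. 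Since $\sigma,\tau$ are permutations, $\rho$ is a bijection of color sets, and $\iota$ is a bijection of $\{\mathrm{in},\mathrm{out}\}$, the map $f$ is a bijection on the whole vertex set.

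Next I would verify the edge condition. Every edge of $\mathsf L_1(\Pi)$ arises from a directed horizontal edge $e=(x,y)\in E_{AB}(\Pi)$ with some color $c=\kappa_\Pi(e)$, contributing precisely the two undirected edges $\{x,(c,\mathrm{out})\}$ and $\{y,(c,\mathrm{in})\}$. By hypothesis, the taiko isomorphism sends $e$ to an edge $e'\in E_{AB}(\Pi')$ of color $\rho(c)$, where $e'=(f(x),f(y))$ if $\varepsilon(c)=+1$ and $e'=(f(y),f(x))$ if $\varepsilon(c)=-1$. In the first case, $e'$ contributes $\{f(x),(\rho(c),\mathrm{out})\}$ and $\{f(y),(\rho(c),\mathrm{in})\}$ to $\mathsf L_1(\Pi')$, which match $\{f(x),f((c,\mathrm{out}))\}$ and $\{f(y),f((c,\mathrm{in}))\}$ since $\varepsilon(c)=+1$ makes $f$ act trivially on the $\{\mathrm{in},\mathrm{out}\}$ coordinate. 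In the second case, $e'$ contributes $\{f(y),(\rho(c),\mathrm{out})\}$ and $\{f(x),(\rho(c),\mathrm{in})\}$, which coincide with $\{f(x),f((c,\mathrm{out}))\}$ and $\{f(y),f((c,\mathrm{in}))\}$ because $f((c,\mathrm{out}))=(\rho(c),\mathrm{in})$ and $f((c,\mathrm{in}))=(\rho(c),\mathrm{out})$ under $\varepsilon(c)=-1$. Thus every edge of $\mathsf L_1(\Pi)$ maps to an edge of $\mathsf L_1(\Pi')$.

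Finally I would verify surjectivity on edges by running the same argument in reverse, using the inverse taiko isomorphism $(\sigma^{-1},\tau^{-1},\rho^{-1},\varepsilon\circ\rho^{-1})$. Combined with the bijectivity of $f$, this shows $\{u,v\}\in E(\mathsf L_1(\Pi))$ iff $\{f(u),f(v)\}\in E(\mathsf L_1(\Pi'))$, which is the required isomorphism of graphs.

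The main obstacle, though not genuinely difficult, is bookkeeping: one must check that the $\iota$-swap built into the extension of $f$ correctly cancels the edge reversal induced by $\varepsilon(c)=-1$, so that the pairing $\mathrm{out}\leftrightarrow$ tail, $\mathrm{in}\leftrightarrow$ head on both sides remains consistent. The proof is essentially a bijective verification, split into the two cases $\varepsilon(c)=\pm1$, and everything reduces to the definition of $\mathsf L_1$ once the extension is correctly set up.
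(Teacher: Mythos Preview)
Your proposal is correct and follows essentially the same approach as the paper: both define the extension of $f$ to middle vertices via $\rho$ and the $\iota$-swap governed by $\varepsilon$, then verify edge preservation by the same case split on $\varepsilon(c)=\pm1$. Your explicit mention of surjectivity via the inverse taiko isomorphism is a small addition the paper leaves implicit, but otherwise the arguments are the same.
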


\begin{proof}
Let $f:A\sqcup B\to A\sqcup B$ be the vertex relabelling
$f(a_i)=a_{\sigma(i)}$, $f(b_j)=b_{\tau(j)}$.
By taiko isomorphism, for every directed horizontal edge $e=(x,y)$ of
$\mathcal T(\Pi)$ with color $i=\kappa_\Pi(e)$, the image edge in $\mathcal T(\Pi')$ is
\[
e' \ =\
\begin{cases}
(f(x),\,f(y)), & \varepsilon(i)=+1,\\[2pt]
(f(y),\,f(x)), & \varepsilon(i)=-1,
\end{cases}
\qquad\text{with color }\ \kappa_{\Pi'}(e')=\rho(i).
\]

Extend $f$ to the middle vertices. Write $C_\Pi$ and $C_{\Pi'}$ for the color sets;
the middle vertices are $M(\Pi)=C_\Pi\times\{\mathrm{in},\mathrm{out}\}$ and
$M(\Pi')=C_{\Pi'}\times\{\mathrm{in},\mathrm{out}\}$.
Let $\iota$ be the swap on $\{\mathrm{in},\mathrm{out}\}$.
Define, for $s\in\{\mathrm{in},\mathrm{out}\}$,
\[
f\big((i,s)\big)\ :=\
\begin{cases}
(\rho(i),\,s), & \varepsilon(i)=+1,\\[2pt]
(\rho(i),\,\iota(s)), & \varepsilon(i)=-1.
\end{cases}
\]

Now use the combinatorial construction of $\mathsf L_1$.
From a horizontal edge $e=(x, y)$ of color $i$, $\mathsf L_1(\Pi)$ contains the
(undirected) edges $\{x,(i,\mathrm{out})\}$ and $\{y,(i,\mathrm{in})\}$.
Apply $f$:
\[
\{x,(i,\mathrm{out})\}\mapsto\{f(x),\,f(i,\mathrm{out})\},\qquad
\{y,(i,\mathrm{in})\}\mapsto\{f(y),\,f(i,\mathrm{in})\}.
\]

\noindent
$\bullet \ $  If $\varepsilon(i)=+1$, then $e'=(f(x), f(y))$ has color $\rho(i)$, and $\mathsf L_1(\Pi')$
adds $\{f(x),(\rho(i),\mathrm{out})\}$ and $\{f(y),(\rho(i),\mathrm{in})\}$,
which are exactly the edges added in \(\mathsf L_1(\Pi')\) from the colored edge \(f(e)\) because $f(i,\mathrm{out})=(\rho(i),\mathrm{out})$ and
$f(i,\mathrm{in})=(\rho(i),\mathrm{in})$.

\noindent
$\bullet \ $  If $\varepsilon(i)=-1$, then $e'=(f(y), f(x))$ has color $\rho(i)$, and $\mathsf L_1(\Pi')$
adds $\{f(y),(\rho(i),\mathrm{out})\}$ and $\{f(x),(\rho(i),\mathrm{in})\}$.
These match the images above because now
$f(i,\mathrm{out})=(\rho(i),\mathrm{in})$ and $f(i,\mathrm{in})=(\rho(i),\mathrm{out})$.

Thus $f$ is a bijection on vertices that preserves adjacency, hence an isomorphism
$\mathsf L_1(\Pi)\to \mathsf L_1(\Pi')$.
\end{proof}

\begin{corollary}\label{cor:L1-left-align}
With \(\Pi=(A,B,P')\) and \(\Pi''=(A,B,\bar P')\) as in Lemma~\ref{lemma:isomorphic_left_aligned_cells},
we have \(\mathsf L_1(\Pi)\cong \mathsf L_1(\Pi'')\).
\end{corollary}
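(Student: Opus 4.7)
The plan is to chain the two immediately preceding results. After matching notation (identifying the $\bar P'$ of the corollary with the $P''=P\cup\{\mathcal A_P(C)\}$ of Lemma~\ref{lemma:isomorphic_left_aligned_cells}), Lemma~\ref{lemma:left-align-isomorphic-partitions} gives a taiko isomorphism $\mathcal T(\Pi)\cong \mathcal T(\Pi'')$ in the strong sense of Definition~\ref{def:iso-taiko-strong}. Lemma~\ref{lem:L1-iso} then asserts that any such taiko isomorphism induces an isomorphism of the corresponding middle links. Composing these two steps produces the desired isomorphism $\mathsf L_1(\Pi)\cong \mathsf L_1(\Pi'')$, so the corollary is essentially a one-line deduction.

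To make the application of Lemma~\ref{lem:L1-iso} explicit, I would spell out the quadruple $(\sigma,\tau,\rho,\varepsilon)$ produced inside the proof of Lemma~\ref{lemma:left-align-isomorphic-partitions}. The permutations $(\sigma,\tau)\in S_m\times S_n$ are those from Definition~\ref{def:left-alignment}: they fix every $A$-index $\le i_P$ and every $B$-index $\le j_P$ and satisfy $(\sigma,\tau)\cdot(P\cup\{C\})=P\cup\{\mathcal A_P(C)\}$. The color bijection $\rho$ is the one given by the rule $\rho(\kappa'(e))=\kappa''(f(e))$ on horizontal edges, which is well defined because the equivalence relation on $E_{AB}$ is transported by $(\sigma,\tau)$. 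Since the orientation $O''$ on $\Pi''$ is defined in Lemma~\ref{lemma:left-align-isomorphic-partitions} by transport of structure from $O'$, no color class needs to be reversed, so the flip function may be taken to be $\varepsilon\equiv +1$.

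Feeding $(\sigma,\tau,\rho,\varepsilon)$ into Lemma~\ref{lem:L1-iso} yields a vertex bijection
\[
f:\ A\sqcup B\sqcup M(\Pi)\ \longrightarrow\ A\sqcup B\sqcup M(\Pi'')
\]
which, by the middle-vertex rule $f(i,s)=(\rho(i),s)$ (the $\varepsilon(i)=-1$ branch is vacuous here), preserves every edge of $\mathsf L_1(\Pi)$ arising from each colored horizontal edge of the taiko. This completes the proof, since the $\mathsf L_1$ construction depends only on this colored data. There is no genuine obstacle: the real work has already been packaged into Lemmas~\ref{lemma:left-align-isomorphic-partitions} and~\ref{lem:L1-iso}. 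The only points requiring care are the bookkeeping identification $\bar P'=P\cup\{\mathcal A_P(C)\}$ and the observation that the flips $\varepsilon$ produced by left alignment are trivial, so no in/out swap on middle vertices is invoked.
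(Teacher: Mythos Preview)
Your proposal is correct and follows exactly the paper's approach: the paper's proof is the single line ``By Lemma~\ref{lemma:left-align-isomorphic-partitions} the taikos are isomorphic; apply Lemma~\ref{lem:L1-iso}.'' Your additional unpacking of the quadruple $(\sigma,\tau,\rho,\varepsilon)$ with $\varepsilon\equiv+1$ is correct extra detail, and your identification of the paper's $\bar P'$ with the $P''$ of Lemma~\ref{lemma:isomorphic_left_aligned_cells} correctly resolves a minor notational slip in the corollary statement.
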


\begin{proof}
By Lemma~\ref{lemma:left-align-isomorphic-partitions} the taikos are isomorphic; apply
Lemma~\ref{lem:L1-iso}.
\end{proof}


\begin{definition}[Valid subpartition]\label{def:valid-subpartition}
A subpartition $S\subseteq A\times B$ is \emph{valid} if it satisfies the disjoint vertex condition and its product substructure $\Pi=(A,B,S)$ satisfies the combinatorial conditions $\mathsf T_1$–$\mathsf T_4$.
\end{definition}

\begin{corollary}[Isomorphism invariance of validity]\label{cor:isomorphismvalidsubpartition}
If $\Pi=(A,B,P)$ and $\Pi'=(A,B,P')$ have isomorphic taikos, then
$P$ is valid iff $P'$ is valid.
\end{corollary}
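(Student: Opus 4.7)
The plan is to verify, condition by condition, that each of the defining requirements of validity is preserved under a taiko isomorphism $(\sigma,\tau,\rho,\varepsilon)$ as in Definition~\ref{def:iso-taiko-strong}. Since the relation ``isomorphic taikos'' is symmetric, it suffices to show that if $P$ is valid then $P'$ is valid; the converse follows by applying the same argument to $(\sigma^{-1},\tau^{-1},\rho^{-1},\varepsilon)$.

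First I would handle the structural prerequisites. Clause (i) of Definition~\ref{def:iso-taiko-strong} gives $(\sigma,\tau)\cdot P=P'$, so the vertex-relabelling $f$ sends 2-cells to 2-cells bijectively; this is exactly the condition that the disjoint vertex condition transfers from $P$ to $P'$. Next, for $\mathsf{T}_1$ (\textsf{orientation}), if $O$ orients $\Pi$, define $O'$ by transport of structure: for each horizontal edge $\{f(x),f(y)\}$ of $\Pi'$ coming from an edge of color $i$, set $O'(\{f(x),f(y)\})=(f(x),f(y))$ if $\varepsilon(i)=+1$ and $(f(y),f(x))$ otherwise, with the sign chosen consistently on the whole color class. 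The compatibility clause (ii) of Definition~\ref{def:iso-taiko-strong} guarantees that this assignment is well defined on each 2-cell of $P'$, so $\Pi'$ is orientable.

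For $\mathsf{T}_2$ (\textsf{no-fold}), a fold at a vertex $v\in A\sqcup B$ is a pair of horizontal edges of the same color sharing the same direction (both incoming or both outgoing) at $v$. Under the taiko isomorphism, an edge of color $i$ incident to $v$ corresponds to an edge of color $\rho(i)$ incident to $f(v)$, and the sign $\varepsilon(i)$ is applied uniformly across the entire color class; hence ``both incoming'' or ``both outgoing'' at $v$ translates to the same condition at $f(v)$. A fold in $\Pi'$ would pull back to a fold in $\Pi$, contradicting validity of $P$. The argument for $\mathsf{T}_3$ (\textsf{no-pattern}) is identical in spirit: patterns are unordered pairs of (color, direction) at a common vertex, and the maps $\rho$ and $\varepsilon$ set up a bijection between patterns in $\Pi$ and patterns in $\Pi'$, preserving their multiset of occurrences. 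Finally, for $\mathsf{T}_4$ (\textsf{triple-girth}), the map $f$ is a graph isomorphism $\mathsf L_{AB}(\Pi)\to\mathsf L_{AB}(\Pi')$ on underlying undirected graphs (by clause (ii) of Definition~\ref{def:iso-taiko-strong}) and, by Lemma~\ref{lem:L1-iso}, also an isomorphism $\mathsf L_1(\Pi)\to\mathsf L_1(\Pi')$. Girth and half-girth are invariants of undirected isomorphism, so $\girth(\mathsf L_{AB})$ and $\hgirth(\mathsf L_1)$ agree on $\Pi$ and $\Pi'$.

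The only step requiring care is $\mathsf{T}_1$: one must check that the sign function $\varepsilon$ being defined per color class (rather than per edge) is exactly what is needed for the transported orientation to be consistent on each 2-cell of $P'$. The other three conditions reduce to the observation that they are formulated as invariants of the colored oriented graph structure that a taiko isomorphism is, by definition, designed to preserve. Invoking Lemma~\ref{lem:L1-iso} for the middle-link half-girth completes the argument.
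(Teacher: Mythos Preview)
Your proposal is correct and follows the same approach as the paper: showing that each of $\mathsf{T}_1$--$\mathsf{T}_4$ (and the disjoint vertex condition) is an invariant of the colored oriented graph structure preserved by a taiko isomorphism. The paper compresses this into a single sentence asserting invariance under relabelling by $(\sigma,\tau)$ and the induced color permutation, whereas you have helpfully unpacked the verification condition by condition and made explicit the role of the per-color sign $\varepsilon$ for $\mathsf{T}_1$ and the appeal to Lemma~\ref{lem:L1-iso} for the half-girth part of $\mathsf{T}_4$.
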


\begin{proof}
Each condition $\mathsf T_1$–$\mathsf T_4$ is invariant under relabelling by $(\sigma,\tau)$ (and the induced permutation of colors), hence validity is preserved along orbits.
\end{proof}

\begin{remark}[{The effect of left alignment on duplicates}]
Left alignment does not by itself produce a unique representative of a whole subpartition up to isomorphism; however, it drastically reduces symmetry at each step by fixing the positions of the {new} indices of vertices in a $2-$cell that we want to add to a subpartition. In other words, any relabeling that preserves the current subpartition (up to isomorphism) must fix these newly used indices, so the automorphism group shrinks and candidates that differ only by such relabelings collapse to a single representative (fewer cases to check). 

For example, the number of $2$–cells in $A\times B$ is $  2\binom{m}{2}\binom{n}{2}$. Relative to the empty subpartition,
left alignment sends any such cell $C$ to \(
\mathcal{A}_\emptyset(C) = \{(a_1,b_1),(a_2,b_2)\} \) as shown in Figure \ref{fig:alignining-left-process}.
\end{remark}



\subsection{Search directed acyclic graph}\label{subsec:search-dag}

\begin{definition}[Search DAG]\label{def:search-dag}
A \textit{search directed acyclic graph (search DAG)} is a directed graph so that:
\begin{itemize}[leftmargin=*,label=\textbullet]
  \item \textit{Vertices.} A vertex is a valid left-aligned subpartition $P\subseteq A\times B$, i.e.\ a valid
  subpartition obtained from the root $\emptyset$ by iteratively adding left-aligned cells as in
  Definition~\ref{def:left-alignment}. Equivalently: there exists an ordering $P=\{C^1,\dots,C^t\}$
  such that $C^1=\mathcal A_{\emptyset}(C^1)$ and, inductively,
  $C^r=\mathcal A_{P^{r-1}}(C^r)$ for $P^{r-1}=\{C^1,\dots,C^{r-1}\}$, and each $P^{r}$ is valid.

  \item \textit{Root.} The root is $P_0:=\emptyset$.

  \item \textit{Edges.} There is a directed edge $(P, P')$ iff $P' = P \cup \{\mathcal A_P(C)\}$
  for some $2$-cell $C$ disjoint from $P$, and $P'$ is valid. (Children are always left-aligned
  relative to their parent.)
\end{itemize}
\end{definition}

\begin{proposition}[The search DAG is finite and acyclic]\label{prop:search-dag}
The directed graph of Definition~\ref{def:search-dag} is a finite directed acyclic graph
rooted at $\emptyset$.
\end{proposition}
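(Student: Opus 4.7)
The plan is to verify the three clauses: \emph{finiteness}, \emph{acyclicity}, and \emph{rooted at $\emptyset$}, in order, all of which follow from elementary counting once the definitions are unpacked.

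First, I will establish finiteness by a crude bound. Every vertex of the search DAG is a subpartition of $A\times B$, hence a set of cells, each cell being a subset of $A\times B$. Since $|A\times B|=mn$ is finite, the collection of subsets of $A\times B$ is finite, and so the collection of all subpartitions is finite as well. In particular, the number of valid left-aligned subpartitions is bounded above by $2^{|A\times B|}$ (in fact much smaller, since $2$-cells are pairs of edges and the disjoint vertex condition restricts admissible collections to at most $2\binom{m}{2}\binom{n}{2}$ possible cells). Either bound is more than enough to guarantee that the vertex set is finite, and since the out-degree at each vertex is likewise bounded by the number of available $2$-cells, the whole directed graph is finite.

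Next, I will show acyclicity by a strict monotonicity argument. By the definition of an edge $(P,P')$, we have $P'=P\cup\{\mathcal A_P(C)\}$ where $C$ is a $2$-cell disjoint from every cell of $P$; in particular $\mathcal A_P(C)\notin P$, so $|P'|=|P|+1$. Thus the function $P\mapsto |P|$ is strictly increasing along every directed edge. A directed cycle $P\to P_1\to\cdots\to P_\ell\to P$ would then force $|P|<|P_1|<\cdots<|P_\ell|<|P|$, a contradiction. Hence no directed cycle exists.

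Finally, I will verify the DAG is rooted at $P_0=\emptyset$, i.e.\ that every vertex $P$ is reachable from $\emptyset$ via a directed path. This is essentially built into the definition of vertex: by Definition~\ref{def:search-dag}, a subpartition $P$ is a vertex precisely when there is an ordering $P=\{C^1,\dots,C^t\}$ with $C^1=\mathcal A_{\emptyset}(C^1)$, $C^r=\mathcal A_{P^{r-1}}(C^r)$ for each $r$, and every intermediate $P^{r}=\{C^1,\dots,C^r\}$ valid. By construction each consecutive pair $(P^{r-1},P^{r})$ is then an edge of the DAG, giving the directed path $\emptyset=P^0\to P^1\to\cdots\to P^t=P$. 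If $P=\emptyset$ the empty path works.

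There is no real obstacle here; the only point that deserves care is to ensure the three ingredients of an edge — disjointness of $\mathcal A_P(C)$ from $P$ (so $|P'|=|P|+1$), validity of the child $P'$, and the left-alignment convention — are all consistent with the definition of vertex, so that the inductive history certifying $P$ as a vertex indeed yields a sequence of DAG edges. This is immediate from Definition~\ref{def:left-alignment} and Definition~\ref{def:valid-subpartition}.
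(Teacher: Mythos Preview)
Your proof is correct and follows essentially the same approach as the paper: acyclicity via the strict increase of $|P|$ along edges, finiteness from the finiteness of $A\times B$, and rootedness from the very definition of vertices as those reachable from $\emptyset$ by a chain of valid left-aligned extensions. Your version is more detailed (in particular, you spell out why $\mathcal A_P(C)\notin P$), but the underlying argument is identical.
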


\begin{proof}
Along any edge $(P, P')$ we have $|P'|=|P|+1$, so no directed cycle can occur.
Finiteness follows from finiteness of $A,B$ and the validity constraints.
Every vertex is obtained from $\emptyset$ by a finite sequence of one–cell extensions,
so $\emptyset$ is a root and the graph is connected from the root.
\end{proof}


\subsubsection*{Generating children}
Given a vertex $P$, we generate its outgoing edges as follows:
\begin{enumerate}
\item \textit{List candidates.} Enumerate all $2$–cells
$C=\{(a_{i_1},b_{j_1}),\,(a_{i_2},b_{j_2})\}$ that are disjoint from the cells of $P$.
\item \textit{Left-align.} For each candidate $C$, form its left-aligned image
$C^\dagger:=\mathcal A_P(C)$ (Definition~\ref{def:left-alignment}).
\item \textit{De-duplicate aligned cells.} Replace the multiset of aligned representatives by the set
\[
\mathcal C(P)\ :=\ \bigl\{\,C^\dagger=\mathcal A_P(C)\ \bigm|\ C\text{ disjoint from the cells of }P\,\bigr\},
\]
i.e.\ keep one representative for each distinct aligned $2$–cell $C^\dagger$.
\item \textit{Validity pruning.} For each $C^\dagger\in\mathcal C(P)$, if
$P':=P\cup\{C^\dagger\}$ is valid, add the edge $P\to P'$ to the search DAG.
\end{enumerate}

By Lemma~\ref{lemma:left-align-isomorphic-partitions}, replacing $C$ by $\mathcal A_P(C)$
preserves the isomorphism type; thus enumerating aligned representatives of cells and de-duplicating them
suffices to generate all valid extensions up to isomorphism.

\begin{remark}[Exploration tree for traversal]
While the intrinsic object is the search DAG, for traversal we maintain an \textit{exploration tree} --- a rooted directed spanning tree of the DAG (one parent per node, unique root-to-node path). Fix a deterministic total order on the aligned candidates $C^\dagger$ (e.g.\ lexicographic order on $(i_1<i_2;\,j_1,j_2)$). When a new vertex $P'$ is first discovered from $P$, record $P$ as the (unique) parent of $P'$ in the exploration tree. This parent choice depends on the traversal order and is not canonical, but it does not affect correctness; it simply provides a convenient backtracking structure.
\end{remark}

\subsection{The Algorithm}\label{subsec:algorithm}
We perform a depth–first search (DFS) on the search DAG to enumerate all valid product substructures $\Pi =(A,B,P)$ up to isomorphism. If $mn$ is even, each cell of $P$ consists of two edges, that is, each cell is a \textit{2-cell}. If $mn$ is odd, exactly one cell of $P$ is a $1-$cell and the rest are $2-$cells. A partition is built recursively, starting with an empty subpartition or a $1$-cell, and iterating by adding a $2$-cell until either a full partition is achieved or the current subpartition fails to satisfy the conditions $\mathsf{T}_1 - \mathsf{T}_4$.

\textsc{Input:} Fix $m,n$ and put $A=\{a_1,\dots,a_m\}$, $B=\{b_1,\dots,b_n\}$.

\textsc{DFS traversal (recursion):}

\begin{enumerate}
\item \textit{Initialization.} Start at the root $P=\emptyset$.
\item \textit{Expansion rule.} At a current vertex $P$, generate children according to the canonical
four–step procedure above and push each valid child $P'$ onto the DFS stack.
\item \textit{Advance/backtrack.} Pop the stack: if it is a new vertex, visit it and expand; otherwise backtrack.
\item \textit{Termination.} The search terminates when the stack is empty. Equivalently, one may stop
a branch when a full partition is reached (all edges paired if $mn$ is even, or exactly one $1$–cell
left if $mn$ is odd), or when no valid child exists.
\end{enumerate}

\begin{algorithm}
 \KwData{$(m,n)$ with $|A|=m$, $|B|=n$}
 \KwResult{All valid product substructures $\Pi=(A,B,P)$ up to isomorphism}
 initialize stack with $\emptyset$\;
 \While{stack not empty}{
   $P \leftarrow$ pop stack\;
    Output all possible children of $P$\;
     \ForEach{disjoint $2$–cell $C$}{
       $C^\dagger \leftarrow \mathcal A_P(C)$; apply canonical tie–break on the $B$–matching\;
       \If{$P' := P\cup\{C^\dagger\}$ is valid}{
         push $P'$ onto stack\;
       }
     }
 }
 \caption{Depth–first enumeration on the search DAG}\label{alg:dfs-search-dag}
\end{algorithm}


\begin{theorem}
    The Algorithm \ref{alg:dfs-search-dag} generates all product substructures 
    $\Pi=(A,B,P)$, where $P$ is a subpartition of $A\times B$, that satisfy $\mathsf{T}_1$–$\mathsf{T}_4$, 
    \textit{up to isomorphism}; that is, for every such $\Pi$ there exists an output 
    $\widehat\Pi$ of the algorithm with $\widehat\Pi \cong \Pi$. 
    (The algorithm need not produce a unique representative of each isomorphism class; 
    left alignment reduces but does not eliminate duplicates.)
\end{theorem}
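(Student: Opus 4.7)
The plan has two components: (a) the DFS traversal visits every vertex of the search DAG, and (b) every valid product substructure is isomorphic to some vertex of the search DAG. Part (a) is immediate from Proposition~\ref{prop:search-dag}: the DAG is finite, acyclic, rooted at $\emptyset$, and the expansion rule in Algorithm~\ref{alg:dfs-search-dag} generates exactly the outgoing edges of the DAG, so the standard argument for DFS on a finite rooted acyclic graph shows every vertex is visited. The real content is (b), which I would prove by induction on $t:=|P|$.

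For the inductive argument, fix any enumeration $P=\{C^1,\dots,C^t\}$ and set $P^r:=\{C^1,\dots,C^r\}$. By the hereditary direction of Lemma~\ref{lemma:hereditary}, every prefix $P^r$ is valid. I would construct recursively, for $0\le r\le t$, a vertex $\widehat P_r$ of the search DAG together with $(\sigma_r,\tau_r)\in S_m\times S_n$ satisfying the invariant $(\sigma_r,\tau_r)\cdot P^r=\widehat P_r$. The base case $r=0$ is trivial with $\widehat P_0:=\emptyset$ and $(\sigma_0,\tau_0)$ the identity (or, in the odd case, seed the recursion with a single $1$-cell instead of $\emptyset$). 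For the inductive step, let $C:=(\sigma_{r-1},\tau_{r-1})\cdot C^r$; since $(\sigma_{r-1},\tau_{r-1})$ is a bijection and $C^r$ is disjoint from $P^{r-1}$, the cell $C$ is disjoint from every cell of $\widehat P_{r-1}$, so $\widehat C^r:=\mathcal A_{\widehat P_{r-1}}(C)$ is defined. Put $\widehat P_r:=\widehat P_{r-1}\cup\{\widehat C^r\}$. By Definition~\ref{def:left-alignment} there exists $(\sigma',\tau')$ fixing $\widehat P_{r-1}$ setwise with $(\sigma',\tau')\cdot C=\widehat C^r$; composing, $(\sigma_r,\tau_r):=(\sigma',\tau')\circ(\sigma_{r-1},\tau_{r-1})$ maintains the invariant. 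Validity of $\widehat P_r$ then follows from $\widehat P_r\cong P^r$ together with Corollary~\ref{cor:isomorphismvalidsubpartition}, and by construction $\widehat P_{r-1}\to\widehat P_r$ is an edge of the search DAG. At $r=t$ this produces a vertex $\widehat P_t$ of the DAG with $\widehat P_t\cong P$, which by (a) is visited and output by the algorithm.

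The main obstacle is the cumulative bookkeeping: one must carry the permutation $(\sigma_r,\tau_r)$ through the induction so that the next incoming cell $C^{r+1}$ is transported into the aligned coordinate frame before being left-aligned, and so that the resulting $\widehat P_r$ is recognized as a child in the DAG of Definition~\ref{def:search-dag} rather than merely as some isomorphic valid subpartition. Once this invariant is set up, every other ingredient is already available: Proposition~\ref{prop:search-dag} for DFS completeness, Lemma~\ref{lemma:hereditary} for hereditary validity, Lemma~\ref{lemma:isomorphic_left_aligned_cells} for the isomorphism induced by left alignment, and Corollary~\ref{cor:isomorphismvalidsubpartition} for preservation of validity. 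Note that the theorem asserts only existence of an isomorphic representative, not uniqueness, which matches the subsequent remark that left alignment prunes but does not eliminate all duplicates.
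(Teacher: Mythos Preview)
Your proof is correct and follows essentially the same induction-on-cardinality strategy as the paper, invoking the same ingredients (Lemma~\ref{lemma:hereditary} for hereditary validity, Lemma~\ref{lemma:isomorphic_left_aligned_cells}/\ref{lemma:left-align-isomorphic-partitions} for the left-alignment isomorphism, and Corollary~\ref{cor:isomorphismvalidsubpartition} for invariance of validity). Your explicit maintenance of the cumulative permutation $(\sigma_r,\tau_r)$ is in fact tighter than the paper's version, which writes ``by induction hypothesis, $P$ is generated by the algorithm'' without tracking that only an isomorphic copy of $P$ need appear, and hence elides exactly the transport step you identify as the main bookkeeping obstacle.
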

\begin{proof}

    The proof is by induction on the number of $2$-cells in a subpartition. 
    
    The base case is when the number of 2-cells is one. Up to isomorphism, there is only one subpartition $P = \{C_1\}$ where $C_1 = \{(a_1,b_1), (a_2,b_2\}$ as seen in Example \ref{ex:left-align-first-iteration}. 
    
    Assume that we have obtained all subpartitions of cardinality $k-1$ where $k-1 < \left\lfloor \frac{mn}{2} \right\rfloor$. Let $P'$ be any subpartition of cardinality $k$ such that $P' = \{C_1, \ldots, C_k\}$ where each $C_k$ is a 2-cell. By Lemma \ref{lemma:hereditary}, if $P'$ satisfies the conditions $\mathsf{T}_1 - \mathsf{T}_4$, then they are satisfied by any subpartition $P$ such that $P \subseteq P'$. Let $P$ be such that $|P| = |P'|-1$ and $P \subsetneq P'$. Then by induction hypothesis, $P$ is generated by the algorithm. Without loss of generality, assume that $P'\setminus P = \{C_k\}$. Let $\mathcal{A}_P(C_k)$ be the left aligned cell corresponding to the cell $C_k$ and the subpartition $P$.
    
    By algorithm, in the expansion rule for $P$, all left-aligned cells up to isomorphism are generated. Thus $\mathcal{A}_P(C_k)$ is one the children of $P$ to be considered. Let $\Pi = (A,B,P \cup \{C_k\} )$ and $\Pi' = (A,B,P \cup \{\mathcal{A}(C_k)\})$. By Lemma \ref{lemma:left-align-isomorphic-partitions}, $\Pi$ and $\Pi'$ are isomorphic, and $\mathcal{T}(\Pi)$ and $\mathcal{T}(\Pi')$ are isomorphic. Thus, by Corollary \ref{cor:isomorphismvalidsubpartition}, $P \cup \{C_k\})$ is a valid subpartition if and only if $P \cup \{\mathcal{A}(C_k)\})$ is a valid subpartition. Thus, since $P' = P \cup \{C_k\}$ is valid, the algorithm will generate the subpartition $P \cup \{\mathcal{A}(C_k)\}$ since it is also valid. Thus $\Pi'$ is generated by algorithm which is isomorphic to $\Pi = (A,B,P')$.
\end{proof}

\section{Main  Results}

In this section, we prove Theorem \ref{thm:main} and Theorem \ref{thm:noexampletypemn}. Section 4.1 is devoted to the proof of Theorem \ref{thm:main} and Section 4.2 is devoted to the proof of Theorem \ref{thm:noexampletypemn}. Throughout, let \( A := \{a_1, \ldots, a_m\} \) and \( B := \{b_1, \ldots, b_n\} \) be two finite sets. Consider a partition \( P \) of \( A \times B \) that defines the product structure \( \Pi = (A, B, P) \). If \( \Pi \) satisfies the conditions \( \mathsf{T}_1 \) through \( \mathsf{T}_4 \), then \( \Pi \) is non-degenerate and provides a counterexample to the zero-divisor conjecture when \( mn \) is even, and to the unit conjecture when \( mn \) is odd, as established by Theorem \ref{thm:27}. 

This paper examines whether the sufficient conditions (1) and (2) in Theorem \ref{thm:27} can be met for various values of \( m \) and \( n \), and identifies the pairs \( (m, n) \) for which at least one condition fails.

\begin{definition}[Counterexample of type $(m,n)$]
A counterexample to either the zero-divisor conjecture or the unit conjecture is said to be of type \( (m, n) \) if there exists a product structure \( \Pi = (A, B, P) \) where \( |A| = m \), \( |B| = n \), and \( P \) is a partition of \( A \times B \) that satisfies conditions \( \mathsf{T}_1 \) through \( \mathsf{T}_4 \), with \( \Pi \) being either even or odd. 
\end{definition}


\subsection{All valid subpartitions up to cardinality 3}
In this subsection, we find all valid subpartitions and hence, the associated product substructures, taikos and middle-links for cardinality up to $3$. To efficiently track and present the combinatorial cases we utilize and track two lists and two indices for a subpartition $P$ throughout this section across different lemmas:
\begin{enumerate}
    \item \textit{List of vertices utilized}, denoted $\mathcal{V}_A$ and $\mathcal{V}_B$: This list tracks the vertices from sets \( A \) and \( B \) that have already been used in the current subpartition $P$, that is, if $\{(a,b),(a',b')\} \in P$, then $a,a' \in \mathcal{V}_A$ and $b,b'\in \mathcal{V}_B$. We may recall for left-alignment we tracked the largest index of used vertices from $A$ and $B$ for $P$.
    \item  \textit{List of edges available for $P$}, $\mathcal{E}$: This list holds the edges that are still available to be added to future subpartitions. It serves as a pool of edges from which the algorithm selects pairs to form new 2-cells during each iteration.
    \item $i_a^P$ and $j_b^P$: These variables track the indices of the highest vertex from $A$ and $B$, respectively, used for $P$.
\end{enumerate}

    
    

These lists were also utilized for the algorithm \cite{garg-github} to implement left-alignment, lexicographical ordering and various combinatorial conditions. 

\textbf{Notation}: $P_0$ represents the empty subpartition which also served as the root node for tree search Algorithm \ref{alg:dfs-search-dag}. $P_{ij}$ represents $j^{\text{th}}$ child of the subpartition $P_i$. Apriori there is no ordering of children of any subpartition but we can endow an order by using lexicographic order (or any order) on the edges. In our paper, we use lexicographic order on the edges, that is, $(a_{i_1}, b_{j_1})< (a_{i_2},b_{j_2})$ if either $i_1<i_2$ or $i_1=i_2$ and $j_1<j_2$. This endows an ordering on subpartitions.
\begin{lemma}\label{lemma:card1subpartitions}
    Up to isomorphism, there is exactly one product substructure \( \Pi = (A, B, P) \) such that \( |P| = 1 \), i.e., the subpartition \( P \) consists of a single element.
\end{lemma}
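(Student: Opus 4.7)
The plan is a direct application of the left-alignment machinery from Section~3, used as the base case for the inductive tree-search argument of this subsection. A subpartition with $|P| = 1$ consists of a single cell $C$, and since there is no prior structure to align against, the relevant parent is the root $P_0 = \emptyset$ with $i_{P_0} = j_{P_0} = 0$.

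First I would compute the left-aligned representative of $C$. Writing $C = \{(a_{i_1}, b_{j_1}),\ (a_{i_2}, b_{j_2})\}$ as a $2$-cell with $i_1 < i_2$ (the case relevant to the even subpartitions driving the algorithm), every index is ``new'' in the sense of Definition~\ref{def:left-alignment}, so $N_A = \{i_1,i_2\}$, $N_B = \{j_1, j_2\}$ and $r = s = 2$. Example~\ref{ex:left-align-first-iteration} then gives $\mathcal{A}_\emptyset(C) = \{(a_1, b_1),\ (a_2, b_2)\}$, independent of the original indices $i_1, i_2, j_1, j_2$. (If the single cell happens to be a $1$-cell, the analogous computation with $r = s = 1$ yields $\{(a_1, b_1)\}$, handled the same way.)

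Next I would invoke Lemma~\ref{lemma:isomorphic_left_aligned_cells} with $P := \emptyset$, which supplies permutations $(\sigma,\tau) \in S_m \times S_n$ realizing an isomorphism of subpartitions (Definition~\ref{def:iso-subpartition}) between $\{C\}$ and $\{\mathcal{A}_\emptyset(C)\}$. Thus every product substructure with $|P| = 1$ is isomorphic to the canonical representative $P_1 := \{\{(a_1, b_1), (a_2, b_2)\}\}$. Validity of $P_1$ is immediate from the decreasing conditions discussed: a single $2$-cell admits a trivial orientation, cannot contain a fold or a repeated pattern, and produces an acyclic $\mathsf{L}_{AB}$ and $\mathsf{L}_1$, so the girth clauses of $\mathsf{T_4}$ hold vacuously.

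There is essentially no obstacle here; this lemma is the base case needed to start the cardinality induction carried out in the subsequent lemmas of the subsection. The only step requiring genuine bookkeeping is verifying that with the empty parent every vertex index of $C$ lies in $N_A$ or $N_B$, so that the order-preserving bijections $\phi_A, \phi_B$ of Definition~\ref{def:left-alignment} are forced and the aligned cell is uniquely determined. Once that is in place, Lemma~\ref{lemma:isomorphic_left_aligned_cells} delivers the isomorphism and uniqueness up to isomorphism follows.
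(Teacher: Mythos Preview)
Your proposal is correct and takes essentially the same approach as the paper: both reduce to the computation in Example~\ref{ex:left-align-first-iteration} showing that $\mathcal{A}_\emptyset(C)=\{(a_1,b_1),(a_2,b_2)\}$ for any $2$-cell $C$, then conclude uniqueness up to isomorphism. Your write-up is in fact more thorough, explicitly invoking Lemma~\ref{lemma:isomorphic_left_aligned_cells} for the isomorphism and checking validity, whereas the paper simply cites the example and moves on to the bookkeeping needed for the next lemma.
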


\begin{proof}
    The proof follows from Example \ref{ex:left-align-first-iteration} and the taiko corresponding to $\Pi = (A,B,P)$ is illustrated in Figure \ref{fig:first_iteration} where the red dotted edge depicts the next edge to be considered for the next cell. We obtain a subpartition $P_{01} = \{\{(a_1,b_1),(a_2,b_2)\}\}$ by applying the aligned-left condition to select the first $2-$cell. We will simply denote this subpartition as $P_1$.
    \begin{figure}
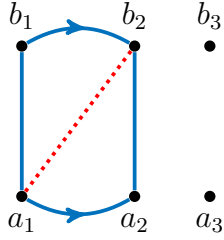

        \centering
        \include{first_iteration}
        \caption{$\mathcal{T}(\Pi)$ where $\Pi = (A,B,P)$ such that $P_1 = \{(a_1,b_1), (a_2,b_2)\}$.}
        \label{fig:first_iteration}
    \end{figure}

    After the first iteration, the list of vertices used for $A$ is $\mathcal{V}_A = (a_1, a_2)$ and for $B$, it is $\mathcal{V}_B = (b_1, b_2)$. Next we update the list of edges of to be considered and available for the cell for the next subpartition, $\mathcal{E}$, as follows: we consider all edges $(a_i, b_j)$ such that $a_i \in \mathcal{V}_A$ and $b_j \in \mathcal{V}_B$. We order these edges in lexicographic order. We will select the `smallest' edge according to this lexicographic order. Thus $\mathcal{E} = ((a_1,b_2),(a_2,b_1))$. $i_a = j_b = 2$.
\end{proof}

We will utilize $P_{1}, \va, \vb, \ecal$ from Lemma \ref{lemma:card1subpartitions} in the following lemma.
\begin{lemma}\label{lemma:card2subpartitions}
    Up to isomorphism, there are exactly three product substructures \( \Pi = (A, B, P) \) such that \( |P| = 2 \) and $P$ is a valid supartition, that is, $\Pi$ satisfies $\mathsf{T}_1-\mathsf{T}_4$. 
\end{lemma}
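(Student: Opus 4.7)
Building on Lemma~\ref{lemma:card1subpartitions}, the unique $|P|=1$ subpartition is $P_1=\{C_1\}$ with $C_1=\{(a_1,b_1),(a_2,b_2)\}$, so $i_P=j_P=2$. My plan is to enumerate the left-aligned children of $P_1$ in the search DAG of Section~\ref{subsec:search-dag}, verify which satisfy $\mathsf{T}_1$--$\mathsf{T}_4$, and then collapse the surviving ones by the $S_m\times S_n$ isomorphism of Definition~\ref{def:iso-subpartition} down to exactly three classes $P_{11}, P_{12}, P_{13}$.

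First, I would parameterize the candidate second $2$-cells $C^\dagger=\mathcal A_{P_1}(C)$ by the pair $(r,s)\in\{0,1,2\}^2$ from Definition~\ref{def:left-alignment}, where $r=|N_A|$ and $s=|N_B|$ count the new $A$- and $B$-indices introduced by $C$. For each of the nine $(r,s)$ boxes, the aligned cells disjoint from the two vertical edges $(a_1,b_1),(a_2,b_2)$ of $P_1$ form an explicit finite list, so the enumeration terminates. In particular, the box $(r,s)=(0,0)$ forces $C^\dagger=\{(a_1,b_2),(a_2,b_1)\}$, which is the $2\times 2$ configuration of Example~\ref{ex:2x2_not_possible} and fails orientability, so this box contributes no valid child.

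For the remaining aligned candidates I would run the four checks. Orientability ($\mathsf{T}_1$) is immediate: one chooses an orientation on each colour class of $\bar{E}_{AB}$ and propagates along each $2$-cell. The no-fold and no-pattern conditions ($\mathsf{T}_2$, $\mathsf{T}_3$) reduce to a vertex-by-vertex inspection, since at this level only the vertices shared by both cells can host two incident horizontal edges, and there are at most two such vertices. Triple-girth ($\mathsf{T}_4$) is automatic because at $|P|=2$ each of $\mathsf{L}_A, \mathsf{L}_B$ carries at most two edges on distinct vertex pairs, so $\mathsf{L}_{AB}$ is cycle-free; and the bipartite middle link $\mathsf{L}_1$ has at most four middle vertices, with any putative $4$-cycle requiring two outer vertices simultaneously adjacent to two common middle vertices, which a direct inspection rules out. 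Thus $\girth(\mathsf{L}_{AB})=\infty$ and $\hgirth(\mathsf{L}_1)=\infty$ in every surviving case, satisfying all three pairs $(6,3),(4,4),(3,6)$.

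Finally, I would quotient the valid aligned candidates by $S_m\times S_n$. Two sources of identifications appear: first, the involution of $P_1$ that simultaneously swaps $a_1\leftrightarrow a_2$ and $b_1\leftrightarrow b_2$, which is easy to read off the aligned data; second, cell-swap isomorphisms $(\sigma,\tau)$ that send the new $2$-cell back to the canonical position of $C_1$ and simultaneously carry $C_1$ to some other aligned cell. The latter are the main obstacle because they are invisible at the level of $C^\dagger$ alone---they move indices above $2$, so they must be uncovered by searching over permutations rather than read off the left-aligned data. Establishing the count as exactly three rather than merely an upper bound hinges on being exhaustive about these hidden identifications, so that each surviving aligned candidate is confirmed isomorphic to one of $P_{11}, P_{12}, P_{13}$.
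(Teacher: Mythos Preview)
Your enumeration over all $(r,s)\in\{0,1,2\}^2$ misses the paper's key normalization: the proof forces the smallest available edge $(a_1,b_2)$ to lie in $C_2$, via the lexicographic convention from the algorithm (\S\ref{subsec:algorithm}) and the edge list $\mathcal{E}$ set up at the end of Lemma~\ref{lemma:card1subpartitions}. Without that constraint your plan cannot succeed, because the literal count of $S_m\times S_n$-isomorphism classes of valid $2$-cell subpartitions exceeds three. For instance, $C_2=\{(a_1,b_3),(a_3,b_2)\}$ (your box $(r,s)=(1,1)$) is valid and not isomorphic to any of $P_{11},P_{12},P_{13}$: it uses three $A$- and three $B$-vertices, yet its two degree-$2$ vertices $a_1$ and $b_2$ are non-adjacent in the bipartite graph, unlike in $P_{12}$. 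The boxes $(2,0),(0,2),(2,1),(1,2),(2,2)$ contribute further distinct valid classes, e.g.\ $C_2=\{(a_1,b_3),(a_2,b_4)\}$ or the fully disconnected $C_2=\{(a_3,b_3),(a_4,b_4)\}$. So your final step, ``collapse the surviving ones \ldots\ down to exactly three classes'', would fail; your worry about overlooking hidden identifications is misdirected---the obstruction is too many classes, not too few.

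The lemma as used in the paper is really about the children of $P_1$ in the algorithm's search tree, where the smallest-edge rule is in force; this is also all that is needed downstream for Lemma~\ref{lemma:card3subpartitions} and Theorem~\ref{thm:main}, since in a full even or odd partition the edge $(a_1,b_2)$ lies in some cell, which one may take to be $C_2$. Once you impose $(a_1,b_2)\in C_2$, the second edge $(a_i,b_j)$ satisfies $i\in\{2,3\}$ and $j\in\{1,3\}$ after left-alignment, giving four candidates; $(a_2,b_1)$ fails orientability by Example~\ref{ex:2x2_not_possible}, and the remaining three are $P_{11},P_{12},P_{13}$---a one-line case check that replaces your nine-box enumeration and isomorphism quotient entirely.
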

\begin{proof}
    Without loss of generality, by Lemma~\ref{lemma:card1subpartitions} we take $C_1=\{(a_1,b_1),(a_2,b_2)\}$. This means $P_{01} \subsetneq P$. By our lexicographic convention and by Lemma~\ref{lemma:card1subpartitions}, we require the smallest available edge $e_*=(a_1,b_2)\in\mathcal{E}$ to lie in $C_2$. 

    Let $(a_i,b_j)$ be the second edge of $C_2$. By the disjoint vertex condition, $i\neq 1$ and $j\neq 2$. Moreover, by left-alignment of $C_2$, $i,j \le 3$. Now, we consider the possibilities for the second edge in $C_2$. Let $i_a^{P_0}$ and $j_b^{P_0}$ be the highest indices of vertices used from $A$ and $B$, respectively, for $P_0$. Then $i_a^{P_0} = 2$ and $j_b^{P_0} = 2$.  

    All possible choices of $(a_i,b_j)$ are $(a_2, b_1), (a_2, b_3), (a_3, b_1)$ and $(a_3, b_3)$. By Example \ref{ex:2x2_not_possible}, $(a_2,b_1)$ yields an unorientable product structure and hence, failure of $\mathsf{T}_1$. Up to permutations, there are three possibilities for $C_2$ as shown in Figure \ref{fig:flush-left-second-cell}. Note that the half-girth in each case is infinite.
    
    \begin{figure}[]
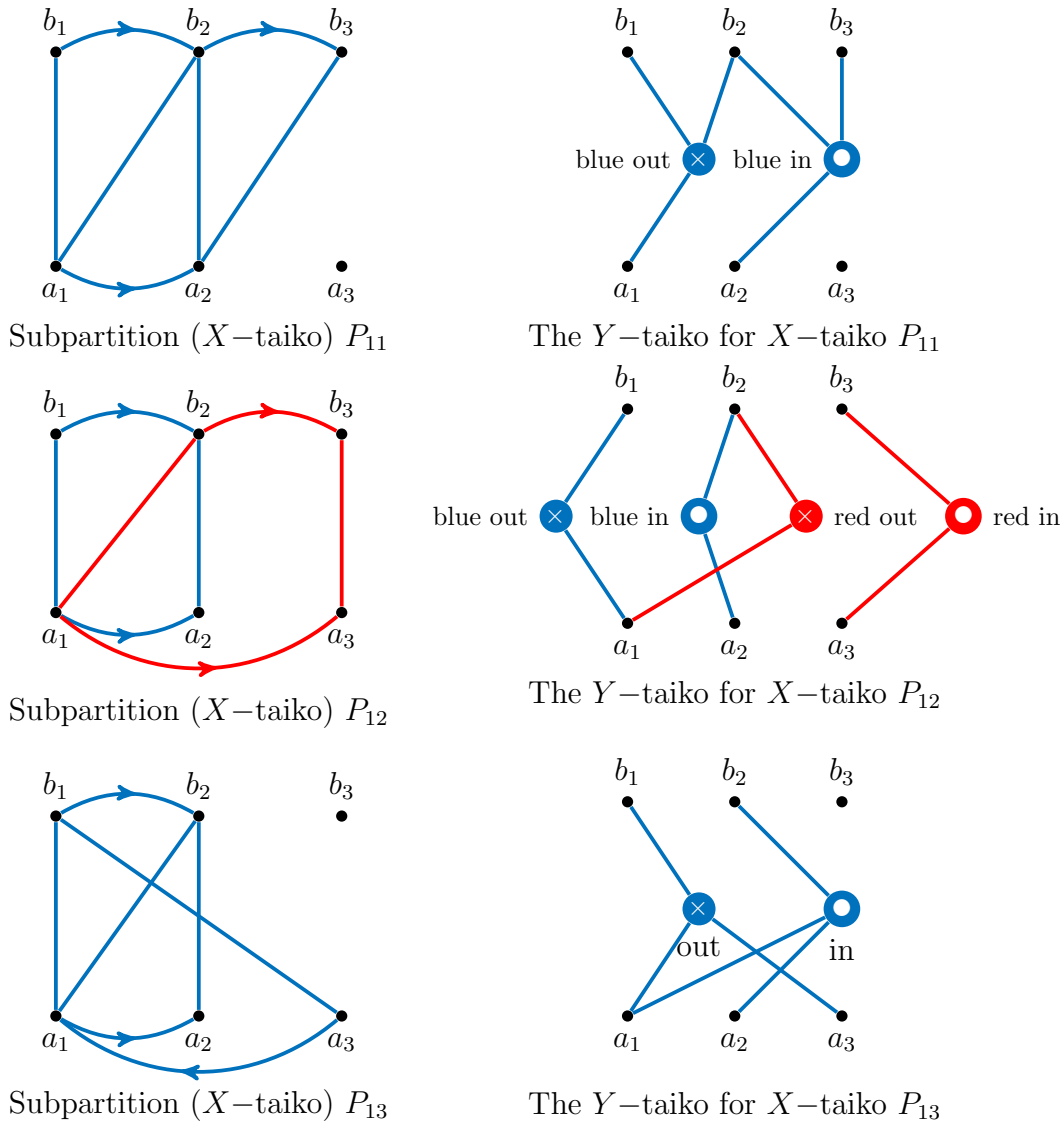

    \centering
    \include{flushleft_S1_children}
    \caption{$\mathcal{T}(\Pi)$ ($X-$taiko) for valid children of $P_{1}$ and their corresponding $\mathsf{L}_1$ graphs.}
    \label{fig:flush-left-second-cell}
    \end{figure}

    We update the lists $\mathcal{V}_A, \mathcal{V}_B, \mathcal{E}, i_a$ and $j_b$ as follows:
\begin{enumerate}
    \item For Subpartition $P_{11} = \{C_1, C_2\}$ where $C_1 = \{(a_1,b_1),(a_2,b_2)\}$ and $C_2 = \{(a_1,b_2),(a_2,b_3)\}$:
    \begin{itemize}
        \item $\mathcal{V}_A = (a_1, a_2)$, $\mathcal{V}_B = (b_1, b_2, b_3)$.
        \item The last vertex of highest index used from $A$ is $a_2$ and it is tracked by$i_a^{P_{11}} = 2$, and from $B$, it is $b_3$ tracked by $j_b^{P_{11}} = 3$.
        \item From $6$ possible edges between $\mathcal{V}_A$ and $\mathcal{V}_B$, we utilized $4$ for $C_1, C_2$. Therefore, $$\mathcal{E} = ((a_2,b_1), (a_1,b_3)).$$
    \end{itemize}
    \item For Subpartition $P_{12} = \{C_1, C_2\}$ where $C_1 = \{(a_1,b_1),(a_2,b_2)\}$ and $C_2 = \{(a_1,b_2),(a_3,b_3)\}$:
    \begin{itemize}
        \item $\mathcal{V}_A = (a_1, a_2,a_3)$, $\mathcal{V}_B = (b_1, b_2, b_3)$
        \item The last vertex of highest index used from $A$, tracked by $i_a^{P_{12}} =3$, is $a_3$, and from $B$, tracked by $j_b^{P_{12}}= 3$, is $b_3$
        \item From $9$ possible edges between $\mathcal{V}_A$ and $\mathcal{V}_B$, we utilized $4$ for $C_1, C_2$. Hence, $$\mathcal{E} = ((a_2,b_1), (a_1,b_3),(a_2,b_3), (a_3,b_1), (a_3,b_2)). $$
    \end{itemize}
    \item For Subpartition $P_{13} = \{C_1, C_2\}$ where $C_1 = \{(a_1,b_1),(a_2,b_2)\}$ and $C_2 = \{(a_3,b_1),(a_2,b_3)\}$:
    \begin{itemize}
        \item $\mathcal{V}_A = (a_1, a_2, a_3)$, $\mathcal{V}_B = (b_1, b_2)$
        \item The last vertex of highest index used from $A$, tracked by $i_a^{P_{13}}=3$, is $a_3$ and from $B$, tracked by $j_b^{P_{13}}=2$, is $b_2$
        \item From $6$ possible edges between $\mathcal{V}_A$ and $\mathcal{V}_B$, we utilized $4$ for $C_1, C_2$. Hence, $$\mathcal{E} = ((a_2,b_1), (a_3,b_2)).$$
    \end{itemize}
\end{enumerate}
\end{proof}

\begin{lemma}\label{lemma:card3subpartitions}
    Up to isomorphism, there are exactly five product substructure \( \Pi = (A, B, P^{(3)}) \) such that \( |P^{(3)}| = 3 \) and $P$ satisfies $\mathsf{T}_1-\mathsf{T}_4$. 
\end{lemma}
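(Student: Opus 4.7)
The plan is to iterate the case analysis of Lemma~\ref{lemma:card2subpartitions}. Any valid cardinality-3 subpartition $P^{(3)}=\{C_1,C_2,C_3\}$ contains, after removal of its last cell, a valid cardinality-2 subpartition (by the decreasing property, Lemma~\ref{lemma:hereditary}), so up to isomorphism every such $P^{(3)}$ is obtained by attaching one admissible $2$-cell $C_3$ to one of the three representatives $P_{11}$, $P_{12}$, $P_{13}$ produced in Lemma~\ref{lemma:card2subpartitions}. I would therefore treat the three parents in turn, reading off the associated data $(\va,\vb,\ecal,i_a,j_b)$ recorded at the end of that lemma.

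For each parent $P_{1k}$ I would run the expansion rule of Algorithm~\ref{alg:dfs-search-dag}: take the lexicographically smallest available edge $e_\ast\in\ecal(P_{1k})$ (forced to lie in $C_3$ by the ordering convention), and then enumerate the possible partner edges $(a_i,b_j)$ that are disjoint from $e_\ast$ and satisfy the left-alignment bounds $i\le i_a^{P_{1k}}+1$ and $j\le j_b^{P_{1k}}+1$ (using $+2$ in a coordinate only if both indices of $C_3$ are new there). Lemma~\ref{lemma:isomorphic_left_aligned_cells} guarantees that every isomorphism class of cardinality-3 extension of $P_{1k}$ is represented at least once in this finite candidate list; since we expanded from a canonical left-aligned parent and immediately left-align the new cell, we also avoid the bulk of trivial symmetry duplicates.

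For each candidate, I would verify conditions $\mathsf{T}_1$--$\mathsf{T}_4$. By Lemma~\ref{lemma:hereditary} it suffices to check the constraints newly created by $C_3$: orientability reduces to testing that the two horizontal edges created by $C_3$ can be oriented compatibly with the orientation already fixed on $P_{1k}$, where the only non-trivial failure is the $2\times 2$ obstruction of Example~\ref{ex:2x2_not_possible}; the no-fold and no-pattern checks are local to the endpoints of the new horizontal edges; and the triple-girth check requires re-examining only those new cycles in $\mathsf{L}_{AB}$ and $\mathsf{L}_1$ that pass through at least one of the two new horizontal edges, which is a short finite verification.

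The main obstacle is a bookkeeping one rather than a conceptual one: for $P_{12}$ the list $\ecal$ has five entries and many ordered choices of partner survive the local checks, and some extensions reached from different parents $P_{1k}$ may turn out to be isomorphic as taikos. To avoid overcounting, I would organize the surviving candidates branch by branch, attach to each a canonical invariant (the multiset of color classes on $E_{AB}$ together with the isomorphism type of the underlying $X$-taiko and of $\mathsf L_1$), and then merge duplicates across branches using Definitions~\ref{def:iso-taiko-strong} and~\ref{def:iso-middle-link}. Carrying out this merge yields exactly five isomorphism classes of valid cardinality-3 subpartitions, as claimed; updated lists $(\va,\vb,\ecal,i_a,j_b)$ for each of the five representatives would then be recorded, ready to feed into the cardinality-4 step.
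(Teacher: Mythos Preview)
Your plan is essentially the paper's own approach: reduce to the three cardinality-$2$ parents $P_{11},P_{12},P_{13}$ via the hereditary property, fix the smallest available edge $(a_2,b_1)$ in $C_3$, enumerate left-aligned partners, and test $\mathsf T_1$--$\mathsf T_4$ on each child. The paper does exactly this, listing five children of $P_{11}$, seven of $P_{12}$, and five of $P_{13}$, and computing $\hgirth(\mathsf L_1)$ and $\girth(\mathsf L_{AB})$ for each; the survivors are $P_{115},P_{123},P_{125},P_{127},P_{135}$, with the remaining candidates either failing triple-girth or being isomorphic to one of these. Since the lemma asserts an exact count, the enumeration \emph{is} the proof, so your write-up would need to actually execute the seventeen cases rather than describe the procedure; your added care about cross-branch isomorphism merging is a point the paper handles only by a closing remark.
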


\begin{proof}
    For each of the subpartitions from the second iteration, we proceed similarly by selecting the next smallest edge from $L_3$, that is, $(a_2,b_1)$ according to the lexicographic order. We attempt to form new 2-cells, ensuring that the disjoint vertex condition and the conditions $\mathsf{T_1} - \mathsf{T_4}$ are satisfied. As in the second iteration, We keep track of the last vertex used for each $A$ and $B$. We add the next vertex $a_i$ (and $b_j$) to the vertex with the highest index used previously if it already does not exist in the list.

For instance, for subpartition $P_{11}$, we pick the smallest edge from $L_3$, which is $(a_2,b_1)$. The vertices of highest index used for $P_{11}$ are $a_2$ and $b_3$. Thus we add $a_3$ to $\mathcal{V}_A$ and $b_4$ to $\mathcal{V}_B$. We add all new vertical edges due to these new vertices in the list $L_3$. We pair the first edge with another edge from $L_3$, like $(a_1,b_3)$, to form a new 2-cell $C_3$. The algorithm will check if this pair satisfies the disjoint vertex condition and the conditions $\mathsf{T_1} - \mathsf{T_4}$ and will prune any invalid configurations. 

Up to permutations, we obtain $5$ children of $P_{11}$ (also shown in Figure \ref{fig:case1children}):
\begin{enumerate}
    \item $P_{111} = P_{11} \cup\{ C_3\}$ where $C_3 = \{(a_2,b_1),(a_1,b_3)\}$. This leads to repetition of pattern of (blue in, blue out), which is equivalent to \textit{half-girth} $2$.
    \item $P_{112} = P_{11} \cup\{ C_3\}$ where $C_3 = \{(a_2,b_1),(a_3,b_2)\}$. This leads to a cycle of \textit{half-girth} $2$ given as $(a_2, \text{ blue out}, b_2, \text{ blue in}, a_2)$ as shown in Figure \ref{fig:case1children}.
    \item $P_{113} = P_{11} \cup\{ C_3\}$ where $C_3 = \{(a_2,b_1),(a_3,b_3)\}$. This leads to \textit{half-girth} $3$ as shown.
    \item $P_{114} = P_{11} \cup\{ C_3\}$ where $C_3 = \{(a_2,b_1),(a_1,b_4)\}$. This leads to a blue edge incident at $b_1$ which is not possible.
    \item $P_{115} = P_{11} \cup\{ C_3\}$ where $C_3 = \{(a_2,b_1),(a_3,b_4)\}$. This leads to \textit{half-girth} $3$. Also, note that this is only valid child of $P_{11}$.
\end{enumerate}

\begin{figure}[]
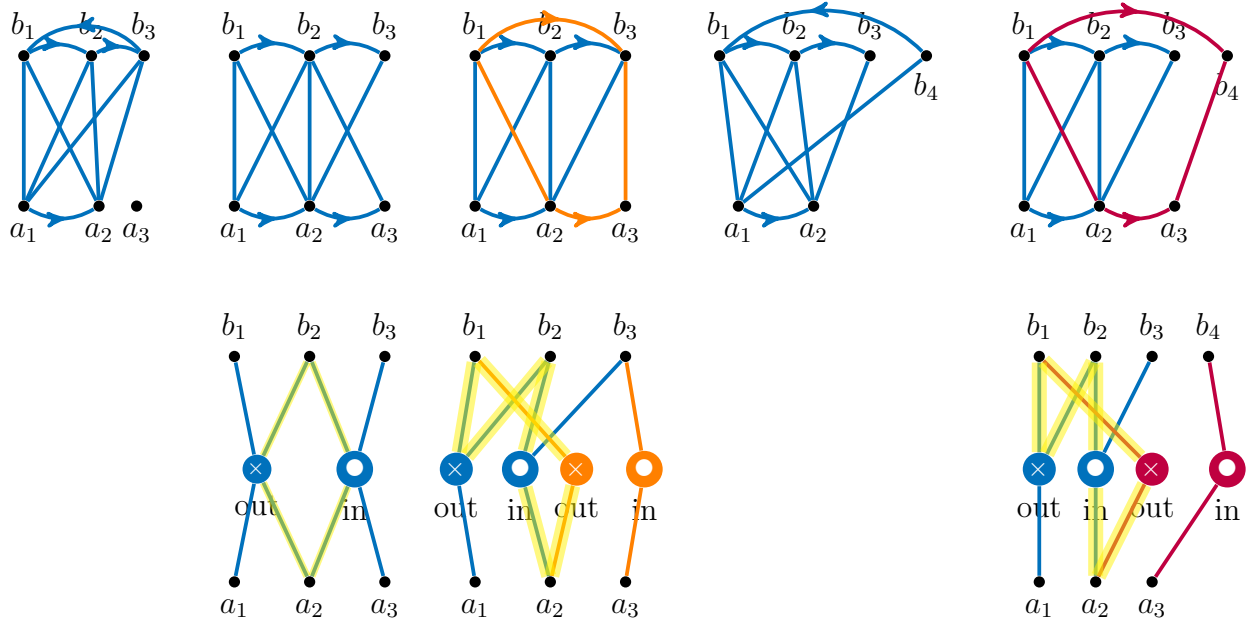

    \include{girth5c11children}
    \caption{All possible children (up to permutations) of $P_{11}$}
    \label{fig:case1children}
\end{figure}

Similarly, for $P_{12}$ we obtain $7$ children (up to permutations) (also shown in Figure \ref{fig:s012children1}):
\begin{enumerate}
    \item $P_{121} = P_{13} \cup\{ C_3\}$ where $C_3 = \{(a_2,b_1),(a_1,b_3)\}$. This subpartition has \textit{half-girth} $3$.
    \item $P_{122} = P_{13} \cup\{ C_3\}$ where $C_3 = \{(a_2,b_1),(a_3,b_2)\}$. This subpartition has \textit{half-girth} $3$.
    \item $P_{123} = P_{13} \cup\{ C_3\}$ where $C_3 = \{(a_2,b_1),(a_1,b_4)\}$. This subpartition has \textit{half-girth} $3$.
    \item $P_{124} = P_{13} \cup\{ C_3\}$ where $C_3 = \{(a_2,b_1),(a_3,b_4)\}$. This subpartition has \textit{half-girth}$(L_1)=4$ and $\text{girth}(L_{AB}) =\min\{3, \infty\} = 3$. Thus it fails triple girth condition.
    \item $P_{125} = P_{13} \cup\{ C_3\}$ where $C_3 = \{(a_2,b_1),(a_4,b_2)\}$. This subpartition has \textit{half-girth} $3$.
    \item $P_{126} = P_{13} \cup\{ C_3\}$ where $C_3 = \{(a_2,b_1),(a_4,b_3)\}$. This subpartition has \textit{half-girth}$(L_1)=4$ and $\text{girth}(L_{AB}) =\min\{\infty,3\} = 3$. Thus it fails triple girth condition.
    \item $P_{127} = P_{13} \cup\{ C_3\}$ where $C_3 = \{(a_2,b_1),(a_4,b_4)\}$. This subpartition has \textit{half-girth}$(L_1)=4$ and $\text{girth}(L_{AB}) = \infty$.
    
\end{enumerate}

\begin{figure}[]
    \centering
    \includegraphics[width=1\linewidth]{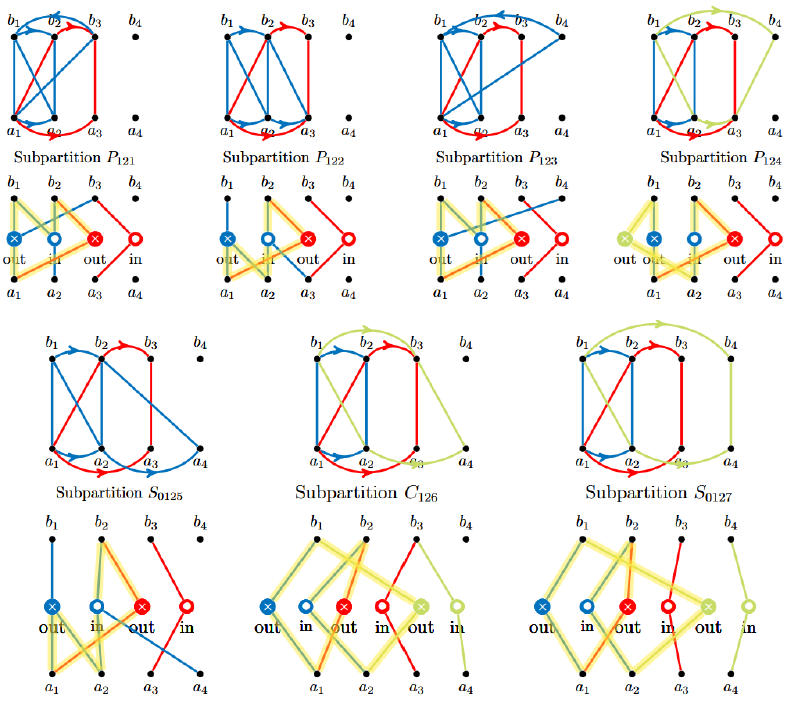}
    \caption{Product graph of valid children of $P_{12}$ along with their $Y-$taikos on the bottom.}
    \label{fig:s012children1}
\end{figure}


Furthermore, for the last child $P_{13}$ of $P_{1}$, we obtain the following $5$ children (also shown in Figure \ref{fig:S013children}):

\begin{enumerate}
    \item $P_{131} = P_{12} \cup\{ C_3\}$ where $C_3 = \{(a_2,b_1),(a_1,b_3)\}$. This subpartition has \textit{half-girth} $2$.
    \item $P_{132} = P_{12} \cup\{ C_3\}$ where $C_3 = \{(a_2,b_1),(a_3,b_2)\}$. This subpartition has \textit{half-girth} $2$.
    \item $P_{133} = P_{12} \cup\{ C_3\}$ where $C_3 = \{(a_2,b_1),(a_3,b_3)\}$. This subpartition has \textit{half-girth} $3$.
    \item $P_{134} = P_{12} \cup\{ C_3\}$ where $C_3 = \{(a_2,b_1),(a_4,b_2)\}$. This subpartition has \textit{half-girth} $2$.
    \item $P_{135} = P_{12} \cup\{ C_3\}$ where $C_3 = \{(a_2,b_1),(a_4,b_3)\}$. This subpartition has \textit{half-girth} $3$.
    
\end{enumerate}
\begin{figure}[]
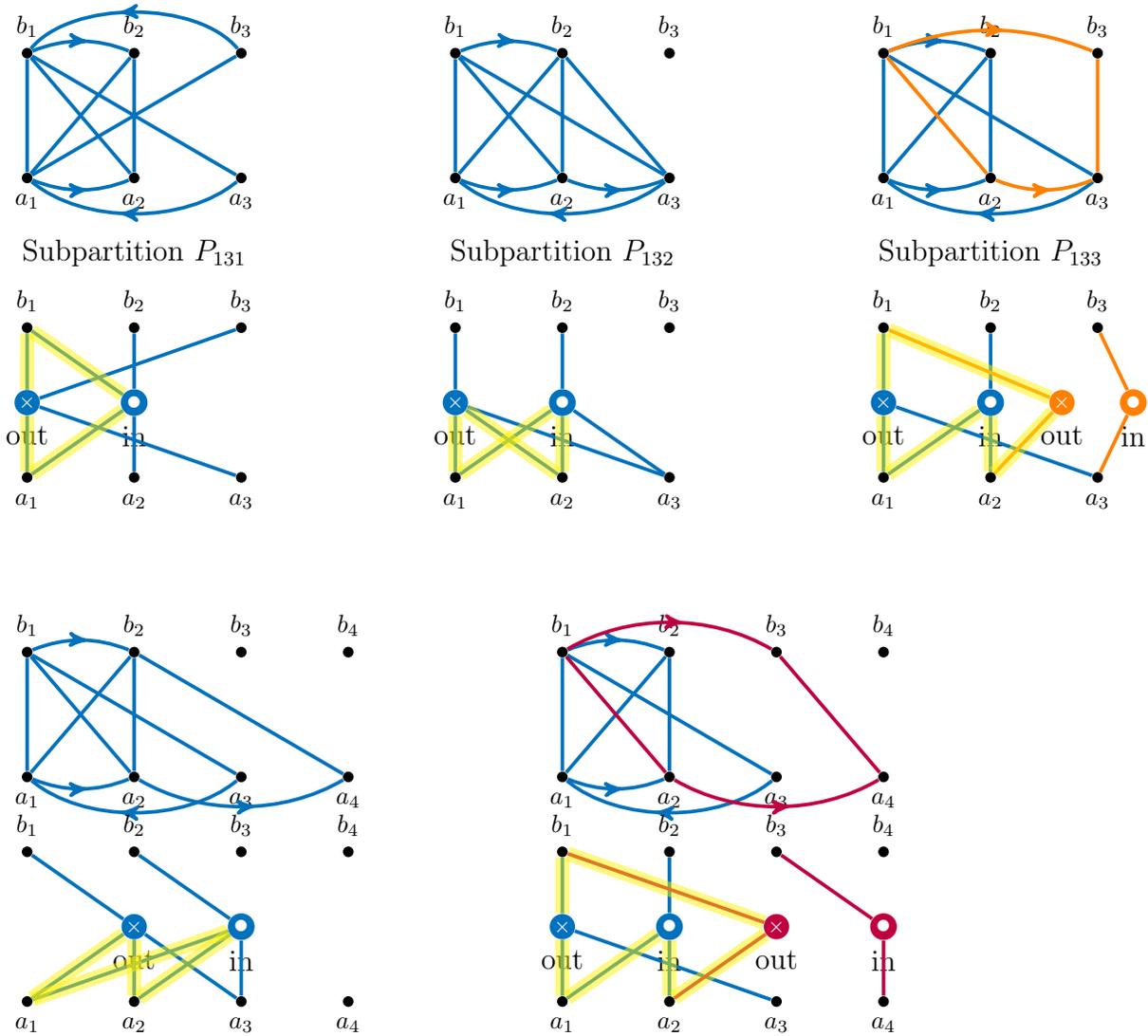

    \centering
    \include{girth5c13children}
    \caption{Possible children of $P_{13}$. $P_{131}, P_{132}, P_{133}$ and $P_{134}$ are not valid children}
    \label{fig:S013children}
\end{figure}

Thus, up to isomorphism (i.e., up to relabeling of $A$ and $B$), the only valid subpartitions of cardinality $3$ are
\[
P_{115},\; P_{123},\; P_{125},\; P_{127},\; \text{and } P_{135}.
\]

All other candidates either violate $\mathsf{T}_1$–$\mathsf{T}_4$ (e.g., unorientable or
failing the triple–girth condition) or are isomorphic to one of these five.
\end{proof}


\subsection{Proof of Theorem \ref{thm:main}}


If $|A| \le 2$ and $|B|\le 2$, there is no orientable product structure as shown in Example \ref{ex:2x2_not_possible}. Thus we can assume henceforth that $|P|\ge 3$. Choose any three $2$–cells of $P$ and let $P^{(3)}\subseteq P$ be the resulting subpartition. By
Lemma~\ref{lemma:card3subpartitions}, up to isomorphism there are exactly five possibilities for $P^{(3)}$, and in each case the associated middle–link graph satisfies $\hgirth\!\big(\mathsf L_1(P^{(3)})\big)\le 4$
(see Figures~\ref{fig:case1children}, \ref{fig:s012children1}, \ref{fig:S013children}).
Since $P^{(3)}\subseteq P$, Corollary~\ref{cor:decreasing_girth} yields
\[
\hgirth\!\big(\mathsf L_1(P)\big)\ \le\ \hgirth\!\big(\mathsf L_1(P^{(3)})\big)\ \le\ 4.
\]
\qed

\begin{remark}
    The triple-girth condition, $\mathsf{T}_4$, states that we need to look for an oriented product structure such that the girth pairs are either $(6,3)$, $(4,4)$ or $(3,6)$. By Theorem \ref{thm:main}, the conditions $(3,6)$ can never hold thus leading to pruning of one-third of the computation.
\end{remark}

\subsection{Counterexamples of type $(m,n)$}

Although it is well known in the literature that if $\alpha\beta = 0$ in a group ring $R[G]$ then $|\text{supp}(\alpha)|>2$ and $|\text{supp}(\beta)|>2$. For instance, see \cite{Schweitzer2013}. We show a rather obvious result to demonstrate the combinatorial technique to prove such statements for counterexamples of type $(m,n)$.

\begin{theorem}\label{thm:noexampletype2n}
    There are no counterexamples of type $(2,n)$ or $(n,2)$ where $n$ is a positive integer.
\end{theorem}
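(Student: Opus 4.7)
The plan is to show that when $m=2$ the product structure is so constrained that the middle link $\mathsf{L}_1$ always contains a $4$-cycle, forcing $\hgirth(\mathsf{L}_1)\le 2$ and so violating the triple-girth condition $\mathsf{T}_4$. Since $|A|=2$, the disjoint-vertex condition requires every $2$-cell to use both $a_1$ and $a_2$; in particular no $2$-cell exists when $n=1$, so no partition of $A\times B$ is possible in that case. Because $mn=2n$ is always even, we only need to handle even partitions, and the claim for type $(n,2)$ follows by swapping the roles of $A$ and $B$.

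First I would observe that every $2$-cell is of the form $\{(a_1,b_j),(a_2,b_{\sigma(j)})\}$, where $\sigma$ is a derangement of $\{1,\dots,n\}$: it is a bijection because $P$ is a partition of $A\times B$ and every $B$-vertex appears exactly once paired with $a_1$ and once with $a_2$, and it is fixed-point free by the disjoint-vertex condition. Second, since the $A$-horizontal edge of every cell is the common edge $\{a_1,a_2\}$, the equivalence relation $\sim$ on $\bar E_{AB}$ collapses the $A$-edge $\{a_1,a_2\}$ together with every $B$-edge $\{b_j,b_{\sigma(j)}\}$ into a single class. Hence the taiko uses exactly one color, and the middle-vertex set reduces to $M=\{(1,\mathrm{in}),(1,\mathrm{out})\}$.

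Using the combinatorial construction of $\mathsf{L}_1$, the vertex $(1,\mathrm{out})$ is adjacent to every source of a color-$1$ horizontal edge, and $(1,\mathrm{in})$ to every target. Since $\sigma$ is a bijection, each $b_j\in B$ occurs both as a source (of $(b_j,b_{\sigma(j)})$) and as a target (of $(b_{\sigma^{-1}(j)},b_j)$), hence is adjacent in $\mathsf{L}_1$ to both $(1,\mathrm{out})$ and $(1,\mathrm{in})$. Picking any two distinct $b_j,b_k\in B$, which is possible because $n\ge 2$, produces the undirected $4$-cycle
\[
b_j \;-\; (1,\mathrm{out}) \;-\; b_k \;-\; (1,\mathrm{in}) \;-\; b_j ,
\]
so $\girth(\mathsf{L}_1)\le 4$ and $\hgirth(\mathsf{L}_1)\le 2$, contradicting the requirement $\hgirth(\mathsf{L}_1)\ge 3$ from $\mathsf{T}_4$. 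This may be viewed as the degenerate one-color instance of Theorem~\ref{thm:main}.

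The only nontrivial ingredient is the single-color observation, which relies essentially on $|\bar E_A|\le 1$ — a rigidity peculiar to $m=2$. No casework on $\sigma$ is required beyond $n\ge 2$, which guarantees two distinct $B$-vertices to form the $4$-cycle. The boundary case $n=2$ is automatically covered because orientability $\mathsf{T}_1$ already fails there by Example~\ref{ex:2x2_not_possible}, but the $\mathsf{T}_4$ argument above handles every $n\ge 2$ uniformly for any $\Pi$ satisfying $\mathsf{T}_1$.
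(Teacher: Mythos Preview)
Your proof is correct and reaches the same conclusion as the paper---namely that $\hgirth(\mathsf{L}_1)\le 2$, so $\mathsf{T}_4$ fails---but by a cleaner and more structural route. The paper normalises via left-alignment to fix $C_1=\{(a_1,b_1),(a_2,b_2)\}$ and $C_2=\{(a_1,b_2),(a_2,b_3)\}$, then argues that the cell containing $(a_2,b_1)$ forces a horizontal $B$-edge incoming at $b_1$, producing a $4$-cycle in $\mathsf{L}_1$ through $b_1$ and $b_2$. You instead encode the entire even partition as a derangement $\sigma$ of $\{1,\dots,n\}$, observe that $|\bar E_A|=1$ collapses everything to a single color, and deduce that \emph{every} $b_j$ is adjacent to both middle vertices $(1,\mathrm{in})$ and $(1,\mathrm{out})$. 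This buys you a uniform argument with no normalisation or case analysis, and it makes transparent why $m=2$ is special: the single $A$-edge forces a single equivalence class. The paper's approach, by contrast, fits its broader left-alignment framework and illustrates the recursive method used for larger $m$.
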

\begin{proof}
    The cases for $(2,n)$ and $(n,2)$ are identical by symmetry, therefore, we will only prove the result for $(2,n)$. When $n=2$, by Example \ref{ex:2x2_not_possible}, the condition $\mathsf{T}_1$ fails. 

    If $P$ is a partition of $A\times B$ then $|P| = n$. Let $C_1, \ldots, C_n$ denote $n$ 2-cells of $P$. By the left-alignment condition, we can assume that $C_1 = \{(a_1,b_1),(a_2,b_2)\} \in P$. Without loss of generality, we can assume that $(a_1,b_2) \in C_2$. By left-alignment, $(a_2,b_3) \in C_2$. Now consider the cell containing the edge $(a_2, b_1)$. We can assume it is $C_3$. Any edge $(a_1, b_j)$ will yield a horizontal edge $(b_j, b_1)$ in $\mathsf{L}_B$ which leads to girth $2$ in the associated middle-link graph, $\mathsf{L}_1$.
\end{proof}

\begin{theorem}\label{thm:noexampletype3n}
    There are no counterexamples of type $(3,n)$ or $(n,3)$ where $n$ is a positive integer.
\end{theorem}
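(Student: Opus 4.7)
The plan is to prove the result for type $(3,n)$; the $(n,3)$ case follows by exchanging the roles of $A$ and $B$ (this symmetry only swaps $\mathsf L_A$ and $\mathsf L_B$ and leaves $\mathsf{T}_1$--$\mathsf{T}_4$ invariant, as noted for the triple-girth condition). The key observation is that $|A|=3$ severely restricts $\mathsf L_A$: there are only three possible undirected horizontal edges $\{a_1,a_2\},\{a_1,a_3\},\{a_2,a_3\}$, and the only possible cycle in $U(\mathsf L_A)$ is a triangle on $\{a_1,a_2,a_3\}$.

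First I would use the girth constraint. By Theorem~\ref{thm:main}, $\hgirth(\mathsf L_1)\le 4$, so the triple-girth choice $(3,6)$ is unavailable, and the remaining options $(6,3)$ and $(4,4)$ both force $\girth(\mathsf L_{AB})\ge 4$. Restricting to the $A$-side, $U(\mathsf L_A)$ has girth at least $4$ on three vertices, hence contains no triangle and has at most two undirected edges.

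Next I would run a degree count on $A$. For each $\{i,i'\}\subseteq\{1,2,3\}$ let $k_{ii'}$ be the number of $2$-cells of $P$ whose two vertical edges have $A$-coordinates $\{a_i,a_{i'}\}$; then $\{a_i,a_{i'}\}\in\bar E_A$ iff $k_{ii'}>0$. Each vertex $a_i$ is incident to exactly $n$ vertical edges of $\mathcal G(A,B)$, and each lies in a unique cell of $P$. In the even case ($n$ even), every such edge lies in a $2$-cell, giving
\[
k_{12}+k_{13}=k_{12}+k_{23}=k_{13}+k_{23}=n,
\]
so $k_{12}=k_{13}=k_{23}=n/2\ge 1$ for $n\ge 2$. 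In the odd case the unique $1$-cell has some $A$-coordinate $a_\ell$; the analogous system then gives $k_{ii'}=(n-1)/2$ for the two pairs $\{i,i'\}$ containing $\ell$ and $k_{i'i''}=(n+1)/2$ for the pair disjoint from $\ell$, so for $n\ge 3$ all three counts are strictly positive.

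Finally I would conclude. Whenever $n\ge 2$, all three $k_{ii'}$ are positive, so $U(\mathsf L_A)$ is the complete graph on three vertices and contains a triangle, contradicting the girth bound from Step~1. The corner cases are immediate: $n=1$ forces $\alpha\beta$ to be supported on $|B|=1$, and a $2$-cell requires two distinct $b$-coordinates, so no $2$-cell exists, and no even or odd partition of $A\times B$ is possible (three $1$-cells is neither). There is no real obstacle here beyond the case split between even and odd $P$; the argument is a short finite case analysis driven entirely by $|A|=3$ and the girth bound $\girth(U(\mathsf L_A))\ge 4$.
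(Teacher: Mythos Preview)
Your argument is correct and takes a genuinely different route from the paper's. The paper argues that $\mathsf L_A$ cannot contain a triangle (same use of Theorem~\ref{thm:main}), but then splits on whether the (at most two) $A$-edges lie in one color class or two, and in the second case invokes the classification of cardinality-$3$ subpartitions (Lemma~\ref{lemma:card3subpartitions}) to reduce to $P_{115}$ or $P_{123}$ and chase a short cycle in $\mathsf L_1$ by hand. Your proof bypasses all of this with a clean degree/counting argument on the $A$-side: the system $k_{12}+k_{13}=k_{12}+k_{23}=k_{13}+k_{23}=n$ (or its odd variant) forces every $k_{ii'}>0$ once $n\ge 2$, so $U(\mathsf L_A)$ is the full triangle and $\girth(\mathsf L_A)=3$, contradicting $\girth(\mathsf L_{AB})\ge 4$. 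This is more elementary and entirely self-contained---it needs neither the subpartition classification nor any analysis of $\mathsf L_1$ beyond Theorem~\ref{thm:main}. The paper's approach, by contrast, is tailored to its algorithmic framework and illustrates how the search-tree lemmas feed into the theoretical results; your counting argument would not extend so readily to $m=4,5$ (where $U(\mathsf L_A)$ can avoid short cycles while still having many edges), which is presumably why the paper favors the case-analysis style.
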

\begin{proof}

    The proof is similar to the proof of Theorem \ref{thm:noexampletype2n}. Again, we will only work with the case when $A = \{a_1, a_2, a_3\}$ and $|B| = n \ge 3$. Note that  by Theorem \ref{thm:main}, we cannot have a cycle in $\mathsf{L}_A$ because otherwise $\girth(L_{AB}) = 3$ and $\girth(\mathsf{L}_1) \le 4$ and the subpartition would fail the triple girth condition. We have two cases:
    \begin{enumerate}
        \item Both edges in $\mathsf{L}_A$ are of the same color, that is, there is only one equivalence class of horizontal edges. The proof is identical to the proof of Theorem \ref{thm:noexampletype2n}.
        \item Both edges are of different color. For a subpartition of cardinality $3$, the only possibilities up to permutation are $P_{115}$ and $P_{123}$ by Lemma \ref{lemma:card3subpartitions}. For a child $P'$ of $P_{115}$, there will be horizontal edge $(b_3, b_j)$ for some $j \notin\{1,2,3\}$ such that it belongs to the same equivalence class as $(b_1, b_2)$ and $(b_2,b_3)$. Thus, $\girth(\mathsf{L}_1) = 2$ for $P'$.
        Similarly, consider the child $P'$ of $P_{123}$ so that $(a_2,b_4) \in C_k$ for some $C_k \in P'$. Then there is horizontal edge $(b_j,b_4)$ for some $j \notin\{1,2,3,4\}$. The edges $(b_1, b_2), (b_2, b_3), (b_1, b_4)$ and $(b_j,b_4)$ belong the same horizontal class thus leading to $\girth(\mathsf{L}_1) = 2$.
    \end{enumerate}
    \vspace{-0.5cm}
\end{proof}

\begin{theorem}\label{thm:4nnotpossible}
There are no counterexamples of type $(4,n)$ or $(n,4)$ where $n$ is a positive integer.
\end{theorem}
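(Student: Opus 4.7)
My plan is to treat the $(4,n)$ case, the $(n,4)$ case being symmetric. Since $mn=4n$ is even, any candidate counterexample $\Pi=(A,B,P)$ has $|P|=2n$, and the disjoint-vertex condition rules out $n=1$, so I take $n\ge 2$. By Theorem~\ref{thm:main} combined with the triple-girth condition $\mathsf T_4$, the option $(3,6)$ is unavailable and every realization forces $\girth(\mathsf L_A)\ge 4$; that is, $\mathsf L_A$ is triangle-free on four vertices. A degree count at each $a_i$ using $\sum_{i'\ne i} e_{ii'}=n$ (where $e_{ii'}$ counts the cells with horizontal $A$-edge $\{a_i,a_{i'}\}$) together with $e_{ii'}\ge 1$ on every edge of $\mathsf L_A$ rules out the path $P_4$ and the star $K_{1,3}$, leaving only (I) a perfect matching $\{a_1,a_2\}\sqcup\{a_3,a_4\}$ or (II) the $4$-cycle.

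For Case (I), I would invoke the no-$2$-cycle consequence of orientability illustrated in Example~\ref{ex:2x2_not_possible} to conclude that each $b_j$ participates in two cells of each of the two colors, once as ``tail'' and once as ``head''; hence $b_j$ is adjacent in $\mathsf L_1$ to all four middle vertices. For $n\ge 2$ any pair $b_j, b_k$ then shares all four middle neighbors, producing a $4$-cycle in $\mathsf L_1$ and contradicting $\mathsf T_4$.

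For Case (II), $\girth(\mathsf L_A)=4$ forces the $(4,4)$ instance of $\mathsf T_4$, requiring $\hgirth(\mathsf L_1)\ge 4$. The first step is to observe that two adjacent edges of $\mathsf L_A$ (sharing a common $a_j$) cannot lie in the same color class: a shared $B$-edge between two such cells meeting at $a_j$ forces equal $\mathsf L_B$-orientations by the simplicity of $\mathsf L_B$, producing a fold at some $b\in B$ and violating $\mathsf T_2$. Consequently the number of color classes $k$ lies in $\{2,3,4\}$. For $k\le 3$, one has $|M|=2k\le 6$ and each $b_j$ contributes four distinct middle neighbors, so inclusion--exclusion gives $|N(b_j)\cap N(b_k)|\ge 2$ for every pair, yielding a $4$-cycle; the exceptional situation where two cells at a common $b_j$ share a horizontal edge is absorbed by a parallel fold-style argument. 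For $k=4$ (no merging), I would assign each $b_j$ a binary type $(x_j,y_j,z_j,w_j)\in\{0,1\}^4$ encoding which of the two possible $\mathsf L_A$-edges is used at each of its four cells; two $b_j$'s then share $4-d$ middle neighbors where $d$ is the Hamming distance of their types. Plotkin's bound caps the number of codewords with pairwise distance $\ge 3$ at two in $\{0,1\}^4$, so $n\ge 3$ already yields a $4$-cycle via the $b$'s; separately, the no-merging requirement that the four $B$-edge bundles be pairwise disjoint forces $n\ge 5$, while the residual $n=2$ case collapses (the only $B$-edge is $\{b_1,b_2\}$, so all colors merge) back to $k=1$, already handled.

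The main obstacle will be the Case (II) bookkeeping---specifically the fold argument that excludes adjacent merging, and the verification that $b_j$ has exactly four distinct middle neighbors in each merged sub-case---both of which reduce to a finite inspection of how two cells incident to a common vertex can share a horizontal edge.
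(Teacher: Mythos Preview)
Your approach is genuinely different from the paper's: the paper extends the classification of valid three-cell subpartitions (Lemma~\ref{lemma:card3subpartitions}) and exhaustively checks fourth and fifth cells case by case, whereas you argue structurally about the shape of $\mathsf L_A$ and then try to force a short cycle in $\mathsf L_1$ by pigeonhole on middle vertices. Your reduction to Cases~(I) and~(II) via the degree identity $\sum_{i'\ne i}e_{ii'}=n$ is correct and elegant, and Case~(I) is fine.

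The gap is in Case~(II). Your key step---``two adjacent edges of $\mathsf L_A$ sharing $a_j$ cannot lie in the same color class, because a shared $B$-edge produces a fold at some $b$''---is false as stated. Take $C_1=\{(a_1,b_1),(a_2,b_2)\}$ and $C_2=\{(a_2,b_1),(a_3,b_2)\}$: the adjacent $A$-edges $\{a_1,a_2\}$ and $\{a_2,a_3\}$ share the $B$-edge $\{b_1,b_2\}$ and hence the same color, yet with orientations $(a_1,a_2),(a_2,a_3),(b_1,b_2)$ there is no fold anywhere (at $a_2$ one edge is incoming and one outgoing; at $b_1,b_2$ there is only the single horizontal edge $(b_1,b_2)$). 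This is exactly the paper's $P_{11}$, which is valid. So adjacent merging is not excluded by $\mathsf T_2$; it is only constrained to path-orientation through the shared vertex.

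This breaks the downstream argument. In the configuration above, $b_1$ participates in two cells yielding the \emph{same} $B$-edge $(b_1,b_2)$, hence the same middle neighbor $(c,\text{out})$; so $|N(b_1)\cap M|\le 3$, and your inclusion--exclusion bound $|N(b_j)\cap N(b_{j'})|\ge 4+4-2k$ for $k\le 3$ no longer gives $\ge 2$. The ``parallel fold-style'' patch you allude to does not apply: when two cells at a common $b_j$ share a horizontal $B$-edge, that edge is recorded once in $\mathsf L_B$, so there is no pair of edges to form a fold. At this point the ``finite inspection'' you anticipate is essentially the paper's enumeration; the structural shortcut has not actually bypassed it. Your $k=4$ Hamming-distance argument is sound, but establishing that you are in the $k=4$ regime (or otherwise controlling the degenerations above) is precisely the missing work.
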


\begin{proof}
Let $n$ be a positive integer. We consider the case when $|A| = 4$ and $|B| = n$. The case when $|A| = n$ and $|B| = 4$ follows by symmetry. Since $|A| = 4$, the girth $\mathsf{L}_{A}$ is either less than or equal to $4$ or it is $\infty$. When $\girth(\mathsf{L}_{A})\le 4$, in the \text{triple girth condition}, $\mathsf{T_4}$, the only possible pairs are $(4,4)$ and $(3,6)$, where:
\begin{itemize}
    \item the pair $(4,4)$ means $\text{girth}(L_{AB}) \geq 4$ and $\textit{half-girth}(L_1) \geq 4$, and
    \item the pair $(3,6)$ means $\text{girth}(L_{AB}) \geq 3$ and $\textit{half-girth}(L_1) \geq 6$.
\end{itemize}
By Theorem~\ref{thm:main}, the only valid pair for $\mathsf{T_4}$ is $(4,4)$. However, as discussed in Section 4.2, every child of $P_{11}$ and $P_{13}$ yields $\textit{half-girth}(L_1) \leq 3$. The only subpartition yielding $\textit{half-girth}(L_1) = 4$ is $P_{127}$.

The subpartition $P_{127} = \{C_1, C_2, C_3\}$, where:
\begin{align*}
    C_1 &= \{(a_1,b_1), (a_2,b_2)\}, \\
    C_2 &= \{(a_1,b_2), (a_3,b_3)\}, \\
    C_3 &= \{(a_2,b_1), (a_4,b_4)\}.
\end{align*}
Let $C_4$ be a cell such that one of its edges is $(a_1, b_3)$. There are 9 possible choices for the second edge of $C_4$, namely:
\[
(a_2, b_4), (a_3, b_1), (a_3, b_2), (a_3, b_4), (a_4, b_1), (a_4, b_2), (a_2, b_5), (a_3, b_5), (a_4, b_5).
\]
For the choices $(a_3, b_1), (a_4, b_1), (a_4, b_2), (a_4, b_5)$, the triple girth condition is not satisfied since $\textit{girth}(L_{AB}) = 3$. For the pair $(a_3, b_2)$, the subpartition fails orientability. The remaining valid subpartitions are as follows:
\begin{itemize}
    \item $P_{1273} = P_{127} \cup C_4$, where $C_4 = \{(a_1, b_3), (a_2, b_4)\}$. The subpartition $P_{1273}$ further has 9 possibilities to pair with the edge $(a_2, b_3)$ to form a new cell $ C_5$, which are given as:
    \[
    (a_3, b_1), (a_3, b_2), (a_1, b_4), (a_3, b_4), (a_4, b_1), (a_4, b_2), (a_1, b_5), (a_3, b_5), (a_4, b_5).
    \]
    However, for $C_5 = \{(a_2, b_3),(a_1, b_4)\}$, orientability fails, and for the rest of the possibilities, the triple girth condition fails.
    \item $P_{1274} = P_{127} \cup C_4$ where $C_4 =\{(a_1, b_3), (a_3, b_4)\}$. The subpartition $P_{1274}$ further has $8$ possibilities to pair with the edge $(a_2, b_3)$ to form a new cell $ C_5$, which are given as:
    $$(a_3, b_1), (a_3, b_2), (a_1, b_4), (a_4, b_1), (a_4, b_2), (a_1, b_5), (a_3, b_5), (a_4, b_5).$$ However, for $C_5 = \{(a_2, b_3),(a_1, b_4)\}$, orientability fails and for rest of the possibilities triple girth condition fails.
    
    \item $P_{1277} = P_{127} \cup C_4$ where $C_4 =\{(a_1, b_3), (a_2, b_5)\}$. 
    The subpartition $P_{1277}$ further has $12$ possibilities to pair with the edge $(a_2, b_3)$ to form a new cell $ C_5$, which are given as:
    \[
    (a_3, b_1), (a_3, b_2), (a_1, b_4), (a_3, b_4), (a_4, b_1), (a_4, b_2), (a_1, b_5), (a_3, b_5), (a_4, b_5), (a_1, b_6), (a_3, b_6), (a_4, b_6).\] 
    For the possibilities $(a_1, b_5)$ and $(a_4, b_5)$ orientability fails and for rest of the possibilities triple girth condition fails.
     \item $P_{1278} = P_{127} \cup C_4$ where $C_4 =\{(a_1, b_3), (a_3, b_5)\}$. The subpartition $P_{1277}$ further has $11$ possibilities to pair with the edge $(a_2, b_3)$ to form a new cell $ C_5$, which are given as: 
     \[(a_3, b_1),  (a_3, b_2) ,  (a_1, b_4) ,  (a_3, b_4) ,  (a_4, b_1) ,  (a_4, b_2) ,  (a_1, b_5) ,  (a_4, b_5) ,  (a_1, b_6) ,  (a_3, b_6) ,  (a_4, b_6).\] 
     However, for $C_5 = \{(a_2, b_3),(a_1, b_5)\}$, orientability fails and for rest of the possibilities triple girth condition fails.
    
\end{itemize}

The case when $\girth(\mathsf{L}_{A}) = \infty$ is handled identically by computing all possible children for $P_{115},\; P_{123},\; P_{125},\; P_{127},\; \text{and } P_{135}$. This also yields no valid product substructures. Thus, no valid product structure exists when $|A| = 4$ and $|B| = n$. The proof for $|A| = n$ and $|B| = 4$ follows similarly by symmetry.
\end{proof}

\begin{theorem}\label{thm5nnotpossible}
There are no counterexamples of type $(5,n)$ or $(n,5)$ where $n$ is a positive integer.
\end{theorem}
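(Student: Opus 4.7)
The plan is to mimic the argument of Theorem~\ref{thm:4nnotpossible} one step further in the search DAG. By symmetry, it suffices to treat $|A|=5$ and $|B|=n$. Theorem~\ref{thm:main} rules out the $(3,6)$ option in $\mathsf{T}_4$, so any valid subpartition satisfies either the pair $(4,4)$ or the pair $(6,3)$ of the triple-girth condition, and I would treat each regime separately.

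For the $(4,4)$ regime the argument is essentially the one used for Theorem~\ref{thm:4nnotpossible}. By the enumeration in Section~4.2, the only cardinality-$3$ seed with $\hgirth(\mathsf{L}_1)\geq 4$ is $P_{127}$, so any valid subpartition must contain $P_{127}$ up to isomorphism. Its fourth cell $C_4$ is forced (by left alignment and lexicographic order) to contain the next unused edge $(a_1,b_3)$, and its fifth cell $C_5$ is forced to contain $(a_2,b_3)$. Relative to the $|A|=4$ analysis, the new freedom is that the second edge of $C_4$ or $C_5$ may now involve the fresh vertex $a_5$. For each such additional subcase I would verify that either orientability fails, $\girth(\mathsf{L}_{AB})$ drops to $3$, or a repeated $(\text{color},\text{direction})$ pattern is produced at two distinct vertices of $\mathsf{L}_{AB}$, giving a $4$-cycle in $\mathsf{L}_1$ that violates $\hgirth(\mathsf{L}_1)\geq 4$. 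Combined with the branches already analyzed for $|A|=4$, this handles every possible extension of $P_{127}$ under the $(4,4)$ regime.

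For the $(6,3)$ regime the main structural input is the bound $|\bar{E}_A|\le 4$, which follows because $\girth(\mathsf{L}_A)\geq 6$ on only $5$ vertices forces $\mathsf{L}_A$ to be a forest. Counting cells by color class (as in Example~\ref{ex:taiko}: when $P$ is a partition of $A\times B$, the number of cells of color $c$ equals $\alpha_c\beta_c$, with $\alpha_c$ and $\beta_c$ the numbers of $A$- and $B$-edges in the $c$-th class), one obtains $\sum_c \alpha_c \beta_c = \lfloor 5n/2\rfloor$ subject to $\sum_c \alpha_c \leq 4$, which constrains the color distribution tightly. I would then enumerate the forest shapes of $\mathsf{L}_A$ on $5$ vertices compatible with $\sum_c\alpha_c\le 4$, embed each of the cardinality-$3$ seeds of Lemma~\ref{lemma:card3subpartitions} compatible with $(6,3)$ at cardinality~$3$ (i.e.\ all but $P_{115}$, which already contains a $B$-triangle), and continue the extension one cell at a time. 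For each resulting branch, either a short cycle appears in $\mathsf{L}_{AB}$ (violating $\girth(\mathsf{L}_{AB})\geq 6$) or a $4$-cycle appears in $\mathsf{L}_1$ (violating $\hgirth(\mathsf{L}_1)\geq 3$), so the branch dies before reaching a full partition.

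The main obstacle is the size of the case split in the $(4,4)$ regime: admitting the extra vertex $a_5$ roughly doubles the branching at each depth of the search DAG rooted at $P_{127}$, yielding many more leaves than in the $|A|=4$ analysis. To keep the argument humanly verifiable I would invoke Lemma~\ref{lemma:hereditary} aggressively, pruning each branch the moment a decreasing condition is violated, and group several extensions under a single no-pattern or single $4$-cycle-in-$\mathsf{L}_1$ argument. The exponential growth of this enumeration in $m$ is precisely what motivates, and is superseded by, the computer-assisted Theorem~\ref{thm:computer_assisted}.
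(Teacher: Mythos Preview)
Your $(4,4)$-regime analysis is essentially the paper's own argument: both start from $P_{127}$, force $(a_1,b_3)\in C_4$ and $(a_2,b_3)\in C_5$ by left alignment, and then case-check the extra branches created by the new vertex $a_5$. The paper simply lists the three additional $C_4$ candidates $P_{127\{11\}},P_{127\{12\}},P_{127\{13\}}$ with second edge $(a_5,b_2),(a_5,b_4),(a_5,b_5)$ and asserts that every grandchild fails triple girth, which is exactly what your sketch proposes to do.

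Your $(6,3)$-regime argument, however, has two concrete problems. First, the counting identity ``number of cells of color $c$ equals $\alpha_c\beta_c$'' is not established anywhere in the paper and is not obviously true: the color classes are the \emph{transitive closure} of the cell relation on $\bar E_{AB}$, so an $A$-edge and a $B$-edge can share a color without any single cell witnessing that pair. Example~\ref{ex:taiko} happens to satisfy $N_c=\alpha_c\beta_c$ for each class, but that is an artifact of that particular partition, not a general fact you may invoke; without it the constraint $\sum_c\alpha_c\beta_c=\lfloor 5n/2\rfloor$ is unavailable and the ``tight color distribution'' step collapses. Second, your exclusion of $P_{115}$ on the grounds that it ``already contains a $B$-triangle'' is wrong: $P_{115}$ has $B$-edges $(b_1,b_2),(b_2,b_3),(b_1,b_4)$, which form a tree, so $\girth(\mathsf L_B)=\infty$ there (you may be thinking of $P_{113}$, which does contain the $b_1b_2b_3$ triangle and was already discarded in Lemma~\ref{lemma:card3subpartitions}). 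The paper does not attempt any structural shortcut for this regime; it handles the $\girth(\mathsf L_A)=\infty$ case ``identically by computing all possible children for $P_{115},P_{123},P_{125},P_{127},P_{135}$,'' i.e.\ by the same brute-force extension. Your fallback to enumeration would work, but the counting argument as stated does not.
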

\begin{proof}
    Consider the case when $\girth(\mathsf{L}_A) \le 5 $. $P_{127} = \{C_1, C_2, C_3\}$ where $C_1 = \{(a_1,b_1),(a_2,b_2)\}$, $C_2 = \{(a_1,b_2),(a_3,b_3)\}$ and $C_3 = \{(a_2,b_1),(a_4,b_4)\}$. In addition to children discussed in Theorem \ref{thm:4nnotpossible}, there are four more possibilities $(a_5, b_1), (a_5, b_2), (a_5, b_4), (a_5,b_5)$. 

    \begin{itemize}
    \item $P_{1273} = P_{127} \cup C_4$, where $C_4 = \{(a_1, b_3), (a_2, b_4)\}$. The subpartition $P_{1273}$ further has possible $13$ subpartitions up to permutations as children, however, for $12$ children triple girth condition fails and the orientation condition fails for one child.
    
    \item $P_{1274} = P_{127} \cup C_4$ where $C_4 =\{(a_1, b_3), (a_3, b_4)\}$. The subpartition $P_{1273}$ further has possible $12$ subpartitions up to permutations as children, however, for $11$ children triple girth condition fails and the orientation condition fails for one child.
    
    \item $P_{1277} = P_{127} \cup C_4$ where $C_4 =\{(a_1, b_3), (a_2, b_5)\}$. 
    The subpartition $P_{1277}$ further has $17$ possibilities to pair with the edge $(a_2, b_3)$ to form a new cell $ C_5$. For $15$ possibilities the triple girth condition is not satisfied and for two subpartitions the orientation fails.
    
    \item $P_{1278} = P_{127} \cup C_4$ where $C_4 =\{(a_1, b_3), (a_3, b_5)\}$. The subpartition $P_{1277}$ further has $16$ possibilities to pair with the edge $(a_2, b_3)$ to form a new cell $ C_5$. None of the possibilities yield a valid subpartition. For $15$ children triple girth condition fails and the orientation condition fails for one child.

    \item $P_{127 \{11\}} = P_{127} \cup C_4$ where $C_4 = \{(a_1, b_3),(a_5, b_2)\}$. This subpartition has $12$ further children; however, all children fail the triple girth conditions.

    \item $P_{127 \{12\}} = P_{127} \cup C_4$ where $C_4 = \{(a_1, b_3),(a_5, b_4)\}$. This subpartition has $12$ further children; however, all children fail the triple girth conditions.

    \item $P_{127 \{13\}} = P_{127} \cup C_4$ where $C_4 = \{(a_1, b_3),(a_5, b_5)\}$. This subpartition has $16$ further children; however, all children fail the triple girth conditions.
    
\end{itemize}
The case when $\girth(\mathsf{L}_{A}) = \infty$ is handled identically by computing all possible children for $P_{115},\; P_{123},\; P_{125},\; P_{127},\; \text{and } P_{135}$. This also yields no valid product substructures. Thus, no valid product structure exists when $|A| = 5$ and $|B| = n$. The proof for $|A| = n$ and $|B| = 5$ follows similarly by symmetry.
\end{proof}
    





\section{Computational Results}

The Algorithm \ref{alg:dfs-search-dag} generates all product substructures of size $(m,n)$ up to isomorphism such that the product substructures satisfy conditions $\mathsf{T}_1$ through $\mathsf{T}_4$. The algorithm is similar in spirit to the canonical labeling algorithm  in \cite{Schweitzer2013}. However, instead of choosing canonical representations in each iteration, we choose canonical representations of the cells we construct by using the left-alignment condition. A reference implementation of Algorithm~\ref{alg:dfs-search-dag} is publicly available at \cite{garg-github} which follows the pseudocode verbatim; left–alignment and the lexicographic tie–breaking to select edges for a 2-cell are enforced exactly as in §3.2–§3.4. This algorithm is also open-source and modular: new constraints can be added and existing ones toggled or removed, making it easy to tailor the search to stronger or weaker condition sets and to target specific families of potential counterexamples. 

The search explores a tree whose root is the empty subpartition; the level $k$ nodes are exactly the valid subpartitions (and hence product substructures) of cardinality $k$, up to isomorphism. Each node at height $k$ represents a subpartition with $k$ cells, and we record the height of every node. For example, at height $1$ there is a unique child (Lemma~\ref{lemma:card1subpartitions}); at height $2$ there are exactly three children (Lemma~\ref{lemma:card2subpartitions}). The first three levels are depicted in Figure~\ref{fig:tree}.



\begin{figure}
    \centering
    \tikzstyle arrowstyle=[scale=1]
\tikzstyle arrowtipinmiddle=[postaction={decorate,decoration={markings,mark=at position .56 with {\arrow[arrowstyle]{stealth'}}}}]
\tikzstyle verticalarrowtipinmiddle=[postaction={decorate,decoration={markings,mark=at position .7 with {\arrow[arrowstyle]{stealth'}}}}]
\tikzstyle 2arrowtipinmiddle=[postaction={decorate,decoration={markings,mark=at position .53 with {\arrow[arrowstyle]{{stealth'}}},mark=at position .58 with {\arrow[arrowstyle]{{stealth'}}}}}]
\tikzstyle vertical2arrowtipinmiddle=[postaction={decorate,decoration={markings,mark=at position .59 with {\arrow[arrowstyle]{{stealth'}}},mark=at position .62 with {\arrow[arrowstyle]{{stealth'}}}}}]
\tikzstyle 3arrowtipinmiddle=[postaction={decorate,decoration={markings,mark=at position .47 with {\arrow[arrowstyle]{stealth'}},mark=at position .56 with {\arrow[arrowstyle]{stealth'}},mark=at position .65 with {\arrow[arrowstyle]{stealth'}}}}]
\tikzstyle vertical3arrowtipinmiddle=[postaction={decorate,decoration={markings,mark=at position .33 with {\arrow[arrowstyle]{stealth'}},mark=at position .36 with {\arrow[arrowstyle]{stealth'}},mark=at position .39 with {\arrow[arrowstyle]{stealth'}}}}]
\tikzstyle trianglearrowtipinmiddle=[postaction={decorate,decoration={markings,mark=at position .56 with {\arrow[arrowstyle]{Triangle[open]}}}}]
\tikzstyle verticatrianglelarrowtipinmiddle=[postaction={decorate,decoration={markings,mark=at position .7 with {\arrow[arrowstyle]{Triangle[open]}}}}]
 \tikzstyle triangle2arrowtipinmiddle=[postaction={decorate,decoration={markings,mark=at position .53 with {\arrow[arrowstyle]{{Triangle[open]}}},mark=at position .58 with {\arrow[arrowstyle]{{Triangle[open]}}}}}]
\tikzstyle verticaltriangle2arrowtipinmiddle=[postaction={decorate,decoration={markings,mark=at position .59 with {\arrow[arrowstyle]{{Triangle[open]}}},mark=at position .62 with {\arrow[arrowstyle]{{Triangle[open]}}}}}]
\tikzstyle triangle3arrowtipinmiddle=[postaction={decorate,decoration={markings,mark=at position .47 with {\arrow[arrowstyle]{Triangle[open]}},mark=at position .56 with {\arrow[arrowstyle]{Triangle[open]}},mark=at position .65 with {\arrow[arrowstyle]{Triangle[open]}}}}]
\tikzstyle verticaltriangle3arrowtipinmiddle=[postaction={decorate,decoration={markings,mark=at position .33 with {\arrow[arrowstyle]{Triangle[open]}},mark=at position .36 with {\arrow[arrowstyle]{Triangle[open]}},mark=at position .39 with {\arrow[arrowstyle]{Triangle[open]}}}}]
    
\begin{tikzpicture}

    \node[fill=royalblue, circle, inner sep=1.5pt, label=above:\small{$\emptyset$}] (root) at (0,6) {};
    
    \node[fill=royalblue, circle, inner sep=1.5pt, label=left:\small{$P_1$}] (p1) at (0,4) {};
    \node[fill=royalblue, circle, inner sep=1.5pt, label=left:\small{$P_{11}$}] (p11) at (-3,2) {};
    \node[fill=royalblue, circle, inner sep=1.5pt, label=left:\small{$P_{12}$}] (p12) at (0,2) {};
    \node[fill=royalblue, circle, inner sep=1.5pt, label=left:\small{$P_{13}$}] (p13) at (3,2) {};

    \node[fill=royalblue, circle, inner sep=1.5pt, label=below:\small{$P_{115}$}] (p115) at (-3,0) {};

    \node[fill=royalblue, circle, inner sep=1.5pt, label=below:\small{$P_{125}$}] (p125) at (0,0) {};
    \node[fill=royalblue, circle, inner sep=1.5pt, label=below:\small{$P_{135}$}] (p135) at (3,0) {};
    \node[fill=royalblue, circle, inner sep=1.5pt, label=below:\small{$P_{133}$}] (p133) at (1.5,0) {};
    \node[fill=royalblue, circle, inner sep=1.5pt, label=below:\small{$P_{137}$}] (p137) at (4.5,0) {};

    \draw[line width=0.025cm, color=magenta] (root) -- (p1);
    \draw[line width=0.025cm, color=magenta] (p1) -- (p11);
    
    \draw[line width=0.025cm, color=magenta] (p1) -- (p12);
    \draw[line width=0.025cm, color=magenta] (p1) -- (p13);

    \draw[line width=0.025cm, color=magenta] (p11) -- (p115);
    \draw[line width=0.025cm, color=magenta] (p12) -- (p125);
    
    \draw[line width=0.025cm, color=magenta] (p13) -- (p135);
    
    \draw[line width=0.025cm, color=magenta] (p13) -- (p133);
    
    \draw[line width=0.025cm, color=magenta] (p13) -- (p137);

    \node[] (ht0) at (6.5,6) {\textcolor{royalblue}{\footnotesize{height 0}}};
    \node[] (ht0) at (6.5,4) {\textcolor{royalblue}{\footnotesize{height 1}}};
    \node[] (ht0) at (6.5,2) {\textcolor{royalblue}{\footnotesize{height 2}}};
    \node[] (ht0) at (6.5,0) {\textcolor{royalblue}{\footnotesize{height 3}}};

    \end{tikzpicture}

    \caption{Tree for the first three iterations of the algorithm as seen from Lemma \ref{lemma:card1subpartitions}, Lemma  \ref{lemma:card2subpartitions}, and Lemma \ref{lemma:card3subpartitions}}
    \label{fig:tree}
\end{figure}
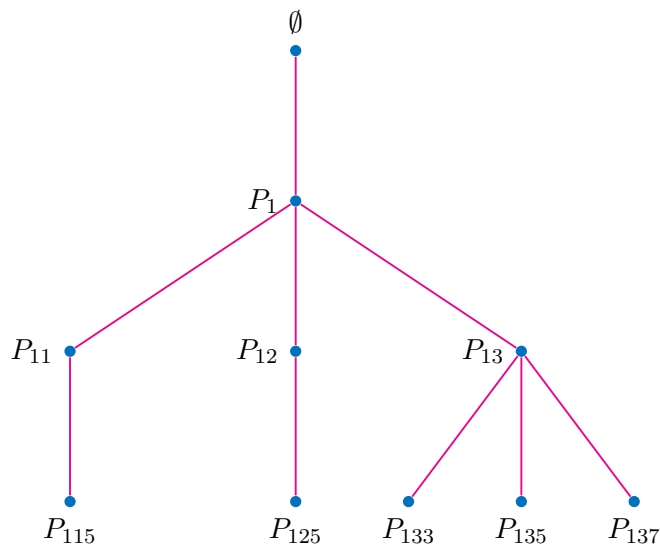


\begin{remark}[No example criterion]

Suppose the exploration tree attains maximum height $h$ (i.e., no node at height $h$
admits a valid child). Each new $2$–cell can introduce at most two previously unused
$A$–indices and at most two previously unused $B$–indices, so any branch of height $k$
uses at most $2k$ distinct indices on each side. Consequently:
\begin{enumerate}[label=(\roman*)]
\item if $2h<\min\{m,n\}$, then there is no product structure of any size $(m',n')$
with $m'\ge m$ and $n'\ge n$ satisfying $\mathsf T_1$–$\mathsf T_4$;
\item if $2h<n$, then there is no product structure of size $(m,n')$ with $n'\ge n$
satisfying $\mathsf T_1$–$\mathsf T_4$.
\end{enumerate}

\end{remark}


For $(m,n)$ in the range $1\le m,n\le 18$, no example criterion is not triggered, suggesting that the counterexamples might occur for higher sizes. These criteria can also determine when such a method of finding counterexamples might not be fruitful.  

\subsubsection*{Proof of Theorem \ref{thm:computer_assisted}} The execution of the algorithm has shown that for 
\begin{itemize}
    \item $1\le m\le 13$ and $1 \le n \le 13$,  or,
    \item  $m\in \{6,7\}$ and $1\le n\le 200$
\end{itemize}
there are no orientable product structures satisfying conditions $\mathsf{T}_1 - \mathsf{T}_4$.\qed

There exist many partial examples satisfying $\mathsf{T}_1$–$\mathsf{T}_3$.
For instance, Example~\ref{ex:taiko} has girth pair $(3,3)$.
Table~\ref{table:3-3} summarizes $(m,n)$ with $\mathsf{T}_1$–$\mathsf{T}_3$ and girth pair $(3,3)$; Table~\ref{table:4-4} summarizes the $(4,4)$ case.

\begin{table}
\centering
\begin{tabular}{l|l|l|l|l|l|l|l|l}
\multicolumn{1}{l}{} & \multicolumn{1}{l}{\small{4}} & \multicolumn{1}{l}{\small{5}} & \multicolumn{1}{l}{\small{6}} & \multicolumn{1}{l}{\small{7}} & \multicolumn{1}{l}{\small{8}} & \multicolumn{1}{l}{\small{9}} & \multicolumn{1}{l}{\small{10}} & $\tiny{\cdots}$  \\ 
\cline{2-9}
\small{4}                    &   \cellcolor{green!80} 
    \begin{tabular}{@{}c@{}}
    \tiny $~$ \\
    \end{tabular} & \cellcolor{green!80} 
    \begin{tabular}{@{}c@{}}
    \tiny $~$ \\
    \end{tabular}                       &            \cellcolor{green!80} 
    \begin{tabular}{@{}c@{}}
    \tiny $~$ \\
    \end{tabular}           & \cellcolor{green!80} 
    \begin{tabular}{@{}c@{}}
    \tiny $~$ \\
    \end{tabular}                      &            \cellcolor{green!80} 
    \begin{tabular}{@{}c@{}}
    \tiny $~$ \\
    \end{tabular}           &     \cellcolor{green!80} 
    \begin{tabular}{@{}c@{}}
    \tiny $~$ \\
    \end{tabular}                  &                        &      \\ 
\cline{2-9}
\small{5}                    &   \cellcolor{green!80} 
    \begin{tabular}{@{}c@{}}
    \tiny $~$ \\
    \end{tabular}                    & \cellcolor{green!80} 
    \begin{tabular}{@{}c@{}}
    \tiny $~$ \\
    \end{tabular}                      &        \cellcolor{green!80} 
    \begin{tabular}{@{}c@{}}
    \tiny $~$ \\
    \end{tabular}               &  \cellcolor{green!80} 
    \begin{tabular}{@{}c@{}}
    \tiny $~$ \\
    \end{tabular}                     &                       &                       &                        &      \\ 
\cline{2-9}
\small{6}                    &   \cellcolor{green!80} 
    \begin{tabular}{@{}c@{}}
    \tiny $~$ \\
    \end{tabular}                    & \cellcolor{green!80} 
    \begin{tabular}{@{}c@{}}
    \tiny $~$ \\
    \end{tabular}                       &      \cellcolor{green!80} 
    \begin{tabular}{@{}c@{}}
    \tiny $~$ \\
    \end{tabular}                 &                       &                       &                       &                        &      \\ 
\cline{2-9}
\small{7 }                   &  \cellcolor{green!80} 
    \begin{tabular}{@{}c@{}}
    \tiny $~$ \\
    \end{tabular}                     & \cellcolor{green!80} 
    \begin{tabular}{@{}c@{}}
    \tiny $~$ \\
    \end{tabular}                       &                       &                       &                       &                       &                        &      \\ 
\cline{2-9}
\small{8}                    &  \cellcolor{green!80} 
    \begin{tabular}{@{}c@{}}
    \tiny $~$ \\
    \end{tabular}                     &                       &                       &                       &                       &                       &                        &      \\ 
\cline{2-9}
\small{9}                    &   \cellcolor{green!80} 
    \begin{tabular}{@{}c@{}}
    \tiny $~$ \\
    \end{tabular}                    &                       &                       &                       &                       &                       &                        &      \\ 
\cline{2-9}
\small{10}                   &                       &                       &                       &                       &                       &                       &                        &      \\ 
\cline{2-9}
$\vdots$                  &                       &                       &                       &                       &                       &                       &                        &     
\end{tabular}
\caption{Green marks pairs $(m, n)$ where product structures of size $(m, n)$ satisfy $\mathsf{T}_1 - \mathsf{T}_3$ with girth pair $(3, 3)$.}
    \label{table:3-3}
\end{table}

\begin{table}[]
    \centering
    
    
\begin{tabular}{ c|c|c|c|c|c|c|c|c|c|c|c|c|c|c|c|c|c|c|c|c|c|c|c|c }
 & 4 & 5 & 6 & 7 & 8 & 9 & 10 & 11 & 12 & 13 & 14 & 15 & 16 & 17 & 18 & 19 & 20 & 21 & 22 & 23 & 24 & 25 & 26 & 27 \\

\\
 \hline6 &
 \cellcolor{red} \begin{tabular}{@{}c@{}}
 \tiny $~$ \\
\end{tabular}
 &
 \cellcolor{red} \begin{tabular}{@{}c@{}}
 \tiny $~$ \\
\end{tabular}
 &
 \cellcolor{red} \begin{tabular}{@{}c@{}}
 \tiny $~$ \\
\end{tabular}
 &
 \cellcolor{red} \begin{tabular}{@{}c@{}}
 \tiny $~$ \\
\end{tabular}
 &
 \cellcolor{red} \begin{tabular}{@{}c@{}}
 \tiny $~$ \\
\end{tabular}
 &
 \cellcolor{red} \begin{tabular}{@{}c@{}}
 \tiny $~$ \\
\end{tabular}
 &
 \cellcolor{red} \begin{tabular}{@{}c@{}}
 \tiny $~$ \\
\end{tabular}
 &
 \cellcolor{red} \begin{tabular}{@{}c@{}}
 \tiny $~$ \\
\end{tabular}
 &
 \cellcolor{red} \begin{tabular}{@{}c@{}}
 \tiny $~$ \\
\end{tabular}
 &
 \cellcolor{red} \begin{tabular}{@{}c@{}}
 \tiny $~$ \\
\end{tabular}
 &
 \cellcolor{red} \begin{tabular}{@{}c@{}}
 \tiny $~$ \\
\end{tabular}
 &
 \cellcolor{red} \begin{tabular}{@{}c@{}}
 \tiny $~$ \\
\end{tabular}
 &
 \cellcolor{red} \begin{tabular}{@{}c@{}}
 \tiny $~$ \\
\end{tabular}
 &
  &
  &
  &
  &
  &
  &
  &
  &
  &
  &
 
\\
 \hline7 &
 \cellcolor{red} \begin{tabular}{@{}c@{}}
 \tiny $~$ \\
\end{tabular}
 &
 \cellcolor{red} \begin{tabular}{@{}c@{}}
 \tiny $~$ \\
\end{tabular}
 &
 \cellcolor{red} \begin{tabular}{@{}c@{}}
 \tiny $~$ \\
\end{tabular}
 &
 \cellcolor{red} \begin{tabular}{@{}c@{}}
 \tiny $~$ \\
\end{tabular}
 &
 \cellcolor{red} \begin{tabular}{@{}c@{}}
 \tiny $~$ \\
\end{tabular}
 &
 \cellcolor{red} \begin{tabular}{@{}c@{}}
 \tiny $~$ \\
\end{tabular}
 &
 \cellcolor{red} \begin{tabular}{@{}c@{}}
 \tiny $~$ \\
\end{tabular}
 &
 \cellcolor{red} \begin{tabular}{@{}c@{}}
 \tiny $~$ \\
\end{tabular}
 &
 \cellcolor{red} \begin{tabular}{@{}c@{}}
 \tiny $~$ \\
\end{tabular}
 &
 \cellcolor{red} \begin{tabular}{@{}c@{}}
 \tiny $~$ \\
\end{tabular}
 &
 \cellcolor{red} \begin{tabular}{@{}c@{}}
 \tiny $~$ \\
\end{tabular}
 &
  &
  &
  &
  &
  &
  &
  &
  &
  &
  &
  &
  &
 
\\
 \hline8 &
 \cellcolor{red} \begin{tabular}{@{}c@{}}
 \tiny $~$ \\
\end{tabular}
 &
 \cellcolor{red} \begin{tabular}{@{}c@{}}
 \tiny $~$ \\
\end{tabular}
 &
 \cellcolor{red} \begin{tabular}{@{}c@{}}
 \tiny $~$ \\
\end{tabular}
 &
 \cellcolor{red} \begin{tabular}{@{}c@{}}
 \tiny $~$ \\
\end{tabular}
 &
 \cellcolor{red} \begin{tabular}{@{}c@{}}
 \tiny $~$ \\
\end{tabular}
 &
 \cellcolor{red} \begin{tabular}{@{}c@{}}
 \tiny $~$ \\
\end{tabular}
 &
 \cellcolor{red} \begin{tabular}{@{}c@{}}
 \tiny $~$ \\
\end{tabular}
 &
 \cellcolor{red} \begin{tabular}{@{}c@{}}
 \tiny $~$ \\
\end{tabular}
 &
 \cellcolor{red} \begin{tabular}{@{}c@{}}
 \tiny $~$ \\
\end{tabular}
 &
  &
  &
  &
  &
  &
  &
  &
  &
  &
  &
  &
  &
  &
  &
 
\\
 \hline9 &
 \cellcolor{red} \begin{tabular}{@{}c@{}}
 \tiny $~$ \\
\end{tabular}
 &
 \cellcolor{red} \begin{tabular}{@{}c@{}}
 \tiny $~$ \\
\end{tabular}
 &
 \cellcolor{red} \begin{tabular}{@{}c@{}}
 \tiny $~$ \\
\end{tabular}
 &
 \cellcolor{red} \begin{tabular}{@{}c@{}}
 \tiny $~$ \\
\end{tabular}
 &
 \cellcolor{red} \begin{tabular}{@{}c@{}}
 \tiny $~$ \\
\end{tabular}
 &
 \cellcolor{red} \begin{tabular}{@{}c@{}}
 \tiny $~$ \\
\end{tabular}
  &
 \cellcolor{red} \begin{tabular}{@{}c@{}}
 \tiny $~$ \\
\end{tabular}
 &
  &
  &
  &
  &
  &
  &
  &
  &
  &
  &
  &
  &
  &
  &
  &
  &
 
\\
 \hline10 &
 \cellcolor{red} \begin{tabular}{@{}c@{}}
 \tiny $~$ \\
\end{tabular}
 &
 \cellcolor{red} \begin{tabular}{@{}c@{}}
 \tiny $~$ \\
\end{tabular}
 &
 \cellcolor{red} \begin{tabular}{@{}c@{}}
 \tiny $~$ \\
\end{tabular}
 &
 \cellcolor{red} \begin{tabular}{@{}c@{}}
 \tiny $~$ \\
\end{tabular}
 &
 \cellcolor{red} \begin{tabular}{@{}c@{}}
 \tiny $~$ \\
\end{tabular}
 &
 \cellcolor{red} \begin{tabular}{@{}c@{}}
 \tiny $~$ \\
\end{tabular}
 &
  &
  &
  &
  &
  &
  &
  &
  &
  &
  &
  &
  &
  &
  &
  &
  &
  &
 
\\
 \hline11 &
 \cellcolor{red} \begin{tabular}{@{}c@{}}
 \tiny $~$ \\
\end{tabular}
 &
 \cellcolor{red} \begin{tabular}{@{}c@{}}
 \tiny $~$ \\
\end{tabular}
 &
 \cellcolor{red} \begin{tabular}{@{}c@{}}
 \tiny $~$ \\
\end{tabular}
 &
 \cellcolor{red} \begin{tabular}{@{}c@{}}
 \tiny $~$ \\
\end{tabular}
 &
 \cellcolor{red} \begin{tabular}{@{}c@{}}
 \tiny $~$ \\
\end{tabular}
 &
  &
  &
  &
  &
  &
  &
  &
  &
  &
  &
  &
  &
  &
  &
  &
  &
  &
  &
 
\\
 \hline12 &
 \cellcolor{red} \begin{tabular}{@{}c@{}}
 \tiny $~$ \\
\end{tabular}
 &
 \cellcolor{red} \begin{tabular}{@{}c@{}}
 \tiny $~$ \\
\end{tabular}
 &
 \cellcolor{red} \begin{tabular}{@{}c@{}}
 \tiny $~$ \\
\end{tabular}
 &
 \cellcolor{red} \begin{tabular}{@{}c@{}}
 \tiny $~$ \\
\end{tabular}
 &
 \cellcolor{red} \begin{tabular}{@{}c@{}}
 \tiny $~$ \\
\end{tabular}
 &
  &
  &
  &
  &
  &
  &
  &
  &
  &
  &
  &
  &
  &
  &
  &
  &
  &
  &
 
\\
 \hline13 &
 \cellcolor{red} \begin{tabular}{@{}c@{}}
 \tiny $~$ \\
\end{tabular}
 &
 \cellcolor{red} \begin{tabular}{@{}c@{}}
 \tiny $~$ \\
\end{tabular}
 &
 \cellcolor{red} \begin{tabular}{@{}c@{}}
 \tiny $~$ \\
\end{tabular}
 &
 \cellcolor{red} \begin{tabular}{@{}c@{}}
 \tiny $~$ \\
\end{tabular}
 &
  &
  &
  &
  &
  &
  &
  &
  &
  &
  &
  &
  &
  &
  &
  &
  &
  &
  &
  &
 
\\
 \hline14 &
 \cellcolor{red} \begin{tabular}{@{}c@{}}
 \tiny $~$ \\
\end{tabular}
 &
 \cellcolor{red} \begin{tabular}{@{}c@{}}
 \tiny $~$ \\
\end{tabular}
 &
 \cellcolor{red} \begin{tabular}{@{}c@{}}
 \tiny $~$ \\
\end{tabular}
 &
 \cellcolor{red} \begin{tabular}{@{}c@{}}
 \tiny $~$ \\
\end{tabular}
 &
  &
  &
  &
  &
  &
  &
  &
  &
  &
  &
  &
  &
  &
  &
  &
  &
  &
  &
  &
 
\\
 \hline15 &
 \cellcolor{red} \begin{tabular}{@{}c@{}}
 \tiny $~$ \\
\end{tabular}
 &
 \cellcolor{red} \begin{tabular}{@{}c@{}}
 \tiny $~$ \\
\end{tabular}
 &
 \cellcolor{red} \begin{tabular}{@{}c@{}}
 \tiny $~$ \\
\end{tabular}
 &
  &
  &
  &
  &
  &
  &
  &
  &
  &
  &
  &
  &
  &
  &
  &
  &
  &
  &
  &
  &
 
\\
 \hline16 &
 \cellcolor{red} \begin{tabular}{@{}c@{}}
 \tiny $~$ \\
\end{tabular}
 &
 \cellcolor{red} \begin{tabular}{@{}c@{}}
 \tiny $~$ \\
\end{tabular}
 &
 \cellcolor{red} \begin{tabular}{@{}c@{}}
 \tiny $~$ \\
\end{tabular}
 &
  &
  &
  &
  &
  &
  &
  &
  &
  &
  &
  &
  &
  &
  &
  &
  &
  &
  &
  &
  &

    \end{tabular}
    

    \caption{Red marks pairs $(m, n)$ where no product structures of size $(m, n)$ satisfy $\mathsf{T}_1 - \mathsf{T}_3$ with the girth pair $(4,4)$ and White represents squares represent the pairs $(m,n)$ for which the program has not been executed.}
    \label{table:4-4}
\end{table}


\subsection{Efficiency and refinement.}
The search remains highly effective due to two reinforcing features.

\noindent\textbf{(E1)} {Symmetry reduction via left–alignment and local canonicalisation.}

At every step, left–alignment quotients the index symmetries $S_m\times S_n$ by fixing the positions of newly introduced indices, so each expansion considers a single canonical representative of an isomorphism class of extensions.


Moreover, a fixed lexicographic tie–break on $B$–edges removes the two–matching ambiguity when both $B$–indices are new; de–duplication prevents equivalent matchings within the same parent from producing multiple children.

\noindent\textbf{(E2)} Early pruning by decreasing constraints.
The conditions $\mathsf T_1$–$\mathsf T_4$ are decreasing (Section~3): a violation at a node immediately excludes its entire descendant subtree. This focuses the search on structurally viable subpartitions and avoids exploring dead branches. 

Empirically, the combination of (E1) and (E2) reduces the effective branching factor by orders of magnitude relative to na\"ive enumeration of raw $2$–cells, making the method practical at substantial sizes.

\subsection{Complexity (baseline vs.\ left-aligned search).}
A naive baseline, ignoring symmetry and validity, treats each step as choosing two $A$–indices and two $B$–indices and one of the two matchings:
\[
N_{\mathrm{baseline}} \;=\; 2\,\binom{m}{2}\,\binom{n}{2}\quad\text{candidates per expansion.}
\]
If every branch reaches depth $t=\lfloor mn/2\rfloor$, this scales like $N_{\mathrm{baseline}}^{\,t}$. In the left-aligned search, after $k$ cells let $a_k\le \min\{2k,m\}$ and $b_k\le \min\{2k,n\}$ be the numbers of distinct indices actually used so far. Left alignment maps any truly new indices to the next slots $\{i_P{+}1,i_P{+}2\}$ and $\{j_P{+}1,j_P{+}2\}$ (collapsing permutations among unused indices), the fixed tie–break removes the two–matching ambiguity when both $B$–indices are new, and validity pruning discards many candidates immediately. A conservative upper bound on the aligned candidates at step $k$ is
\[
\widetilde N_k \;\le\; \binom{a_k}{2}\,\binom{b_k}{2}
\;\le\; \binom{\min\{2k,m\}}{2}\,\binom{\min\{2k,n\}}{2}.
\]
Thus along a branch of length $t$,
\[
\prod_{k=0}^{t-1}\widetilde N_k
\;\le\; \prod_{k=0}^{t-1}\binom{\min\{2k,m\}}{2}\,\binom{\min\{2k,n\}}{2},
\]
which is dramatically smaller than $\bigl(2\binom{m}{2}\binom{n}{2}\bigr)^{t}$ once $k\ll m,n$ and especially after pruning. In the common regime $2k<m,n$ this simplifies to
\[
\widetilde N_k \;\le\; \binom{2k}{2}\,\binom{2k}{2} \;=\; \Theta(k^4),
\]
so the per–level branching factor grows only polynomially in $k$ (before pruning), versus the constant but much larger baseline factor $\Theta(m^2n^2)$ at every level. 
\printbibliography

@incollection {Kaplansky1956,
	AUTHOR = {Kaplansky, Irving},
	TITLE = {Problems in the theory of rings},
	BOOKTITLE = {Report of a conference on linear algebras, {J}une, 1956},
	PAGES = {1-3},
	NOTE = {Publ. 502},
	PUBLISHER = {Nat. Acad. Sci., Washington, DC},
	YEAR = {1957},
	MRCLASS = {16.00},
	MRNUMBER = {96696},
	MRREVIEWER = {I.\ N.\ Herstein},
}

@article {Kaplansky1970,
	AUTHOR = {Kaplansky, Irving},
	TITLE = {``{P}roblems in the theory of rings'' revisited},
	JOURNAL = {Amer. Math. Monthly},
	FJOURNAL = {American Mathematical Monthly},
	VOLUME = {77},
	YEAR = {1970},
	PAGES = {445--454},
	ISSN = {0002-9890,1930-0972},
	MRCLASS = {16.00},
	MRNUMBER = {258865},
	DOI = {10.2307/2317376},
	URL = {https://doi.org/10.2307/2317376},
}

@article {Higman1940,
	AUTHOR = {Higman, Graham},
	TITLE = {The units of group-rings},
	JOURNAL = {Proc. London Math. Soc. (2)},
	FJOURNAL = {Proceedings of the London Mathematical Society. Second Series},
	VOLUME = {46},
	YEAR = {1940},
	PAGES = {231--248},
	ISSN = {0024-6115},
	MRCLASS = {20.0X},
	MRNUMBER = {2137},
	MRREVIEWER = {M.\ Hall},
	DOI = {10.1112/plms/s2-46.1.231},
	URL = {https://doi.org/10.1112/plms/s2-46.1.231},
}

@phdthesis{Higmanthesis,
	author = {Higman, Graham},
	title = {Units in group rings},
	type = {D.Phil. thesis},
	school = {University of Oxford},
	year = {1940}
}

@article {Formanek1973,
	AUTHOR = {Formanek, Edward},
	TITLE = {The zero divisor question for supersolvable groups},
	JOURNAL = {Bull. Austral. Math. Soc.},
	FJOURNAL = {Bulletin of the Australian Mathematical Society},
	VOLUME = {9},
	YEAR = {1973},
	PAGES = {69--71},
	ISSN = {0004-9727},
	MRCLASS = {16A26},
	MRNUMBER = {325670},
	MRREVIEWER = {B.\ J.\ M\"uller},
	DOI = {10.1017/S000497270004288X},
	URL = {https://doi.org/10.1017/S000497270004288X},
}

@article {Cohen1974,
	AUTHOR = {Cohen, Joel M.},
	TITLE = {Zero divisors in group rings},
	JOURNAL = {Comm. Algebra},
	FJOURNAL = {Communications in Algebra},
	VOLUME = {2},
	YEAR = {1974},
	PAGES = {1--14},
	ISSN = {0092-7872,1532-4125},
	MRCLASS = {16A26},
	MRNUMBER = {344280},
	MRREVIEWER = {Donald\ S.\ Passman},
	DOI = {10.1080/00927877408822001},
	URL = {https://doi.org/10.1080/00927877408822001},
}

@article {FarkasSnider1976,
    AUTHOR = {Farkas, Daniel R. and Snider, Robert L.},
     TITLE = {{$K\sb{0}$} and {N}oetherian group rings},
   JOURNAL = {J. Algebra},
  FJOURNAL = {Journal of Algebra},
    VOLUME = {42},
      YEAR = {1976},
    NUMBER = {1},
     PAGES = {192--198},
      ISSN = {0021-8693},
   MRCLASS = {16A26},
  MRNUMBER = {422327},
       DOI = {10.1016/0021-8693(76)90036-3},
       URL = {https://doi.org/10.1016/0021-8693(76)90036-3},
}

@article {Brown1976,
    AUTHOR = {Brown, Kenneth A.},
     TITLE = {On zero divisors in group rings},
   JOURNAL = {Bull. London Math. Soc.},
  FJOURNAL = {The Bulletin of the London Mathematical Society},
    VOLUME = {8},
      YEAR = {1976},
    NUMBER = {3},
     PAGES = {251--256},
      ISSN = {0024-6093,1469-2120},
   MRCLASS = {16A26},
  MRNUMBER = {414616},
MRREVIEWER = {E.\ Formanek},
       DOI = {10.1112/blms/8.3.251},
       URL = {https://doi.org/10.1112/blms/8.3.251},
}

@book {Passman1977,
	AUTHOR = {Passman, Donald S.},
	TITLE = {The algebraic structure of group rings},
	SERIES = {Pure and Applied Mathematics},
	PUBLISHER = {Wiley-Interscience [John Wiley \& Sons], New
	York-London-Sydney},
	YEAR = {1977},
	PAGES = {xiv+720},
	ISBN = {0-471-02272-1},
	MRCLASS = {16-02 (16A27 20C07)},
	MRNUMBER = {470211},
	MRREVIEWER = {Uno\ Kaljulaid},
}

@article {RipsSegev1987,
	AUTHOR = {Rips, Eliyahu and Segev, Yoav},
	TITLE = {Torsion-free group without unique product property},
	JOURNAL = {J. Algebra},
	FJOURNAL = {Journal of Algebra},
	VOLUME = {108},
	YEAR = {1987},
	NUMBER = {1},
	PAGES = {116--126},
	ISSN = {0021-8693},
	MRCLASS = {20F06 (20C07)},
	MRNUMBER = {887195},
	MRREVIEWER = {Stephen\ J.\ Pride},
	DOI = {10.1016/0021-8693(87)90125-6},
	URL = {https://doi.org/10.1016/0021-8693(87)90125-6},
}

@article {Steenbock2015,
	AUTHOR = {Steenbock, Markus},
	TITLE = {Rips-{S}egev torsion-free groups without the unique product
	property},
	JOURNAL = {J. Algebra},
	FJOURNAL = {Journal of Algebra},
	VOLUME = {438},
	YEAR = {2015},
	PAGES = {337--378},
	ISSN = {0021-8693,1090-266X},
	MRCLASS = {20F06 (20F60 20F67 20P05)},
	MRNUMBER = {3353035},
	MRREVIEWER = {Arye\ Juh\'asz},
	DOI = {10.1016/j.jalgebra.2015.05.004},
	URL = {https://doi.org/10.1016/j.jalgebra.2015.05.004},
}

@article {GruberMartinSteenbock2015,
	AUTHOR = {Gruber, D. and Martin, A. and Steenbock, M.},
	TITLE = {Finite index subgroups without unique product in graphical
	small cancellation groups},
	JOURNAL = {Bull. Lond. Math. Soc.},
	FJOURNAL = {Bulletin of the London Mathematical Society},
	VOLUME = {47},
	YEAR = {2015},
	NUMBER = {4},
	PAGES = {631--638},
	ISSN = {0024-6093,1469-2120},
	MRCLASS = {20F06 (20F67)},
	MRNUMBER = {3375930},
	MRREVIEWER = {R\'emi\ Bernard\ Coulon},
	DOI = {10.1112/blms/bdv040},
	URL = {https://doi.org/10.1112/blms/bdv040},
}

@article {ArzhantsevaSteenbock2023,
	AUTHOR = {Arzhantseva, Goulnara and Steenbock, Markus},
	TITLE = {Rips construction without unique product},
	JOURNAL = {Pacific J. Math.},
	FJOURNAL = {Pacific Journal of Mathematics},
	VOLUME = {322},
	YEAR = {2023},
	NUMBER = {1},
	PAGES = {1--9},
	ISSN = {0030-8730,1945-5844},
	MRCLASS = {20F06 (20F67)},
	MRNUMBER = {4582951},
	MRREVIEWER = {Mohammad\ Shahryari},
	DOI = {10.2140/pjm.2023.322.1},
	URL = {https://doi.org/10.2140/pjm.2023.322.1},
}

@inproceedings{Soelberg2018,
	title={Finding Torsion-free Groups Which Do Not Have the Unique Product Property},
	author={Lindsay Jennae Soelberg},
	year={2018},
	url={https://api.semanticscholar.org/CorpusID:73664305}
}

@article {Promislow1988,
	AUTHOR = {Promislow, S. David},
	TITLE = {A simple example of a torsion-free, non unique product group},
	JOURNAL = {Bull. London Math. Soc.},
	FJOURNAL = {The Bulletin of the London Mathematical Society},
	VOLUME = {20},
	YEAR = {1988},
	NUMBER = {4},
	PAGES = {302--304},
	ISSN = {0024-6093,1469-2120},
	MRCLASS = {20F05 (20C07)},
	MRNUMBER = {940281},
	MRREVIEWER = {Stephen\ J.\ Pride},
	DOI = {10.1112/blms/20.4.302},
	URL = {https://doi.org/10.1112/blms/20.4.302},
}

@article {CravenPappas2013,
    AUTHOR = {Craven, David A. and Pappas, Peter},
     TITLE = {On the unit conjecture for supersoluble group algebras},
   JOURNAL = {J. Algebra},
  FJOURNAL = {Journal of Algebra},
    VOLUME = {394},
      YEAR = {2013},
     PAGES = {310--356},
      ISSN = {0021-8693,1090-266X},
   MRCLASS = {16S34 (16U60)},
  MRNUMBER = {3092724},
MRREVIEWER = {Zhengxing\ Li},
       DOI = {10.1016/j.jalgebra.2013.07.014},
       URL = {https://doi.org/10.1016/j.jalgebra.2013.07.014},
}

@article {Gardam2021,
	AUTHOR = {Gardam, Giles},
	TITLE = {A counterexample to the unit conjecture for group rings},
	JOURNAL = {Ann. of Math. (2)},
	FJOURNAL = {Annals of Mathematics. Second Series},
	VOLUME = {194},
	YEAR = {2021},
	NUMBER = {3},
	PAGES = {967--979},
	ISSN = {0003-486X,1939-8980},
	MRCLASS = {20C07},
	MRNUMBER = {4334981},
	MRREVIEWER = {E.\ Formanek},
	DOI = {10.4007/annals.2021.194.3.9},
	URL = {https://doi.org/10.4007/annals.2021.194.3.9},
}

@ARTICLE{Murray2021,
	author = {{Murray}, Alan G.},
	title = "{More Counterexamples to the Unit Conjecture for Group Rings}",
	journal = {arXiv e-prints},
	keywords = {Mathematics - Rings and Algebras, Mathematics - Commutative Algebra, Mathematics - Group Theory, 20C07 (Primary) 16S34, 16U60 (Secondary)},
	year = 2021,
	month = jun,
	eid = {arXiv:2106.02147},
	pages = {arXiv:2106.02147},
	doi = {10.48550/arXiv.2106.02147},
	archivePrefix = {arXiv},
	eprint = {2106.02147},
	primaryClass = {math.RA},
	adsurl = {https://ui.adsabs.harvard.edu/abs/2021arXiv210602147M}
}

@article {Schweitzer2013,
    AUTHOR = {Schweitzer, Pascal},
     TITLE = {On zero divisors with small support in group rings of
              torsion-free groups},
   JOURNAL = {J. Group Theory},
  FJOURNAL = {Journal of Group Theory},
    VOLUME = {16},
      YEAR = {2013},
    NUMBER = {5},
     PAGES = {667--693},
      ISSN = {1433-5883,1435-4446},
   MRCLASS = {16S34 (16U99 20C07)},
  MRNUMBER = {3101006},
MRREVIEWER = {Anatolii\ V.\ Tushev},
       DOI = {10.1515/jgt-2013-0017},
       URL = {https://doi.org/10.1515/jgt-2013-0017},
}

@article {Carter2014,
    AUTHOR = {Carter, William},
     TITLE = {New examples of torsion-free non-unique product groups},
   JOURNAL = {J. Group Theory},
  FJOURNAL = {Journal of Group Theory},
    VOLUME = {17},
      YEAR = {2014},
    NUMBER = {3},
     PAGES = {445--464},
      ISSN = {1433-5883,1435-4446},
   MRCLASS = {20F99},
  MRNUMBER = {3200369},
MRREVIEWER = {E.\ Formanek},
       DOI = {10.1515/jgt-2013-0051},
       URL = {https://doi.org/10.1515/jgt-2013-0051},
}

@article{BondarenkoJuschenko2024,
  author    = {Ievgen Bondarenko and Kate Juschenko},
  title     = {The zero divisor conjecture and Mealy automata},
  journal   = {arXiv preprint},
  volume    = {arXiv:2402.08625},
  year      = {2024},
  url       = {https://doi.org/10.48550/arXiv.2402.08625}
}

@article {AbdollahiTaheri2018,
    AUTHOR = {Abdollahi, Alireza and Taheri, Zahra},
     TITLE = {Zero divisors and units with small supports in group algebras
              of torsion-free groups},
   JOURNAL = {Comm. Algebra},
  FJOURNAL = {Communications in Algebra},
    VOLUME = {46},
      YEAR = {2018},
    NUMBER = {2},
     PAGES = {887--925},
      ISSN = {0092-7872,1532-4125},
   MRCLASS = {20C07 (16S34)},
  MRNUMBER = {3764905},
MRREVIEWER = {Todor\ Zheljazkov\ Mollov},
       DOI = {10.1080/00927872.2017.1344688},
       URL = {https://doi.org/10.1080/00927872.2017.1344688},
}

@article {AbdollahiTaheri2019,
    AUTHOR = {Abdollahi, Alireza and Jafari, Fatemeh},
     TITLE = {Zero divisor and unit elements with supports of size 4 in
              group algebras of torsion-free groups},
   JOURNAL = {Comm. Algebra},
  FJOURNAL = {Communications in Algebra},
    VOLUME = {47},
      YEAR = {2019},
    NUMBER = {1},
     PAGES = {424--449},
      ISSN = {0092-7872,1532-4125},
   MRCLASS = {20C07 (16S34)},
  MRNUMBER = {3924790},
MRREVIEWER = {Peter\ V.\ Danchev},
       DOI = {10.1080/00927872.2018.1477949},
       URL = {https://doi.org/10.1080/00927872.2018.1477949},
}

@misc{Mineyev2024,
	author = {Mineyev, Igor},
	title = {The Topology and Geometry of Units and Zero-Divisors: Origami},
	year = {2024},
	howpublished = {\url{https://mineyev.web.illinois.edu/art/top-geom-uzd-origami.pdf}},
}

@misc{garg-github,
	author = {Manisha Garg},
	title = {Kaplansky's Conjectures Repository},
	year = {2024},
	howpublished = {\url{https://github.com/manishagarg/Kaplanskys_Conjectures/tree/main/3_dimensional_cliques_actual_middle_link}}
}

@article {DykemaHeisterJuschenko15,
    AUTHOR = {Dykema, Ken and Heister, Timo and Juschenko, Kate},
     TITLE = {Finitely presented groups related to {K}aplansky's direct
              finiteness conjecture},
   JOURNAL = {Exp. Math.},
  FJOURNAL = {Experimental Mathematics},
    VOLUME = {24},
      YEAR = {2015},
    NUMBER = {3},
     PAGES = {326--338},
      ISSN = {1058-6458,1944-950X},
   MRCLASS = {16S34 (20C07)},
  MRNUMBER = {3359220},
MRREVIEWER = {Donald\ S.\ Passman},
       DOI = {10.1080/10586458.2014.993051},
       URL = {https://doi.org/10.1080/10586458.2014.993051},
}
\vspace{-0.2cm}

\end{document}